\setlist{topsep=3pt,partopsep=0pt,itemsep=1pt,parsep=0pt}
\newtheorem{Theorem}{Theorem}[section]
\newtheorem{Proposition}{Proposition}
\newtheorem{Remark}{Remark}[section]
\newtheorem{Lemma}{Lemma}[section]
\newtheorem{Construction}{Construction}[section]
\newcommand{\G}{\mathcal{G}}
\newcommand{\Z}{\mathbb{Z}}
\newcommand{\B}{\mathcal{B}}
\newcommand{\F}{\mathcal{F}}
\def \dev {{\rm dev}}
\def \leq {\leqslant}
\def \geq {\geqslant}
\def \mod#1{{\:({\rm mod}\ #1)}}
\let\oldproofname=\proofname
\renewcommand{\proofname}{\rm\bf{\oldproofname}}
\newcommand{\Rmnum}[1]{\expandafter\@slowromancap\romannumeral #1@}
\begin{document}

\title{Cyclic relative difference families with block size four and their applications}

\author[a]{Chenya Zhao}
\author[a]{Binwei Zhao}
\author[a]{Yanxun Chang \thanks{Supported by NSFC under Grant 11971053}}
\author[a]{Tao Feng \thanks{Supported by NSFC under Grant 12271023}}
\author[b]{Xiaomiao Wang \thanks{Supported by NSFC under Grant $11771227$}}
\author[a]{Menglong Zhang}
\affil[a]{School of Mathematics and Statistics, Beijing Jiaotong University, Beijing 100044, P.R. China}
\affil[b]{School of Mathematics and Statistics, Ningbo University, Ningbo 315211, P.R. China}
\renewcommand*{\Affilfont}{\small\it}
\renewcommand\Authands{ and }

\affil[ ]{cyazhao@bjtu.edu.cn; bwzhao@bjtu.edu.cn; yxchang@bjtu.edu.cn, tfeng@bjtu.edu.cn, wangxiaomiao@nbu.edu.cn, mlzhang@bjtu.edu.cn}
\date{}

\maketitle

\begin{abstract}
Given a subgroup $H$ of a group $(G,+)$, a $(G,H,k,1)$ difference family (DF) is a set $\F$ of $k$-subsets of $G$ such that $\{f-f':f,f'\in F, f\neq f',F\in \F\}=G\setminus H$. Let $g\Z_{gh}$ is the subgroup of order $h$ in $\mathbb Z_{gh}$ generated by $g$. A $(\Z_{gh},g\Z_{gh},k,1)$-DF is called cyclic and written as a $(gh,h,k,1)$-CDF. This paper shows that for $h\in\{2,3,6\}$, there exists a $(gh,h,4,1)$-CDF if and only if $gh\equiv h\pmod{12}$, $g\geq 4$ and $(g,h)\not\in\{(9,3),(5,6)\}$. As a corollary, it is shown that a 1-rotational S$(2,4,v)$ exists if and only if $v\equiv4\pmod{12}$ and $v\neq 28$. This solves the long-standing open problem on the existence of a 1-rotational S$(2,4,v)$. As another corollary, we establish the existence of an optimal $(v,4,1)$-optical orthogonal code with $\lfloor(v-1)/12\rfloor$ codewords for any positive integer $v\equiv 1,2,3,4,6\pmod{12}$ and $v\neq 25$. We also give applications of our results to cyclic group divisible designs with block size four and optimal cyclic $3$-ary constant-weight codes with weight four and minimum distance six.
\end{abstract}

\noindent {\bf Keywords}: difference family; cyclic group divisible design; 1-rotational Steiner system; cyclic constant-weight code; optical orthogonal code

\section{Introduction}

Throughout this paper, every union of sets will be understood as multiset union with multiplicities of elements preserved. Denote by $\Z_v$ the cyclic group of order $v$.

Let $H$ be a subgroup of a finite group $(G,+)$ and let $k$ be an integer satisfying $2\leq k\leq |G|$. A $(G,H,k,1)$-{\em relative difference family} (briefly DF) is a set $\F$ of $k$-subsets of $G$ (called {\em base blocks}) such that the list
$$\Delta \F:=\bigcup_{F\in \F}\Delta F:=\bigcup_{F\in \F}\{f-f':f,f'\in F, f\neq f'\}$$
covers each element of $G\setminus H$ exactly once and each element of $H$ zero time. There are $(|G|-|H|)/(k(k-1))$ base blocks in a $(G,H,k,1)$-DF and hence, a necessary condition for the existence of such a family is $|G|\equiv |H|\pmod{k(k-1)}$. Another obvious necessary condition is $|G|\geq k|H|$; otherwise, given any base block $F\in \F$, there exist two elements of $F$ that lie in a same right coset of $H$ in $G$, a contradiction.

\begin{Lemma}\label{lem:nece-DF}
If there exists a $(G,H,k,1)$-DF, then $|G|\equiv |H|\pmod{k(k-1)}$ and $|G|\geq k|H|$.
\end{Lemma}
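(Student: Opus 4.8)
The plan is to extract both conditions directly from the defining property of a $(G,H,k,1)$-DF, by a counting argument and a pigeonhole argument respectively --- essentially the reasoning already foreshadowed in the paragraph preceding the statement. For the congruence, I would fix such a family $\F$ and count the list $\Delta\F$ in two ways. Each base block $F\in\F$ is a $k$-subset, so $\Delta F$ contains exactly $k(k-1)$ (ordered) differences counted with multiplicity; summing over $F\in\F$ gives $|\F|\,k(k-1)$ differences in $\Delta\F$. On the other hand, by definition $\Delta\F$ covers every element of $G\setminus H$ exactly once and no element of $H$ at all, so $\Delta\F$ has exactly $|G|-|H|$ entries. Equating the two counts yields $|\F|=(|G|-|H|)/(k(k-1))$, and since $|\F|$ is a nonnegative integer this forces $k(k-1)\mid|G|-|H|$, i.e.\ $|G|\equiv|H|\pmod{k(k-1)}$.

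For the inequality, I would argue by contradiction, assuming $|G|<k|H|$; equivalently the index $[G:H]=|G|/|H|$ is smaller than $k$. Take any base block $F\in\F$. Its $k$ elements are distributed among fewer than $k$ cosets of $H$ in $G$, so by pigeonhole two distinct elements $f,f'\in F$ lie in a common coset of $H$. Then $f-f'$ is a nonidentity element of $H$ that occurs in $\Delta F\subseteq\Delta\F$, contradicting the requirement that $\Delta\F$ cover each element of $H$ zero times. Hence $|G|\geq k|H|$.

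I do not anticipate a genuine obstacle in this argument. The only points demanding a moment's attention are bookkeeping ones: in the pigeonhole step one should work with cosets on whichever side makes ``same coset'' imply ``difference in $H$'' (the index $[G:H]$ being the same on both sides), and one should note that $f\neq f'$ is precisely what guarantees the offending difference is a true nonidentity element of $H$ rather than $0$, which is what produces the contradiction.
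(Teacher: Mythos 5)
Your proposal is correct and matches the paper's own reasoning: the paper establishes the congruence by the same count $|\F|=(|G|-|H|)/(k(k-1))$ and the inequality by the same pigeonhole argument that a block with $|G|<k|H|$ would have two elements in a common coset of $H$, giving a forbidden difference in $H$. Nothing further is needed.
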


When $H$ consists only of the identity $1_G$ of $G$, a $(G,1_G,k,1)$-DF is often written as a $(G,k,1)$-DF. When $G$ is a cyclic group, a $(G,H,k,1)$-DF is said to be \emph{cyclic} and written as a  $(|G|,|H|,k,1)$-CDF.

Difference families arose from the study of designs with a given group of automorphisms. We
here only briefly recall what has been done for cyclic difference families with base block size three or four. The reader is referred to \cite{ab,bjl} for more topics on difference families.

As far back as 1939, Peltesohn \cite{Peltesohn} showed that for $h\in\{1,3\}$, a $(gh,h,3,1)$-CDF exists if and only if $gh\equiv h \pmod{6}$ and $(g,h)\neq (3,3)$. This gives a solution to the Heffter's difference problems \cite{Heffter1896,Heffter1897} that yield the existence of cyclic Steiner triple systems (see Chapter VII.4 in \cite{bjl}). For general $h$, Wang and Chang \cite[Theorem 3.11]{wc} in 2009 proved that a $(gh,h,3,1)$-CDF exists if and only if (1) $gh\equiv h \pmod{6}$ and $g\geq 4$; (2) $g\not\equiv 2,3 \pmod{4}$ when $h\equiv 2 \pmod{4}$. This implies the existence of cyclic group divisible designs with block size three without short orbits (see \cite[Theorem 5.4]{wc}).

When $p\equiv 1\pmod{12}$ is a prime, Bose \cite{Bose} provided a sufficient condition for the existence of a $(p,4,1)$-CDF admitting a multiplier of order 3, and the necessary and sufficient condition for this special kind of CDFs was established by Buratti in \cite{Buratti95}. Improving Buratti's work in \cite{Buratti95-1}, Chen and Zhu \cite{chen45} showed that a $(p,4,1)$-CDF exists for any prime $p\equiv 1\pmod{12}$. Even though a $(v,4,1)$-CDF of composite order $v=v_1v_2$ can be obtained by means of recursive constructions in the hypothesis that a $(v_i,4,1)$-CDF exists (cf. \cite{Buratti98}) or by direct constructions with nice algebraic properties (cf. \cite{bp2}), no infinite family of $(v,4,1)$-CDFs such that $v$ can run over a congruent class was known until quite recently. Zhang, Feng and Wang \cite{Zhang1} recently developed an efficient direct construction method to establish the existence of $(gh,h,4,1)$-CDFs for $h\in\{1,4\}$, which solves the long-standing open problem on the existence of cyclic Steiner systems with block size four.

\begin{Theorem}\label{thm:cyclic design k=4} {\rm \cite{Zhang1}}
For $h\in\{1,4\}$, there exists a $(gh,h,4,1)$-CDF if and only if $gh\equiv h\pmod{12}$ and $(g,h)\not\in\{(25,1),(4,4),(7,4)\}$.
\end{Theorem}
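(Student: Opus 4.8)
\medskip\noindent
I would treat the two directions separately. \emph{Necessity} is almost entirely Lemma~\ref{lem:nece-DF}: with $|G|=gh$, $|H|=h$, $k=4$, a $(gh,h,4,1)$-CDF forces $gh\equiv h\pmod{12}$ and $gh\geq 4h$ (hence $g\geq 4$; for $h=1$ this together with the congruence already forces $g\geq 13$). What remains is to rule out the three listed parameters. For $(4,4)$, a single base block $\{0,a,b,c\}$ would need its twelve differences to tile $\Z_{16}\setminus 4\Z_{16}$; sorting the cases by how many of $a,b,c$ are odd, each case forces at least six even differences and hence a repetition, so no $(16,4,4,1)$-CDF exists. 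For $(7,4)$ I would run the analogous (longer) finite case analysis, or an exhaustive search over inequivalent base blocks; and the non-existence of a $(25,4,1)$-CDF --- equivalently of a cyclic S$(2,4,25)$ --- is classical and also drops out of a short counting argument on the differences that lie in $5\Z_{25}$.

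\emph{Sufficiency} is the real work, and I would assemble it from three interlocking ingredients. First, since Chen and Zhu already provide a $(p,4,1)$-CDF for every prime $p\equiv 1\pmod{12}$, a \emph{direct algebraic construction} is needed mainly for composite orders and for the relative ($h=4$) case: when $v=gh$ has a suitable large prime factor $p$, realize the $g$ base blocks as the orbit of a short, carefully chosen list of starter blocks under multiplication by the index-$g$ subgroup of the pertinent group of units (a cyclotomic / radical-difference-family construction in the spirit of Bose, Buratti and Chen--Zhu). The verification that the resulting $12g$ differences tile $\Z_v\setminus g\Z_v$ without repetition then reduces to the non-vanishing of a bounded number of character sums, which the Weil bound secures once $p$ exceeds an explicit threshold. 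Second, \emph{recursive constructions}: propagate these seeds by standard product / ``filling in groups'' recursions --- combining a known $(gh,h,4,1)$-CDF with a group divisible design of block size four, or with a difference matrix, of compatible type --- to reach the remaining orders in the required congruence classes. Third, a \emph{finite list of leftover small parameters}, disposed of by exhibiting explicit base blocks found by computer.

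The step I expect to be the main obstacle is this direct construction together with making its analysis effective: one must arrange the starter blocks so that the associated difference multiset \emph{provably} tiles $\Z_v\setminus g\Z_v$, and convert the Weil estimate into an explicit inequality so that only finitely many small orders are left unresolved. Once that is in hand, the recursive bookkeeping and the remaining small-order work --- both the two sporadic non-existence proofs and the handful of explicit small CDFs --- are finite, if laborious, verifications.
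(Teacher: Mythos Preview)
This theorem is quoted from \cite{Zhang1} and not re-proved here, but the remark closing the proof of Lemma~\ref{lem-RCDF-main} describes the method: writing $gh=72t+12x+h$, one exhibits $6t-18$ base blocks of the uniform parametric shape $F_{r,i}=\{0,\ \alpha_r t+a_r+i,\ \beta_r t+b_r+2i,\ \gamma_r t+c_r+3i\}$ (six families indexed by $r$), together with $18+x$ further blocks found by computer, and checks directly that the resulting differences tile $\Z_{gh}\setminus g\Z_{gh}$ for every $t\ge 3$. The construction is purely additive in $\Z_{gh}$ and makes no use whatsoever of the multiplicative or arithmetic structure of $gh$.

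Your sufficiency plan has a genuine gap. Cyclotomic constructions and Weil bounds live in a finite field, so they yield difference families over $\Z_p$ only for prime $p$; for a prime power $q=p^a$ with $a\ge 2$ one obtains designs over the elementary abelian group $(\mathbb{F}_q,+)$, not over the cyclic group $\Z_q$, and for general composite $v$ the unit group of $\Z_v$ is neither cyclic nor large enough to drive the orbit argument you sketch. The product and difference-matrix recursions then reach only those $v\equiv 1\pmod{12}$ that factor into pieces each already carrying a $(\cdot,4,1)$-CDF --- ultimately, products of primes $\equiv 1\pmod{12}$. But infinitely many $v\equiv 1\pmod{12}$ have no prime factor $\equiv 1\pmod{12}$ at all (for instance $49$, $85$, $121$, $133$, $205$, and $5^{2k}$ for every $k\ge 2$), and your scheme offers no route to these beyond ad~hoc computer search, of which you would need infinitely many. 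This was precisely the barrier that stood before \cite{Zhang1}; the decisive new idea there --- a single explicit parametric family of base blocks valid uniformly in $t$, independent of the factorisation of $gh$ --- is exactly what your proposal is missing.
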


This paper is a continuation of the study of \cite{Zhang1}. As the main result of this paper, we are to prove the following theorem.

\begin{Theorem}\label{thm:main}
For $h\in\{2,3,6\}$, there exists a $(gh,h,4,1)$-CDF if and only if $gh\equiv h\pmod{12}$, $g\geq 4$ and $(g,h)\not\in\{(9,3),(5,6)\}$.
\end{Theorem}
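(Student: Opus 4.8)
\noindent\emph{Proof proposal.}
The plan is to dispatch necessity at once and then to build the difference families from three ingredients: a Weil-type direct construction, recursive constructions seeded by Theorem~\ref{thm:cyclic design k=4}, and a finite explicit search.

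\emph{Necessity.} Applying Lemma~\ref{lem:nece-DF} with $k=4$, $G=\Z_{gh}$ and $H=g\Z_{gh}$ already gives $gh\equiv h\pmod{12}$ and $g\ge 4$, so on the negative side only $(g,h)=(9,3)$ and $(g,h)=(5,6)$ remain to be excluded. In either case a hypothetical CDF would have exactly two base blocks, and the four elements of each must be pairwise incongruent modulo $g$ (so that the $\binom{4}{2}$ intra-block differences avoid $g\Z_{gh}$); I would rule both pairs out by a short exhaustive analysis of the two base blocks and their signed difference lists, showing these $24$ differences cannot tile $\Z_{27}\setminus\langle 9\rangle$, resp.\ $\Z_{30}\setminus\langle 5\rangle$. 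The pair $(9,3)$ is just the known nonexistence of a $1$-rotational $\mathrm S(2,4,28)$.

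\emph{Sufficiency.} I would treat $h\in\{2,3,6\}$ in parallel, the admissible $g$ being $g\equiv1\pmod 6$, $g\equiv1\pmod4$, and $g$ odd respectively. The central ingredient is a \emph{direct construction} along the lines of \cite{Zhang1}: choosing a prime divisor $p$ of $gh$, one uses the epimorphism $\Z_{gh}\to\Z_{gh/p}$ to lift a fixed combinatorial template on $\Z_{gh/p}$ (e.g.\ a smaller relative difference family) to a candidate family of base blocks in $\Z_{gh}$ whose $\mathbb F_p$-coordinates are free parameters; the requirement that $\Delta\F$ cover $\Z_{gh}\setminus\langle g\rangle$ exactly once then reduces, coset by coset, to a bounded system of conditions asserting the nonvanishing of certain polynomials (and of Vandermonde-type determinants) in those parameters, and Weil's bound on character sums forces a simultaneous solution once $p$ exceeds an explicit constant $p_0(h)$. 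The second ingredient is \emph{recursion}: a $(g_1h,h,4,1)$-CDF is nothing but a cyclic $4$-GDD of type $h^{g_1}$ (with $\Z_{g_1h}$ acting and all block-orbits of full length), and such a design can be inflated by a $(g_2,4,1)$-CDF, or combined with a $(4g_2,4,4,1)$-CDF supplied by Theorem~\ref{thm:cyclic design k=4}, under suitable coprimality to yield a $(g_1g_2h,h,4,1)$-CDF; since $(g_2,4,1)$-CDFs exist for all $g_2\equiv1\pmod{12}$ with $g_2\ne25$, this turns a general admissible $g$ into a bounded set of seed values. The third ingredient is a short table of \emph{explicit base blocks} found by computer for the seeds the first two methods miss --- the genuinely small cases and the values just above the exceptional pairs, such as $g=13$ for $h=3$ and $g=7$ for $h=6$.

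\emph{Assembly and main obstacle.} For each $h$, the direct construction settles every $g$ with a prime factor exceeding $p_0(h)$; the recursive constructions, together with an elementary inspection of how the admissible $g$'s factor (using, e.g., that $q^2\equiv1\pmod{12}$ for every prime $q>3$), reduce the remaining $g$'s to finitely many seeds; and those seeds are handled by the direct construction when they carry a large prime factor and by the table otherwise. I expect the hard part to be the direct construction: picking a template and a parametrization broad enough to sweep a whole residue class of $g$ yet rigid enough that the covering condition stays algebraic, and, above all, making the Weil estimate effective with $p_0(h)$ small enough that the residual seed set is tractable. Closely tied to this is the treatment of admissible $g$ with no large prime factor (prime powers, products of a few primes coprime to $6$), where the field-based argument does not apply and tailored recursions or individual constructions are required; pinning down a clean finite seed set and verifying that $(9,3)$ and $(5,6)$ are the only obstructions is the last piece of bookkeeping.
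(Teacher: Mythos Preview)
Your plan differs fundamentally from the paper's, and there is a concrete gap. The paper uses no Weil-type or character-sum argument whatsoever. Its Lemma~\ref{lem-RCDF-main} supplies a single explicit parametric family that works for \emph{every} admissible $gh\ge 218$, regardless of the factorisation of $g$: writing $gh=72t+12x+h$ with $t\ge 3$ and $0\le x\le 5$, one takes $6t-18$ base blocks of the shape $\{0,\ \alpha t+c_1+i,\ \beta t+c_2+2i,\ \gamma t+c_3+3i\}$ for $1\le i\le t-2$, $i\ne\lfloor t/2\rfloor$, with $(\alpha,\beta,\gamma)$ running over six fixed triples and the constants $c_j$ depending only on $(h,x)$ and tabulated once and for all, together with $18+x$ further blocks (depending only on $h$, $x$ and the parity of $t$) found by computer. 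One then checks by direct bookkeeping that the resulting difference list is exactly $\Z_{gh}\setminus g\Z_{gh}$. No prime factor of $g$ is ever singled out. The finitely many orders $gh<218$ are dispatched by known constructions and short explicit lists (Lemmas~\ref{lem:h=2-small}--\ref{lem:h=6-small}).

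Your Weil step, by contrast, only reaches $g$ with a prime factor above some $p_0(h)$, and the recursion you sketch does not reduce the smooth $g$'s to a finite seed set. Take $h=3$ and $g=9^{k}$, $k\ge 2$: these are admissible, have $3$ as their only prime factor, and the only factorisations $9^{k}=nm$ with $n,m\equiv 1\pmod 4$ have $n,m\in\{1,9,81,\dots,9^k\}$. The non-trivial choices force you to use either a $(27,3,4,1)$-CDF (when $n=9$) or a $(9,4,1)$-CDM (when $m=9$), and \emph{both fail to exist}: the first is exactly the exception $(g,h)=(9,3)$, the second is the sole exception in Lemma~\ref{cdm}. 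So neither of your two engines touches $g=81$ or $g=729$ for $h=3$, and the missing $(9,4,1)$-CDM likewise blocks $g=81$ for $h=6$. You could repair this by seeding those particular values and then arguing inductively, but you would still owe a proof that the full set of smooth seeds (across all prime signatures compatible with the congruence on $g$) is finite and tractable; this is not ``bookkeeping'' but a real structural obstacle, and it is precisely what the paper's uniform parametric construction is designed to bypass.
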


Section \ref{sec-2-3-6} gives a proof of Theorem \ref{thm:main}. In Section \ref{sec-recursions}, using a recursive construction, we construct more new infinite families of cyclic difference families (see Theorem \ref{thm:RDF-general}). Section \ref{sec-3} is devoted to providing applications of cyclic difference families to cyclic group divisible designs (see Theorem \ref{thm:CGDD-general}), 1-rotational Steiner systems (see Theorem \ref{thm:1-rotational}), optimal cyclic constant-weight codes (see Theorem \ref{thm:cyclic code}) and optimal optical orthogonal codes (see Theorem \ref{thm:OOC}). Concluding remarks are given in Section \ref{conclusion}.

\section{Existence of $(gh,h,4,1)$-cyclic difference families}

\subsection{The cases $h=2,3$ and $6$}\label{sec-2-3-6}

For positive integers $a,b$ and $c$ such that $a\leq b$ and $a\equiv b\pmod{c}$, we set $[a,b]_c:=\{a+ci:0\leq i\leq (b-a)/c\}$. When $c=1$, $[a,b]_1$ is simply written as $[a,b]$.

\begin{Lemma}\label{lem-RCDF-main}
For $h\in\{2,3,6\}$, there exists a $(gh,h,4,1)$-CDF for any $gh\equiv h\pmod{12}$ and $gh\geq 218$.
\end{Lemma}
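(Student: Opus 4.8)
The plan is to exhibit, for each $h\in\{2,3,6\}$ and each $g$ meeting the hypotheses, an explicit family of base blocks and then verify the two defining properties of a $(gh,h,4,1)$-CDF by a direct bookkeeping of differences. First I would record the arithmetic: $gh\equiv h\pmod{12}$ is equivalent to $h(g-1)\equiv 0\pmod{12}$, so the hypothesis forces $g\equiv 1\pmod{6}$ when $h=2$, $g\equiv 1\pmod{4}$ when $h=3$, and $g$ odd when $h=6$, while $gh\geq 218$ forces $g\geq 109,\,73,\,37$ respectively. Since translating a base block leaves its list of differences unchanged, I may normalise every base block to contain $0$, so the task becomes to build $h(g-1)/12$ quadruples $\{0,x_i,y_i,z_i\}$ such that the $h(g-1)$ signed differences $\pm x_i,\pm y_i,\pm z_i,\pm(y_i-x_i),\pm(z_i-x_i),\pm(z_i-y_i)$, with $i$ ranging over the blocks, reduced modulo $gh$, run over $\Z_{gh}\setminus g\Z_{gh}$ --- which also has $gh-h=h(g-1)$ elements --- with each value attained exactly once. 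Equivalently one checks (i) no such difference is a multiple of $g$ and (ii) the \emph{positive} differences together realise each non-multiple of $g$ in $[1,\lfloor gh/2\rfloor]$ exactly once; note that when $gh$ is even, that is $h\in\{2,6\}$, the midpoint $gh/2$ equals $g$ or $3g$, which already lies in $g\Z_{gh}$, so no self-paired difference can fall outside the subgroup and there is no parity obstruction.

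For the construction itself I would slice $\Z_{gh}\setminus g\Z_{gh}$ into the $h$ \emph{layers} $L_q=\{qg+1,\dots,qg+g-1\}$ for $0\leq q\leq h-1$ (equivalently the $g-1$ nonzero \emph{columns} $\{r,g+r,\dots,(h-1)g+r\}$, $1\leq r\leq g-1$), and generate most of the base blocks from a single generic template indexed by a running parameter $i$, designed so that the three \emph{short} differences $x_i,y_i,z_i$ sweep part of the layers while the three \emph{long} differences $y_i-x_i,z_i-x_i,z_i-y_i$ sweep the rest, the coefficients of $g$ in $x_i,y_i,z_i$ being pinned precisely so that no difference is ever $\equiv 0\pmod{g}$. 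A number of \emph{exceptional} base blocks, bounded independently of $g$, then patches the low-index columns and whatever few elements the generic sweep leaves over; in all likelihood the template must be broken into a handful of sub-cases according to $g$ modulo $12$ (or $24$) to keep that exceptional list finite. One could instead observe that projecting $\Z_{gh}$ onto $\Z_{gh}/g\Z_{gh}\cong\Z_g$ sends any $(gh,h,4,1)$-CDF to a $(g,4,h)$-difference family, so the problem splits into producing such a family (immediate from $h$ copies of a $(g,4,1)$-CDF when $g\equiv 1\pmod{12}$, by Theorem~\ref{thm:cyclic design k=4}) and then lifting its base blocks along $\Z_h\to\Z_{gh}\to\Z_g$ so that the \emph{vertical} differences come out right; but since $g\not\equiv 1\pmod{12}$ must be treated directly in any case, I would build a self-contained template for each $h$.

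With a template fixed, the verification is finite: the twelve signed differences of a generic block are affine-linear in $g$ and in $i$ with bounded coefficients, so sorting them by the layer into which they fall turns (ii) into a system of identities in $g$ (matching of endpoints and step sizes of a few arithmetic progressions) that either holds for all sufficiently large admissible $g$ or fails on a small residue class, and (i) holds by design --- the coefficients are chosen so that each difference is a bounded nonzero residue modulo $g$ --- subject only to a finite check on the coefficient patterns of the generic and exceptional blocks. The genuinely hard part is the design stage, not the checking: one must make the short and long differences tile every nonzero column simultaneously, with no collisions and never landing on a multiple of $g$, while keeping the number of exceptional blocks bounded uniformly in $g$. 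This balancing act is exactly why the statement carries the hypothesis $gh\geq 218$ --- it leaves the running parameter enough room that the generic sweep cannot overrun the exceptional blocks --- and the finitely many smaller admissible values of $gh$, together with the genuine exceptions $(g,h)=(9,3)$ and $(5,6)$ (both having $gh<218$), are to be disposed of separately, presumably by computer search in a companion lemma.
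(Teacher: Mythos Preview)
Your high-level plan coincides with the paper's strategy: normalise each base block to contain $0$, build the bulk of the family from linear templates in a running index, add a bounded list of exceptional blocks, and verify by tabulating positive differences against $[1,\lfloor gh/2\rfloor]\setminus g\Z$. Your arithmetic preliminaries are correct, including the remark that for $h\in\{2,6\}$ the midpoint $gh/2$ already lies in the forbidden subgroup.

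That said, what you have written is a programme, not a proof, and for this lemma the construction \emph{is} the proof. Two structural points you should correct before attempting to carry it out. First, a single template will not do the bookkeeping: one block yields only six positive differences, so to tile a half-interval of length $\approx 36t$ with APs of length $\approx t$ one needs six templates running simultaneously. The paper writes $gh=72t+12x+h$ with $t\geq 3$, $0\leq x\leq 5$, and takes six families $F_{r,i}=\{0,\ \alpha_r t+a_r+i,\ \beta_r t+b_r+2i,\ \gamma_r t+c_r+3i\}$ for fixed $(\alpha_r,\beta_r,\gamma_r)$ and $i\in[1,t-2]\setminus\{\lfloor t/2\rfloor\}$, giving $6t-18$ generic blocks whose $36$ arithmetic progressions of differences interlock to cover almost all of $[1,36t]$. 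Second, the case split is not by $g\bmod 12$: the eighteen constants $a_r,b_r,c_r$ depend on $(h,x)$, and the $18+x$ exceptional blocks depend additionally on the parity of $t$, so there are $3\times 6\times 2=36$ tables to produce. The authors report that choosing the constants by hand took about half a day per $(h,x)$ and the exceptional blocks one to three days of machine search per case; your proposal gives no mechanism for either step. The alternative route you sketch via projection to a $(g,4,h)$-DF is sound but does not bypass the difficulty, since the lifting along $\Z_h$ still requires exactly this kind of coefficient-tuning to make the ``vertical'' differences come out right.
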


\begin{proof}
Let $gh=72t+12x+h$, where $t\geq 3$ and $0\leq x\leq 5$. Then a $(gh,h,4,1)$-CDF contains $6t+x$ base blocks. We give a direct construction for such a CDF here. All base blocks are divided into two parts. The first part consists of $6t-18$ base blocks:
\begin{center}
\begin{tabular}{llll}
$F_{1,i}=\{0$, & $43t+a_1+i$, & $31t+a_2+2i$, & $8t+a_3+3i\}$,\\
$F_{2,i}=\{0$, & $23t+b_1+i$, & $5t+b_2+2i$, & $8t+b_3+3i\}$,\\
$F_{3,i}=\{0$, & $41t+c_1+i$, & $25t+c_2+2i$, & $8t+c_3+3i\}$,\\
$F_{4,i}=\{0$, & $35t+d_1+i$, & $5t+d_2+2i$, & $d_3+3i\}$,\\
$F_{5,i}=\{0$, & $47t+e_1+i$, & $19t+e_2+2i$, & $e_3+3i\}$,\\
$F_{6,i}=\{0$, & $21t+f_1+i$, & $13t+f_2+2i$, & $f_3+3i\}$,\\
\end{tabular}
\end{center}
where $1\leq i\leq t-2$ and $i\neq \lfloor t/2\rfloor$, and $a_j,b_j,c_j,d_j,e_j,f_j$ are shown in Table \ref{tab:coffi}. The remaining $18+x$ base blocks are listed in Appendix \ref{Sec:A} according to the parity of $t$. Write $\F_r=\{F_{r,i}: 1\leq i\leq{t-2}, i\neq \lfloor t/2\rfloor\}$ for $1\leq r\leq6$, and denote by $\F^{'}$ the set consisting of the remaining $18+x$ base blocks. It remains to check that $\F=(\bigcup_{r=1}^6{\F_r})\cup{\F^{'}}$ forms a $(gh,h,4,1)$-CDF.
\begin{table}
\begin{tabular}{|l|l|lll|lll|lll|lll|lll|lll|}\hline
$h$ & $x$ & $a_1$ & $a_2$ & $a_3$ & $b_1$ & $b_2$ & $b_3$ & $c_1$ & $c_2$ & $c_3$ & $d_1$ & $d_2$ & $d_3$ & $e_1$ & $e_2$ & $e_3$ & $f_1$ & $f_2$ & $f_3$\\\hline
& $0$ & $1$ & $1$ & $2$ & $1$ & $0$ & $1$ & $0$ & $1$ & $0$ & $1$ & $1$ & $1$ & $2$ & $2$ & $2$ & $1$ & $1$ & $0$ \\
& $1$ & $8$  & $7$  & $5$  & $4$ & $3$  & $4$ & $7$  & $4$  & $3$  & $7$  & $2$ & $1$ & $9$  & $5$  & $2$ & $4$  & $1$  & $0$ \\
2 & $2$ & $16$ & $14$ & $8$  & $10$ & $4$ & $7$ & $15$ & $10$ & $6$  & $12$ & $3$ & $1$ & $17$ & $9$  & $2$ & $6$  & $4$  & $0$ \\
& $3$ & $25$ & $20$ & $10$ & $14$ & $5$ & $8$ & $24$ & $17$ & $9$  & $17$ & $4$ & $1$ & $24$ & $11$ & $2$ & $12$ & $7$  & $0$ \\
& $4$ & $31$ & $24$ & $10$ & $18$ & $7$ & $8$ & $30$ & $19$ & $9$  & $23$ & $6$ & $1$ & $32$ & $13$ & $2$ & $16$ & $11$ & $0$ \\
& $5$ & $38$ & $30$ & $11$ & $21$ & $6$ & $9$ & $37$ & $25$ & $10$ & $29$ & $5$ & $1$ & $40$ & $17$ & $2$ & $18$ & $13$ & $0$ \\\hline

& $0$ & $2$  & $2$ & $2$ & $2$ & $1$ & $1$ & $1$ & $1$ & $0$ & $2$ & $2$ & $1$ & $3$ & $3$ & $2$ & $2$ & $2$ & $0$ \\
& $1$ & $10$ & $7$ & $5$ & $6$ & $3$ & $4$ & $9$ & $7$ & $3$ & $7$ & $2$ & $1$ & $11$ & $5$ & $2$ & $4$ & $3$ & $0$ \\
$3$ & $2$ & $18$ & $14$ & $8$ & $10$ & $4$ & $7$ & $17$ & $12$ & $6$ & $12$ & $3$ & $1$ & $18$ & $9$ & $2$ & $8$ & $6$ & $0$ \\
& $3$ & $26$ & $20$ & $10$ & $14$ & $5$ & $8$ & $26$ & $17$ & $9$ & $18$ & $4$ & $1$ & $25$ & $11$ & $2$ & $10$ & $7$ & $0$ \\
& $4$ & $32$ & $24$ & $10$ & $18$ & $5$ & $8$ & $30$ & $19$ & $9$ & $24$ & $4$ & $1$ & $35$ & $17$ & $2$ & $16$ & $11$ & $0$ \\
& $5$ & $41$ & $31$ & $11$ & $21$ & $6$ & $9$ & $38$ & $24$ & $10$ & $30$ & $5$ & $1$ & $42$ & $18$ & $2$ & $17$ & $13$ & $0$ \\\hline

& $0$ & $4$ & $3$ & $2$ & $1$ & $0$ & $1$ & $2$ & $1$ & $0$ & $2$ & $1$ & $1$ & $4$ & $2$ & $2$ & $1$ & $1$ & $0$\\
& 				$1$ & $12$ & $9$  & $5$  & $6$ & $3$  & $4$ & $9$  & $6$  & $3$  & $7$  & $2$ & $1$ & $13$ & $5$  & $2$ & $4$  & $3$  & $0$ \\
$6$ & 				$2$ & $19$ & $14$ & $8$  & $10$ & $4$ & $7$ & $17$ & $12$ & $6$  & $15$ & $3$ & $1$ & $19$ & $9$  & $2$ & $8$  & $4$  & $0$ \\
& 				$3$ & $26$ & $20$ & $10$ & $14$ & $5$ & $8$ & $26$ & $18$ & $9$  & $20$ & $4$ & $1$ & $27$ & $11$ & $2$ & $10$ & $7$  & $0$ \\
& 				$4$ & $32$ & $24$ & $10$ & $18$ & $5$ & $8$ & $30$ & $20$ & $9$  & $24$ & $4$ & $1$ & $35$ & $17$ & $2$ & $16$ & $11$ & $0$ \\
& 				$5$ & $41$ & $31$ & $11$ & $21$ & $6$ & $9$ & $38$ & $25$ & $10$ & $30$ & $5$ & $1$ & $42$ & $18$ & $2$ & $17$ & $13$ & $0$ \\\hline
 \end{tabular} \caption{Coefficients in the first $6t-18$ base blocks in Lemma \ref{lem-RCDF-main}}
\label{tab:coffi}
\end{table}

We take the case $h=2$, $x=0$ and $t\equiv 1\pmod{2}$ for example to show that $\F$ forms a $(2g,2,4,1)$-CDF. For convenience, for any $F\in \F$, if $x,y\in F$ and $x>y$, we call $x-y\pmod{gh}$ a {\em positive} difference from $F$, and $y-x\pmod{gh}$ a {\em negative} difference from $F$.  The collection of all positive differences (resp. negative differences) in $\Delta F$ is denoted by $\Delta^+ F$ (resp. $\Delta^- F$). Write $\Delta^+\F_r=\bigcup_{F\in\F_r} \Delta^+ F$ and $\Delta^-\F_r=\bigcup_{F\in\F_r} \Delta^- F$ for $1\leq r\leq6$. We list all positive differences from $\F_r$ with $1\leq r\leq6$ in Table \ref{tab0}, and if a positive difference is greater than $36t+1$, then we list its corresponding negative difference. We also list all differences less than $36t+1$ from the remaining 18 base blocks, written as $B_l$ with $1\leq l\leq 18$, in Table \ref{tab00}. It is readily checked that $\Delta \F=\Z_{72t+2}\setminus\{0,36t+1\}$, and hence $\F$ is a $(2g,2,4,1)$-CDF.

\begin{table}[t]\renewcommand\arraystretch{1.3}\resizebox{\linewidth}{!}{
\begin{tabular}{|c|c|c||c|c|}\hline
$\Delta^+ F_{1,i}$ & $\Delta^+{\cal F}_1$ & $\Delta^-{\cal F}_1$ & $\Delta^+ F_{2,i}$ & $\Delta^+{\cal F}_2$\\\hline
			$43t+1+i$ & $[43t+2,44t-1]\backslash\{\frac{87t+1}{2}\}$ & $[28t+3,29t]\backslash\{\frac{57t+3}{2}\}$ & $23t+1+i$ & $[23t+2,24t-1]\backslash\{\frac{47t+1}{2}\}$     \\
			$31t+1+2i$ & $[31t+3,33t-3]_{2}\backslash\{32t\}$ & & $5t+2i$ & $[5t+2,7t-4]_{2}\backslash\{6t-1\}$  \\
			$8t+2+3i$ & $[8t+5,11t-4]_{3}\backslash\{\frac{19t+1}{2}\}$ & & $8t+1+3i$ & $[8t+4,11t-5]_{3}\backslash\{\frac{19t-1}{2}\}$  \\
			$12t-i$ & $[11t+2,12t-1]\backslash\{\frac{23t+1}{2}\}$ & & $18t+1-i$ & $[17t+3,18t]\backslash\{\frac{35t+3}{2}\}$  \\
			$35t-1-2i$ & $[33t+3,35t-3]_{2}\backslash\{34t\}$ & & $15t-2i$ & $[13t+4,15t-2]_{2}\backslash\{14t+1\}$  \\
			$23t-1-i$ & $[22t+1,23t-2]\backslash\{\frac{45t-1}{2}\}$ & & $3t+1+i$ & $[3t+2,4t-1]\backslash\{\frac{7t+1}{2}\}$  \\\hline
			$\Delta F^+_{3,i}$ & $\Delta{\cal F}^+_3$ & $\Delta{\cal F}^-_3$ & $\Delta F^+_{4,i}$ & $\Delta{\cal F}^+_4$     \\\hline
			$41t+i$ & $[41t+1,42t-2]\backslash\{\frac{83t-1}{2}\}$ & $[30t+4,31t+1]\backslash\{\frac{61t+5}{2}\}$ & $35t+1+i$ & $[35t+2,36t-1]\backslash\{\frac{71t+1}{2}\}$     \\
			$25t+1+2i$ & $[25t+3,27t-3]_{2}\backslash\{26t\}$ & & $5t+1+2i$ & $[5t+3,7t-3]_{2}\backslash\{6t\}$  \\
			$8t+3i$ & $[8t+3,11t-6]_{3}\backslash\{\frac{19t-3}{2}\}$ & & $1+3i$ & $[4,3t-5]_{3}\backslash\{\frac{3t-1}{2}\}$  \\
			$16t-1-i$ & $[15t+1,16t-2]\backslash\{\frac{31t-1}{2}\}$ & & $30t-i$ & $[29t+2,30t-1]\backslash\{\frac{59t+1}{2}\}$  \\
			$33t-2i$ & $[31t+4,33t-2]_{2}\backslash\{32t+1\}$ & & $35t-2i$ & $[33t+4,35t-2]_{2}\backslash\{34t+1\}$  \\
			$17t+1-i$ & $[16t+3,17t]\backslash\{\frac{33t+3}{2}\}$ & & $5t-i$ & $[4t+2,5t-1]\backslash\{\frac{9t+1}{2}\}$  \\\hline
			$\Delta F^+_{5,i}$ & $\Delta{\cal F}^+_5$ & $\Delta{\cal F}^-_5$ & $\Delta F^+_{6,i}$ & $\Delta{\cal F}^+_6$     \\\hline
			$47t+2+i$ & $[47t+3,48t]\backslash\{\frac{95t+3}{2}\}$ & $[24t+2,25t-1]\backslash\{\frac{49t+1}{2}\}$ & $21t+1+i$ & $[21t+2,22t-1]\backslash\{\frac{43t+1}{2}\}$     \\
			$19t+2+2i$ & $[19t+4,21t-2]_{2}\backslash\{20t+1\}$ & & $13t+1+2i$ & $[13t+3,15t-3]_{2}\backslash\{14t\}$  \\
			$2+3i$ & $[5,3t-4]_{3}\backslash\{\frac{3t+1}{2}\}$ & & $3i$ & $[3,3t-6]_{3}\backslash\{\frac{3t-3}{2}\}$  \\
			$28t-i$ & $[27t+2,28t-1]\backslash\{\frac{55t+1}{2}\}$ & & $8t-i$ & $[7t+2,8t-1]\backslash\{\frac{15t+1}{2}\}$  \\
			$47t-2i$ & $[45t+4,47t-2]_{2}\backslash\{46t+1\}$ & $[25t+4,27t-2]_2\backslash\{26t+1\}$ & $21t+1-2i$ & $[19t+5,21t-1]_{2}\backslash\{20t+2\}$  \\
			$19t-i$ & $[18t+2,19t-1]\backslash\{\frac{37t+1}{2}\}$ & & $13t+1-i$ & $[12t+3,13t]\backslash\{\frac{25t+3}{2}\}$  \\\hline
\end{tabular}
}
\caption{Differences from the first $6t-18$ base blocks in Lemma \ref{lem-RCDF-main} when $h=2$, $x=0$ and $t$ is odd} \label{tab0}
\end{table}

\begin{table}[b]\renewcommand\arraystretch{1.15}\resizebox{\linewidth}{!}{
\begin{tabular}{|c|c|c|}\hline
 	$j$ & differences less than $36t+1$ from $B_{2j-1}$ & differences less than $36t+1$ from $B_{2j}$ \\\hline
 	$1$ &
 	$\{1,11t-3,11t-2,30t+2,30t+3,31t+2\}$ &
 	$\{2,3t-3,3t-1,8t+2,11t-1,11t+1\}$
 	 \\[0.25ex]\hline
 	$2$ & $\{\frac{3t-3}{2},23t-1,28t+2,\frac{43t+1}{2},\frac{59t+1}{2},21t+1\}$ &
 	$\{3t,33t+1,12t,30t+1,15t,27t+1\}$\\[0.25ex]\hline
 	$3$ &
 	$\{3t+1,7t+1,19t+3,4t,16t+2,12t+2\}$ &
 	$\{\frac{9t+1}{2},8t+1,\frac{3t-1}{2},\frac{7t+1}{2},6t,\frac{19t+1}{2}\}$\\[0.25ex]\hline
 	$4$ &
 	$\{5t,21t,28t,16t,7t,23t\}$ &
 	$\{5t+1,\frac{55t+1}{2},\frac{15t+1}{2},\frac{45t-1}{2},\frac{25t+3}{2},35t+1\}$\\[0.25ex]\hline
 	$5$ &
 	$\{6t-1,19t+1,35t,13t+2,16t-1,29t+1\}$ &
 	$\{7t-2,35t-1,20t+2,28t+1,17t+1,27t\}$\\[0.25ex]\hline
 	$6$ &
 	$\{8t,33t,14t,22t,25t,25t+2\}$ &
 	$\{\frac{19t-1}{2},\frac{61t+5}{2},19t+2,\frac{57t+3}{2},32t,\frac{23t+1}{2}\}$\\[0.25ex]\hline
 	$7$ &
 	$\{11t,34t+1,14t+1,23t+1,24t,25t+1\}$ &
 	$\{12t+1,15t-1,33t+2,3t-2,27t-1,24t+1\}$\\[0.25ex]\hline
 	$8$ &
 	$\{\frac{31t-1}{2},\frac{71t+1}{2},\frac{37t+1}{2},20t+1,34t,18t+1\}$&
 	$\{16t+1,\frac{35t+3}{2},30t,\frac{3t+1}{2},\frac{49t+1}{2},26t+1\}$\\[0.25ex]\hline
 	$9$ &
 	$\{\frac{33t+3}{2},26t,7t-1,\frac{47t+1}{2},\frac{19t-3}{2},33t-1\}$&
 	$\{19t,32t+1,36t,13t+1,4t+1,17t+2\}$\\[0.25ex]\hline
\end{tabular}
}
\caption{Differences less than $36t+1$ from the remaining $18$ base blocks in Lemma \ref{lem-RCDF-main} when $h=2$, $x=0$ and $t$ is odd} \label{tab00}
\end{table}

One can examine all the other cases similarly. To facilitate the reader to check the correctness of our results, we provide a computer code written by GAP \cite{GAP4}. The reader can get a copy of the computer code from \cite{Checking}.

We remark that the structure of the first $6t-18$ base blocks in this proof is exactly the same as the structure of the first $6t-18$ base blocks in the proof for the existence of $(gh,h,4,1)$-CDFs with $h\in\{1,4\}$ in \cite[Lemmas 4, 5 and Theorem 5]{Zhang1}. We just need to choose appropriate parameters $a_j,b_j,c_j,d_j,e_j,f_j$ by hand and then find the remaining $18+x$ base blocks by computer search such that $\F$ forms our required difference families. It often took us around a half day to choose an appropriate set of $a_j,b_j,c_j,d_j,e_j,f_j$ for each given $h$ and $x$, and then the time used to search for the remaining $18+x$ base blocks by an ordinary personal computer was ranged from one day to three days.
\end{proof}

The following lemma will be used to show the existence of $(gh,h,4,1)$-CDFs with $h\in\{2,3\}$ and small $g$'s later. We outline its proof for completeness.

\begin{Lemma}\label{lem:classic}
\begin{enumerate}
\item[$(1)$] {\rm \cite[Lemma 6.10]{Brouwer}} There exists a $(2p,2,4,1)$-CDF for any prime $p\equiv1\pmod{6}$.
\item[$(2)$] {\rm \cite{Moore}} There exists a $(3p,3,4,1)$-CDF for any prime $p\equiv1\pmod{4}$.
\end{enumerate}
\end{Lemma}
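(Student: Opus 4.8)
The statement to prove is Lemma \ref{lem:classic}, which records two classical constructions of cyclic relative difference families via cyclotomy. Here is how I would approach each part.

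\textbf{Part (1): a $(2p,2,4,1)$-CDF for primes $p\equiv 1\pmod 6$.} Write $p=6f+1$ and work in the ring $\Z_{2p}\cong\Z_2\times\Z_p$ via the Chinese Remainder Theorem. Let $C_0$ denote the group of nonzero sixth powers in $\Z_p^*$, a subgroup of index $6$, and let $\theta$ be a primitive root mod $p$, so $C_j=\theta^j C_0$ for $0\le j\le 5$ are the six cyclotomic classes. The plan is to exhibit $f$ base blocks, one "starter" block $B$ whose cyclotomic translates $\theta^{6s}B$ (equivalently, whose $C_0$-multiples) for $0\le s\le f-1$ generate the whole family. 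A natural candidate for the starter block, going back to Bose-type constructions, is $B=\{(0,0),(1,0),(a,x),(a',x')\}$ where the three nonzero $\Z_p$-coordinate differences hit a transversal of the cyclotomic classes under multiplication by $C_0$; one checks that the list $\Delta B$, expanded over the multiplier action of $C_0$, covers each element of $(\Z_2\times\Z_p)\setminus(\Z_2\times\{0\})$ exactly once. The key computation is a cyclotomic-number argument: the differences arising from pairs involving the two "$x$" coordinates land in prescribed cyclotomic classes, and because $p\equiv 1\pmod 6$ one has $-1\in C_0$ (as $f$ may be even) or one uses the signed differences carefully; the arithmetic reduces to solving a small system over $\Z_p^*$ that is guaranteed solvable for all but finitely many $p$ and can be pinned down exactly by the Weil bound / a direct check for small $p$. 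I would cite \cite[Lemma 6.10]{Brouwer} for the precise form of the starter and the verification, rather than reproduce the cyclotomic-number tables.

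\textbf{Part (2): a $(3p,3,4,1)$-CDF for primes $p\equiv 1\pmod 4$.} This is Moore's classical construction \cite{Moore}. Write $p=4f+1$ and identify $\Z_{3p}\cong\Z_3\times\Z_p$. Let $Q$ be the subgroup of nonzero fourth powers in $\Z_p^*$ (index $4$); we want $f$ base blocks forming a transversal-type system under the multiplier $Q$. The starter block has the shape $B=\{(0,0),(1,u),(2,v),(0,w)\}$ (or a variant) chosen so that the six positive differences of second coordinates, together with their interaction with the $\Z_3$-coordinates, represent each coset of $Q$ in $\Z_p^*$ exactly the right number of times while every difference with nonzero $\Z_3$-coordinate is covered once. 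Again the verification is a cyclotomic-number identity: because $p\equiv 1\pmod 4$, we have $-1\in Q$, which forces the signed differences to be symmetric and makes the counting clean; one then checks that a certain pair of elements can be chosen in distinct prescribed quartic classes, which holds for all primes $p\equiv 1\pmod 4$ (small cases by inspection, large cases by the quartic-character sum estimate). I would present this as a citation to \cite{Moore}, with at most a one-line indication of the starter block and the cyclotomy principle.

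\textbf{Main obstacle.} Since the excerpt says the authors only "outline" the proof, the expository task is to state the starter blocks and the multiplier group precisely and then reduce the verification to a cyclotomic-number computation, flagging that the computation is entirely routine (indeed automatable). The genuine mathematical content that needs care is confirming there are \emph{no} sporadic exceptions: one must argue that the small system of congruences defining a valid starter block is solvable for \emph{every} prime in the stated congruence class, not just asymptotically. For Part (1) the relevant range of small $p$ ($p=7,13,19,\dots$) and for Part (2) ($p=5,13,17,\dots$) should be checked directly, and for large $p$ a character-sum lower bound (Weil) guarantees a solution; I would note that both \cite{Brouwer} and \cite{Moore} already carry out this exception analysis, so here it suffices to quote them. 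Thus the proof is: invoke CRT to pass to $\Z_3\times\Z_p$ (resp. $\Z_2\times\Z_p$), specify the cyclotomically-structured starter, observe $-1$ lies in the multiplier group, and cite the classical references for the difference-count verification and the absence of exceptions.
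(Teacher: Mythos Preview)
Your high-level framework---identify $\Z_{2p}\cong\Z_2\times\Z_p$ (resp.\ $\Z_{3p}\cong\Z_3\times\Z_p$) via CRT and build the family as the orbit of one starter block under a multiplicative subgroup of $\Z_p^*$---is exactly what the paper does. But the execution in the paper is far simpler and fully explicit, and your sketch both overcomplicates the argument and contains errors.

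The paper writes down the starter blocks using roots of unity. Let $\omega$ be a primitive root mod $p$. For (1), with $\varepsilon=\omega^{(p-1)/6}$, the blocks are
\[
\{(0,0),(1,\omega^i),(1,\varepsilon^2\omega^i),(1,\varepsilon^4\omega^i)\},\qquad 0\le i<(p-1)/6;
\]
for (2), with $\varepsilon=\omega^{(p-1)/4}$, they are
\[
\{(0,\omega^i),(0,\varepsilon^2\omega^i),(1,\varepsilon\omega^i),(1,\varepsilon^3\omega^i)\},\qquad 0\le i<(p-1)/4.
\]
The difference check is then a two-line direct computation (using $1+\varepsilon^2+\varepsilon^4=0$ in (1) and $\varepsilon^2=-1$ in (2)): the second coordinates of the differences from a single block already form a union of cosets of $\langle\varepsilon\rangle$, so letting $i$ run gives all of $\Z_p^*$ in each $\Z_2$- (resp.\ $\Z_3$-) layer. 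There are no cyclotomic-number tables, no Weil bounds, no ``small system of congruences'' to solve, and no sporadic primes to check by hand---the construction is uniform in $p$.

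Two concrete errors in your sketch: you claim $-1$ lies in the sixth powers when $p\equiv1\pmod 6$ and in the fourth powers when $p\equiv1\pmod 4$, but these hold only for $p\equiv1\pmod{12}$ and $p\equiv1\pmod 8$ respectively. You also propose a starter of the shape $\{(0,0),(1,0),(a,x),(a',x')\}$ for (1); having $(1,0)$ in the block would force the forbidden difference $(1,0)\in\Z_2\times\{0\}$ to appear. The paper's choice of starter sidesteps both issues automatically.
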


\begin{proof}
Let $\omega$ be a primitive root of unity in the finite field $\mathbb{F}_p$. Let $\varepsilon=\omega^{(p-1)/6}$ if $p\equiv1\pmod{6}$, and $\varepsilon=\omega^{(p-1)/4}$ if $p\equiv1\pmod{4}$. Then
$$\{(0,0),(1,\omega^i),(1,\varepsilon^2\cdot \omega^i),(1,\varepsilon^4\cdot \omega^i)\},\ \ \ \ 0\leq i<(p-1)/6,$$
forms a $(2p,2,4,1)$-CDF over $\Z_2\times \Z_p\cong \Z_{2p}$ for any prime $p\equiv1\pmod{6}$, and
$$\{(0,\omega^i),(0,\varepsilon^2\cdot \omega^i),(1,\varepsilon\cdot \omega^i),(1,\varepsilon^3\cdot \omega^i)\},\ \ \ \ 0\leq i<(p-1)/4,$$
forms a $(3p,3,4,1)$-CDF over $\Z_3\times \Z_p\cong \Z_{3p}$ for any prime $p\equiv1\pmod{4}$.
\end{proof}

Applying a standard recursive construction (see Construction \ref{recursion} below) together with the use of the above lemma, Yin \cite{Yin1} established the following results implicitly.

\begin{Lemma}\label{lem:h=2}
\begin{enumerate}
\item[$(1)$] {\rm \cite[Theorem 4.9]{Yin1}} There exists a $(2g,2,4,1)$-CDF for any positive integer $g$ such that each prime factor of $g$ is $1$ modulo $6$.
\item[$(2)$] {\rm \cite[Theorem 4.10]{Yin1}} There exists a $(3g,3,4,1)$-CDF for any positive integer $g$ such that each prime factor of $g$ is $1$ modulo $4$.
\end{enumerate}
\end{Lemma}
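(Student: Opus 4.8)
The final statement to prove is Lemma \ref{lem:h=2}, which asserts the existence of $(2g,2,4,1)$-CDFs when every prime factor of $g$ is $1\pmod 6$, and of $(3g,3,4,1)$-CDFs when every prime factor of $g$ is $1\pmod 4$. Since this is attributed to Yin and said to follow ``implicitly'' from a standard recursive construction plus Lemma \ref{lem:classic}, my plan is to make that implication explicit.

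First I would set up the recursive construction referenced as Construction \ref{recursion}: the standard product/composition theorem for relative difference families. Roughly, if one has a $(G,H,k,1)$-DF and, on each ``block pattern'' one can fill in with smaller difference families relative to appropriate subgroups, then one obtains a larger CDF. Concretely, the usual statement is: if there is a $(m_1 m_2, m_2, k, 1)$-CDF (a ``master'' design giving the structure on $\Z_{m_1 m_2}$ relative to the subgroup of order $m_2$) and a $(m_2 m_3, m_3, k, 1)$-CDF, then there is a $(m_1 m_2 m_3, m_3, k, 1)$-CDF; more precisely the tool needed here is that a $(n, h, 4, 1)$-CDF combined with a $(hg, h, 4, 1)$-CDF — where $g \mid n/h$ in the right sense — yields a $(ng/h \cdot \text{something})$-CDF. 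Rather than getting the bookkeeping exactly right from memory, I would invoke Construction \ref{recursion} as stated in the paper.

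Second, the inductive engine: I would argue by induction on the number of prime factors of $g$ (with multiplicity). For $h=2$: the base case $g=1$ is trivial (empty family, or $g=p$ prime $\equiv 1 \pmod 6$ is exactly Lemma \ref{lem:classic}(1)). For the inductive step, write $g = g' p$ with $p \equiv 1 \pmod 6$ prime and $g'$ having all prime factors $\equiv 1 \pmod 6$. By induction there is a $(2g', 2, 4, 1)$-CDF; by Lemma \ref{lem:classic}(1) there is a $(2p, 2, 4, 1)$-CDF. Feeding these two into Construction \ref{recursion} (with the subgroup of order $2$ playing the role of $H$, and $\Z_{2g'}$ expanding each point into a copy of $\Z_{2p}/\Z_2$-worth of new points) produces a $(2g'p, 2, 4, 1) = (2g, 2, 4, 1)$-CDF. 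The case $h=3$ is identical with $6$ replaced by $4$ throughout and Lemma \ref{lem:classic}(2) in place of part (1). I should double-check that the congruence conditions needed by Construction \ref{recursion} — typically that the ``ingredient'' moduli are compatible, and that the necessary conditions of Lemma \ref{lem:nece-DF} hold — are automatically satisfied: here $|G| = 2g$, $|H| = 2$, $k=4$, so we need $2g \equiv 2 \pmod{12}$, i.e.\ $g \equiv 1 \pmod 6$, which indeed follows since a product of primes each $\equiv 1 \pmod 6$ is $\equiv 1 \pmod 6$; and $2g \geq 8$ whenever $g \geq 4$, which holds once $g$ is a product of at least one prime $\equiv 1 \pmod 6$ (the smallest being $7$), the case $g=1$ being vacuous.

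The main obstacle, and the only place real care is needed, is verifying that Construction \ref{recursion} applies verbatim with the chosen ingredients — in particular that the subgroup structure matches: one needs the ``small'' CDF $(2p,2,4,1)$ to be relative to a subgroup of order $2$ that is identified correctly with the subgroup $2\Z_{2g'}$ of order $2$ in the ``large'' CDF when one takes the direct product $\Z_{2g'} \times \Z_p \cong \Z_{2gp/\gcd}$ — this is where the hypothesis ``each prime factor of $g$ is $1 \pmod 6$'' (rather than merely $g \equiv 1 \pmod 6$) is genuinely used, since it guarantees $\gcd(2g', p) = 1$ at every stage so the Chinese Remainder splitting $\Z_{2g'} \times \Z_p \cong \Z_{2g'p}$ is valid and the order-$2$ subgroups line up. Since the statement is explicitly credited to Yin \cite{Yin1} and we have already recorded Lemma \ref{lem:classic}, I would keep this proof to a short paragraph: state the induction, cite Construction \ref{recursion}, and note the coprimality point; everything else is routine.

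\begin{proof}
Both parts follow by induction on the number of prime factors of $g$ (counted with multiplicity), using Lemma \ref{lem:classic} for the base case and Construction \ref{recursion} for the inductive step. We spell out part $(1)$; part $(2)$ is entirely analogous, with $6$ replaced by $4$ and Lemma \ref{lem:classic}(2) used in place of Lemma \ref{lem:classic}(1).

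If $g=1$ the required family is empty, and if $g=p$ is prime with $p\equiv 1\pmod 6$ the existence is exactly Lemma \ref{lem:classic}(1). Now suppose $g=g'p$, where $p\equiv 1\pmod 6$ is prime and every prime factor of $g'$ is $1$ modulo $6$, and assume inductively that a $(2g',2,4,1)$-CDF exists. Since every prime factor of $g'$ is $1\pmod 6$, we have $\gcd(2g',p)=1$, so $\Z_{2g'}\times\Z_p\cong\Z_{2g'p}$ and under this isomorphism the subgroup $g'\Z_{2g'}\times\{0\}$ of order $2$ is carried to the subgroup of order $2$ in $\Z_{2g'p}$. Applying Construction \ref{recursion} to the $(2g',2,4,1)$-CDF (as master design) and the $(2p,2,4,1)$-CDF of Lemma \ref{lem:classic}(1) (as ingredient design, filled in on each coset of the order-$2$ subgroup) yields a $(2g'p,2,4,1)=(2g,2,4,1)$-CDF, completing the induction.
\end{proof}
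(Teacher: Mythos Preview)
Your overall strategy (induct on the number of prime factors, use Lemma \ref{lem:classic} as the base case, and use the recursive Construction \ref{recursion} for the step) is indeed the route the paper indicates. However, your execution of the inductive step has a real gap, because you have misread what Construction \ref{recursion} actually says.

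Construction \ref{recursion} does \emph{not} take two CDFs as input. Its inputs are a $(gn,g,k,1)$-CDF together with an $(h,k,1)$-\emph{CDM}; these produce a $(gnh,gh,k,1)$-CDF, and only then does a further $(gh,g,k,1)$-CDF let you pass to a $(gnh,g,k,1)$-CDF. In your inductive step (with $g=2$, $n=g'$, $h=p$, $k=4$) you correctly supply the $(2g',2,4,1)$-CDF and the $(2p,2,4,1)$-CDF, but you never supply the required $(p,4,1)$-CDM. That CDM is the engine of the construction; without it there is no way to cover the ``mixed'' differences, and the two CDFs alone do not compose. The fix is easy once noticed: Lemma \ref{cdm} gives a $(p,4,1)$-CDM for every odd $p\ge 5$ with $p\ne 9$, and any prime $p\equiv 1\pmod 6$ (respectively $p\equiv 1\pmod 4$) satisfies this.

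Relatedly, the step you flag as ``the only place real care is needed'' --- the claim that $\gcd(2g',p)=1$ so that CRT applies --- is both incorrect and unnecessary. It is incorrect because nothing prevents $p\mid g'$; for instance if $g=49=7^2$ then at some stage $g'=7$ and $p=7$, so $\gcd(2g',p)=7$. It is unnecessary because Construction \ref{recursion} works directly in $\Z_{2g'p}$ via the CDM and never invokes CRT; the construction handles repeated prime factors without difficulty. So you should drop the CRT discussion entirely and instead insert the one-line verification that the $(p,4,1)$-CDM exists.
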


\begin{Lemma}\label{lem:h=2-small}
There exists a $(2g,2,4,1)$-CDF for any integer $g\equiv 1\pmod{6}$ and $7\leq g\leq 103$.
\end{Lemma}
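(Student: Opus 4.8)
The plan is to settle the finitely many cases $g\in\{1,7,13,\dots,103\}$ with $g\equiv 1\pmod 6$ directly, treating separately those $g$ that are covered by the existing prime-power machinery and those that are not. First I would observe that Lemma~\ref{lem:classic}(1) already disposes of every prime $g\equiv 1\pmod 6$ in the range, namely $g\in\{7,13,19,31,37,43,61,67,73,79\}$ (since $2g=2p$ then), and Lemma~\ref{lem:h=2}(1) disposes of every composite $g$ in the range all of whose prime factors are $\equiv 1\pmod 6$, namely $g\in\{49,91\}$ (as $49=7^2$ and $91=7\cdot 13$). This leaves the values $g\in\{1,25,55,85\}$ together with $g\in\{85,\dots\}$---more precisely the composite $g\equiv 1\pmod 6$ in $[7,103]$ having a prime factor $\not\equiv 1\pmod 6$: these are $g=25=5^2$, $g=55=5\cdot 11$, $g=85=5\cdot 17$, and also $g=1$ is trivial (the empty family is a $(2,2,4,1)$-CDF vacuously). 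So the genuine work is for $g\in\{25,55,85\}$, i.e. for $2g\in\{50,110,170\}$.

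For these remaining orders I would exhibit explicit base blocks of a $(2g,2,4,1)$-CDF in $\Z_{2g}$, found by a short computer search (analogous to the searches described in the proof of Lemma~\ref{lem-RCDF-main}), and then verify by hand or by the GAP code of \cite{Checking} that the union of the difference lists equals $\Z_{2g}\setminus\{0,g\}$. Concretely, a $(2g,2,4,1)$-CDF has $(2g-2)/12=(g-1)/6$ base blocks: $4$ blocks for $g=25$, $9$ blocks for $g=55$, and $14$ blocks for $g=85$. Since each such system has only a handful of base blocks and the difference equation $\Delta\F=\Z_{2g}\setminus\{0,g\}$ is easily checked, listing one valid solution for each of the three values completes the proof. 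I would present the three lists in a small table, in the same style as Tables~\ref{tab0} and \ref{tab00}, and remark that the verification is routine and can be reproduced with the supplied code.

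The only mildly delicate point is making sure the case analysis in the first paragraph is exhaustive: one must enumerate all $g\equiv 1\pmod 6$ with $7\le g\le 103$ (there are exactly $17$ of them), classify each as prime, as a product of primes $\equiv 1\pmod 6$, or otherwise, and confirm that only $g\in\{25,55,85\}$ fall in the "otherwise" class. The hardest part---though it is really just bookkeeping rather than mathematics---is locating by computer the explicit base blocks for $2g=110$ and $2g=170$; but as these are very small parameters the search is essentially instantaneous, and no structural obstacle arises.
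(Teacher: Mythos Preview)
Your approach is essentially identical to the paper's: dispose of $g\in\{25,55,85\}$ by exhibiting explicit base blocks and cover every remaining $g$ via Lemma~\ref{lem:h=2}(1). One small bookkeeping slip: your list of primes $g\equiv1\pmod6$ in $[7,103]$ omits $97$ and $103$, so your tally $10+2+3$ accounts for only $15$ of the $17$ values; these two missing primes are of course handled by the same lemma you already invoke, and $g=1$ lies outside the stated range and need not be mentioned.
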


\begin{proof}
For $g\in\{25,55,85\}$, we list all the base blocks below.
\begin{center}
\begin{tabular}{llllll}
$g=25$: & $\{0,1,3,12\}$,&$\{0,4,18,26\}$,&$\{0,6,23,43\}$,&$\{0,10,15,31\}$. \\

$g=55$: & $\{0,1,4,50\}$,&$\{0,2,7,36\}$,&$\{0,6,15,100\}$,&$\{0,8,31,99\}$,&$\{0,13,37,90\}$,\\
&$\{0,14,54,72\}$,&$\{0,17,47,82\}$,&$\{0,26,48,69\}$,&$\{0,27,59,71\}$. \\

$g=85$: & $\{0,1,4,62\}$,&$\{0,2,7,22\}$,&$\{0,6,14,142\}$,&$\{0,9,21,72\}$,&$\{0,10,23,55\}$,\\
&$\{0,11,29,93\}$,&$\{0,16,52,91\}$,&$\{0,17,73,116\}$,&$\{0,19,84,122\}$,&$\{0,24,59,92\}$,\\
&$\{0,27,74,144\}$,&$\{0,31,80,120\}$,&
$\{0,44,69,110\}$,&$\{0,46,76,133\}$.
\end{tabular}
\end{center}
For all the other cases of $g$, apply Lemma \ref{lem:h=2}.
\end{proof}

\begin{Lemma}\label{lem:h=3-small}
There exists a $(3g,3,4,1)$-CDF for any integer $g\equiv 1\pmod{4}$, $5\leq g\leq 69$ and $g\neq 9$.
\end{Lemma}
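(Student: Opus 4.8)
The plan is to mirror the strategy of Lemma \ref{lem:h=2-small}: the ``generic'' values of $g$ in the range will be handled by the recursive/multiplicative machinery of Lemma \ref{lem:h=2}(2), and only the finitely many values of $g\equiv 1\pmod 4$ with $5\le g\le 69$, $g\ne 9$, that are \emph{not} covered by that lemma will be dispatched by hand via explicitly listed base blocks. First I would determine exactly which $g$ in $\{5,13,17,21,25,29,33,37,41,45,49,53,57,61,65,69\}$ have the property that every prime factor of $g$ is $\equiv 1\pmod 4$ (so that Lemma \ref{lem:h=2}(2) applies directly): these are $g\in\{5,13,17,25,29,37,41,53,61,65\}$. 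The remaining values, namely $g\in\{21,33,45,49,57,69\}$ (recall $g=9$ is excluded, consistent with the exception $(9,3)$ in Theorem \ref{thm:main}), are the cases requiring an ad hoc construction, and for each of these I would exhibit a list of $(3g-3)/12$ base blocks of size four in $\Z_{3g}$ whose differences cover $\Z_{3g}\setminus g\Z_{3g}$ exactly once.

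The main work, and the only real obstacle, is producing those explicit base-block lists for $g\in\{21,33,45,49,57,69\}$ (i.e.\ $3g\in\{63,99,135,147,171,207\}$). Each such list must (i) avoid all differences in the order-$3$ subgroup $g\Z_{3g}=\{0,g,2g\}$ and (ii) hit every other residue modulo $3g$ exactly once; a short computer search — of precisely the kind the authors already describe and whose GAP code they supply in \cite{Checking} — readily locates such families, and I would simply tabulate the resulting blocks, as was done for $g\in\{25,55,85\}$ in the proof of Lemma \ref{lem:h=2-small}. For a handful of these it may also be possible to invoke Lemma \ref{lem:classic}(2) or a small recursive step on a factorization $g=g_1g_2$, but since $21,33,57,69$ each have a prime factor $3\pmod 4$ and $45,49$ are not of the form $3p$, the self-contained route is to list the blocks directly.

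Finally I would assemble the proof in two sentences in the style of Lemma \ref{lem:h=2-small}: ``For $g\in\{21,33,45,49,57,69\}$ we list all the base blocks below,'' followed by the tabulated families, ``For all the other cases of $g$, apply Lemma \ref{lem:h=2}.'' Correctness of each listed family is a finite verification — for each block $F$ compute the multiset $\Delta F$ and check that $\bigcup_F \Delta F = \Z_{3g}\setminus\{0,g,2g\}$ — which can be confirmed by hand for the smallest cases and, uniformly, by the provided GAP code.
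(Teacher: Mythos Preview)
Your proposal is correct and matches the paper's proof essentially line for line: the paper handles exactly the six values $g\in\{21,33,45,49,57,69\}$ by explicitly listing the base blocks and covers all remaining $g$ by Lemma~\ref{lem:h=2}. Your identification of which $g$ in the range fail the ``every prime factor $\equiv 1\pmod 4$'' criterion is accurate, and the two-sentence write-up you describe is precisely how the paper presents it.
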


\begin{proof}
For $g\in\{21,33,45,49,57,69\}$, we list all the base blocks below.
\begin{center}\tabcolsep 0.07in
\begin{tabular}{llllll}
$g=21$: & $\{0,2,18,27\}$,&$\{0,3,10,15\}$,&$\{0,8,14,37\}$,&$\{0,11,30,31\}$,&$\{0,24,28,41\}$. \\

$g=33$: & $\{0,1,3,40\}$,&$\{0,4,12,86\}$,&$\{0,5,16,36\}$,&$\{0,6,44,70\}$,&$\{0,10,28,85\}$,\\
&$\{0,15,56,65\}$&$\{0,19,46,67\}$,&$\{0,22,45,52\}$. \\

$g=45$: & $\{0,1,5,38\}$,&$\{0,2,8,60\}$,&$\{0,3,14,21\}$,&$\{0,9,29,120\}$,&$\{0,10,59,84\}$,\\
&$\{0,12,39,107\}$,&$\{0,16,46,87\}$,&$\{0,17,73,109\}$,&$\{0,23,65,78\}$,&$\{0,32,63,85\}$,\\
&$\{0,35,54,101\}$.\\

$g=49$: & $\{0,1,3,43\}$,&$\{0,19,73,101\}$,&$\{0,6,14,38\}$,&$\{0,7,17,68\}$,&$\{0,35,69,102\}$,\\
&$\{0,4,9,22\}$,&$\{0,20,92,108\}$,&$\{0,21,57,87\}$,&$\{0,29,77,100\}$,&$\{0,31,58,122\}$,\\
&$\{0,11,26,63\}$,&$\{0,50,62,103\}$.\\

$g=57$: & $\{0,1,3,55\}$,&$\{0,44,81,108\}$,&$\{0,5,12,105\}$,&$\{0,8,17,153\}$,&$\{0,11,24,80\}$,\\
&$\{0,14,29,99\}$,&$\{0,16,50,83\}$,&$\{0,21,79,118\}$,&$\{0,25,48,68\}$,&$\{0,40,62,135\}$,\\
&$\{0,4,10,42\}$,&$\{0,46,87,106\}$,&$\{0,47,77,122\}$,&$\{0,51,82,110\}$. \\

$g=69$: & $\{0,1,5,124\}$,&$\{0,15,34,108\}$,&$\{0,35,110,156\}$,&$\{0,55,98,137\}$,&$\{0,11,24,193\}$,\\
&$\{0,2,8,66\}$,&$\{0,18,59,85\}$,&$\{0,45,105,135\}$,&$\{0,20,77,171\}$,&$\{0,28,89,139\}$,\\
&$\{0,3,12,170\}$,&$\{0,44,71,175\}$,&$\{0,22,100,142\}$,&$\{0,48,79,160\}$,&$\{0,52,73,153\}$,\\
&$\{0,7,17,191\}$,&$\{0,63,92,154\}$.
\end{tabular}
\end{center}
For all the other cases of $g$, apply Lemma \ref{lem:h=2}.
\end{proof}

The proof of Lemma 2.6 in \cite{KChen} implies that if there exists a skew starter in $\Z_g$, then there exists a $(6g,6,4,1)$-CDF. Since a skew starter in $\Z_g$ exists for any $g$ such that $\gcd(g,150)=1$ or $25$ by \cite[Theorem 1.2]{KChen}, and a skew starter in $\Z_{35}$ exists by \cite[Lemma 2.4]{KChen}, we have the following result, (1) of which was first stated explicitly in \cite[Lemma 2.6]{gy}.

\begin{Lemma}\label{lem:h=6} {\rm \cite{KChen}}
\begin{enumerate}
\item[$(1)$] There exists a $(6g,6,4,1)$-CDF for any positive integer $g$ such that $\gcd(g,150)=1$ or $25$.
\item[$(2)$] There exists a $(210,6,4,1)$-CDF.
\end{enumerate}
\end{Lemma}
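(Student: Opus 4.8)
The plan is to establish the stronger statement flagged just before the lemma — a skew starter in $\Z_g$ yields a $(6g,6,4,1)$-CDF — and then read off both parts from the known existence of skew starters. In all cases at issue $g$ is odd and coprime to $3$: for $(1)$ this is immediate from $\gcd(g,150)\in\{1,25\}$, and for $(2)$ we take $g=35$. Hence $6g\equiv 6\pmod{12}$ automatically, $\gcd(g,6)=1$, and via CRT $\Z_{6g}\cong\Z_6\times\Z_g$ with the order-$6$ subgroup $g\Z_{6g}$ corresponding to $\Z_6\times\{0\}$; so a $(6g,6,4,1)$-CDF is exactly a set of $4$-subsets of $\Z_6\times\Z_g$ whose differences cover $(\Z_6\times\Z_g)\setminus(\Z_6\times\{0\})$ once each, and there should be $(6g-6)/12=(g-1)/2$ of them — one per starter pair.

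Recall a \emph{skew starter} in $\Z_g$ ($g$ odd) is a family $\{\{a_i,b_i\}:1\leq i\leq(g-1)/2\}$ of unordered pairs with $\{a_i,b_i:i\}=\Z_g\setminus\{0\}$, with the multiset $\{\pm(a_i-b_i):i\}=\Z_g\setminus\{0\}$, and with the multiset $\{\pm(a_i+b_i):i\}=\Z_g\setminus\{0\}$ (so in particular every $a_i+b_i\neq 0$). Fixing any orientation of the pairs, I would take
$$B_i=\{(0,0),\ (0,\,a_i-b_i),\ (5,\,a_i),\ (2,\,-b_i)\}\subseteq\Z_6\times\Z_g,\qquad 1\leq i\leq (g-1)/2.$$
Since $a_i,b_i\neq0$ and $a_i+b_i\neq0$, the four $\Z_g$-coordinates $0,\,a_i-b_i,\,a_i,\,-b_i$ are pairwise distinct, so no two points of $B_i$ lie in a common coset of $g\Z_{6g}$ and $B_i$ is a genuine base block.

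The heart of the proof is a difference count. Computing $\Delta B_i$ in $\Z_6\times\Z_g$, its twelve differences organise by first coordinate as $\{0\}\times\{\pm(a_i-b_i)\}$, $\{1\}\times\{-a_i,-b_i\}$, $\{2\}\times\{-a_i,-b_i\}$, $\{3\}\times\{\pm(a_i+b_i)\}$, $\{4\}\times\{a_i,b_i\}$, $\{5\}\times\{a_i,b_i\}$. Summing over $i$: the first-coordinate-$0$ part becomes $\{0\}\times(\Z_g\setminus\{0\})$ with multiplicity one by the starter difference property; the first-coordinate-$3$ part becomes $\{3\}\times(\Z_g\setminus\{0\})$ with multiplicity one by the skew (sum) property; and, since $\{a_i,b_i:i\}$ — hence also $\{-a_i,-b_i:i\}$ — partitions $\Z_g\setminus\{0\}$, each of the first-coordinate-$1,2,4,5$ parts becomes $\{c\}\times(\Z_g\setminus\{0\})$ with multiplicity one. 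Therefore $\bigcup_i\Delta B_i=(\Z_6\times\Z_g)\setminus(\Z_6\times\{0\})$, covering every element exactly once, i.e. $\{B_i:1\leq i\leq(g-1)/2\}$ is a $(6g,6,4,1)$-CDF.

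To finish, $(1)$ follows from \cite[Theorem 1.2]{KChen}, which gives a skew starter in $\Z_g$ for every $g$ with $\gcd(g,150)\in\{1,25\}$; and $(2)$ follows because $\Z_{35}$ has a skew starter by \cite[Lemma 2.4]{KChen} and $210=6\cdot35$. The one nontrivial step is locating the block shape $B_i$ above and checking that the twelve-difference bookkeeping matches the three defining properties of a skew starter exactly; after that the argument is routine, so the real weight of the result rests in the cited skew-starter constructions rather than in this lemma.
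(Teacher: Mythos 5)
Your proposal is correct and follows the same route as the paper: reduce to the existence of skew starters in $\Z_g$ and quote \cite[Theorem 1.2]{KChen} for $\gcd(g,150)\in\{1,25\}$ and \cite[Lemma 2.4]{KChen} for $g=35$. The only difference is that the paper simply cites the proof of \cite[Lemma 2.6]{KChen} for the implication ``skew starter in $\Z_g$ $\Rightarrow$ $(6g,6,4,1)$-CDF,'' whereas you verify it explicitly; your block shape $\{(0,0),(0,a_i-b_i),(5,a_i),(2,-b_i)\}$ and the twelve-difference bookkeeping (fibers $0$ and $3$ handled by the difference and sum properties, fibers $1,2,4,5$ by the partition property of the starter, with no difference landing in $\Z_6\times\{0\}$ since $a_i,b_i,a_i\pm b_i\neq 0$) are checked and correct.
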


We remark that Buratti \cite[Theorem 4.1]{MBuratti2} provided a nice explicit construction for the existence of a $(6p,6,4,1)$-CDF for any odd prime $p\neq 5$ according to whether $p$ is or is not a Fermat prime.

\begin{Lemma}\label{lem:h=6-small}
There exists a $(6g,6,4,1)$-CDF for any odd integer $7\leq g\leq 35$.
\end{Lemma}

\begin{proof}
For $g\in\{9,15,27\}$, a $(6g,6,4,1)$-CDF exists by \cite[Lemma 4.2]{gy}. For $g\in\{21,33\}$, we list all the base blocks below.
\begin{center}
\begin{tabular}{llllll}
$g=21$: & $\{0,19,62,79\}$,&$\{0,2,8,78\}$,&$\{0,3,12,35\}$,&$\{0,13,31,65\}$,&$\{0,15,22,115\}$,\\
&$\{0,16,55,106\}$,&$\{0,1,5,46\}$,&$\{0,24,38,96\}$,&$\{0,25,53,82\}$,&$\{0,27,37,86\}$.\\

$g=33$: &$\{0,49,88,125\}$,&$\{0,2,9,74\}$,&$\{0,25,83,106\}$,&$\{0,17,35,69\}$,&$\{0,38,94,138\}$,\\
&$\{0,14,29,170\}$,&$\{0,4,12,82\}$,&$\{0,21,45,130\}$,&$\{0,3,13,179\}$,&$\{0,30,105,148\}$,\\
&$\{0,31,86,127\}$,&$\{0,1,6,54\}$,&$\{0,40,91,137\}$,&$\{0,11,27,90\}$,&$\{0,47,114,134\}$,\\
&$\{0,59,95,121\}$.
\end{tabular}
\end{center}
For all the other cases of $g$, apply Lemma \ref{lem:h=6}.
\end{proof}

Now we are in a position to give a proof of Theorem \autoref{thm:main}.

\begin{proof}[{\bf Proof of Theorem \autoref{thm:main}}]
By Lemma \ref{lem:nece-DF}, a $(gh,h,4,1)$-CDF exists only if $gh\equiv h\pmod{12}$ and $g\geq 4$. It is known that there is no $(gh,h,4,1)$-CDF for $(g,h)\in\{(9,3),(5,6)\}$ (cf. \cite{MBuratti2,Survey}). Combining the results of Lemmas \ref{lem-RCDF-main}, \ref{lem:h=2-small}, \ref{lem:h=3-small} and \ref{lem:h=6-small}, one can complete the proof.
\end{proof}

\subsection{General cases}\label{sec-recursions}

Difference matrices have proved to be a very useful tool for constructing cyclic relative difference families. Let $(G,+)$ be a finite group of order $v$ and $k\geq 2$ be an integer. A $(v,k,1)$-\emph{difference matrix} (DM) over $G$, or briefly $(G,k,1)$-DM, is a $k\times v$ matrix $D=(d_{ij})$ with entries from $G$ such that for any two distinct rows $x$ and $y$, the multiset of differences $\{d_{xj}-d_{yj}:1\leq j\leq v\}$ contains each element of $G$ exactly once. A $(\mathbb{Z}_v,k,1)$-DM is often called \emph{cyclic} and denoted by a $(v,k,1)$-CDM.

The following construction is a standard recursive construction for cyclic relative difference families.

\begin{Construction}\rm\cite{Jungnickel}\label{recursion}
If there exist a $(gn,g,k,1)$-CDF and an $(h,k,1)$-CDM, then there exists a $(gnh,gh,k,1)$-CDF. Furthermore, if there exists a $(gh,g,k,1)$-CDF, then there exists a $(ghn,g,k,1)$-CDF.
\end{Construction}

Evans \cite{Evans02} constructed a $(v,4,1)$-CDM for any odd integer $v\geq 5$ and $v$ is not divisible by $9$. Zhang, Feng and Wang \cite{zfw} solved the case $v\equiv 9\pmod{18}$.

\begin{Lemma}\label{cdm}{\rm \cite{Evans02,zfw}}
A $(v,4,1)$-CDM exists if and only if $v\geq 5$ is odd and $v\neq 9$.
\end{Lemma}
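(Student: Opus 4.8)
The plan is to prove the two directions separately, and on the sufficiency side to split according to whether $9$ divides $v$. The ``if'' direction is essentially a matter of assembling two known infinite families: by \cite{Evans02} there is a $(v,4,1)$-CDM for every odd $v\geq 5$ with $9\nmid v$, and by \cite{zfw} there is one for every $v\equiv 9\pmod{18}$ with $v\geq 27$; since these two ranges together exhaust all odd $v\geq 5$ with $v\neq 9$, sufficiency follows. So the substance of a self-contained proof lies in the ``only if'' direction, which I would argue as follows.

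Let $D=(d_{ij})$ be a $(v,4,1)$-CDM over $\Z_v$. Replacing each entry $d_{ij}$ by $d_{ij}-d_{1j}$ leaves every difference $d_{xj}-d_{yj}$ unchanged, so we may assume the first row is identically $0$. Then for each $i$ the list $\{-d_{ij}:1\leq j\leq v\}$ equals $\Z_v$, i.e.\ rows $2,3,4$ are permutations of $\Z_v$, and $\{d_{xj}-d_{yj}:1\leq j\leq v\}=\Z_v$ for $2\leq x<y\leq 4$. Summing the entries of rows $2$ and $3$ column by column gives on one hand $\sum_{j}(d_{2j}-d_{3j})=\sum_{g\in\Z_v}g-\sum_{g\in\Z_v}g=0$, and on the other hand $\sum_{j}(d_{2j}-d_{3j})=\sum_{g\in\Z_v}g=\tfrac{v(v-1)}{2}$; hence $\tfrac{v(v-1)}{2}\equiv 0\pmod v$, which forces $v$ to be odd. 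A direct check shows there is no $(3,4,1)$-CDM (there is no room for four pairwise difference-balanced permutations), so $v\geq 5$ with the degenerate value $v=1$ excluded by convention; and the remaining exception $v=9$ must be ruled out separately, either by exhausting the few normalized possibilities or as recorded in \cite{zfw}.

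For the sufficiency itself one would split on $\gcd(v,3)$. When $\gcd(v,6)=1$ the ``multiplication table'' $d_{ij}=(i-1)(j-1)\bmod v$ is a $(v,4,1)$-CDM: the difference of rows $x$ and $y$ is $(x-y)(j-1)$, and since $x-y\in\{\pm1,\pm2,\pm3\}$ is invertible modulo $v$, the map $j\mapsto(x-y)(j-1)$ permutes $\Z_v$. When $3\mid v$ this construction breaks down (as $3\mid(4-1)!$), and one must appeal to the direct constructions of \cite{Evans02} for $9\nmid v$ and of \cite{zfw} for $9\mid v$. The case $9\mid v$ is the main obstacle: both the multiplication-table trick and the easy recursive reductions fail, additional arithmetic obstructions appear, and one is pushed into an ad hoc direct construction together with a computer search for the smallest values — which is precisely why this case was resolved only in \cite{zfw}, after the rest of the range had long been understood.
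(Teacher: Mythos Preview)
The paper does not prove this lemma at all: it is stated as a cited result, with the preceding sentence attributing the case $9\nmid v$ to \cite{Evans02} and the case $v\equiv 9\pmod{18}$ to \cite{zfw}. Your proposal is therefore strictly more detailed than the paper's treatment, and the sufficiency portion matches the paper's attribution exactly.

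Your additional material on necessity is correct. The sum argument showing $v$ must be odd is the standard one and is cleanly executed; your handling of $v=3$ (``no room for four pairwise difference-balanced permutations'') is right, as a short case check confirms the maximum set of pairwise difference-compatible permutations of $\Z_3$ has size~$2$. The only minor quibble is that the nonexistence of a $(9,4,1)$-CDM is classical and predates \cite{zfw}, so attributing it there is slightly anachronistic, though your hedge (``either by exhausting the few normalized possibilities or as recorded in \cite{zfw}'') covers this. The multiplication-table construction for $\gcd(v,6)=1$ is also a useful addition not present in the paper.
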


By Lemma \ref{lem:nece-DF}, a $(4h,h,4,1)$-CDF exists only if $h\equiv 0\pmod{4}$. Since a  $(4h,h,4,1)$-CDF yields a strictly cyclic group divisible design of type $h^4$ with block size four, which can produce an $(h,4,1)$-CDM (cf. \cite[Theorem VIII.3.6]{bjl}), it follows from Lemma \ref{cdm} that we have the following lemma.

\begin{Lemma}\label{nonexist-DF}
There does not exist a $(4h,h,4,1)$-CDF for any positive integer $h$.
\end{Lemma}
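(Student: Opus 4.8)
The plan is to derive a contradiction by combining the arithmetic necessary condition of Lemma~\ref{lem:nece-DF} with the nonexistence of cyclic difference matrices of even order supplied by Lemma~\ref{cdm}.

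First I would record the consequences of Lemma~\ref{lem:nece-DF} for the parameters $g=4$, $|G|=4h$, $|H|=h$, $k=4$. The congruence $|G|\equiv|H|\pmod{k(k-1)}$ becomes $4h\equiv h\pmod{12}$, i.e. $3h\equiv0\pmod{12}$, i.e. $h\equiv0\pmod4$; the inequality $|G|\geq k|H|$ becomes $4h\geq4h$, which is automatically satisfied. So a $(4h,h,4,1)$-CDF can only exist when $h\equiv0\pmod4$; in particular $h$ must be even and $h\geq 4$.

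Next I would pass from a hypothetical $(4h,h,4,1)$-CDF over $\Z_{4h}$ (relative to the subgroup $H=4\Z_{4h}$ of order $h$) to a cyclic difference matrix. Developing the base blocks over $\Z_{4h}$ and taking the four cosets of $H$ as groups yields a group divisible design with block size $4$ and type $h^4$: two points in distinct cosets of $H$ lie together in exactly one developed block, because their difference is in $\Z_{4h}\setminus H$ and hence covered exactly once by $\Delta\F$, while two points in the same coset never occur together in a block since all within-block differences avoid $H\setminus\{0\}$. Since each block has four points and meets each of the four groups in at most one point, every block is a transversal, so this is in fact a strictly cyclic transversal design $\mathrm{TD}(4,h)$, with $\Z_{4h}$ acting regularly on points and the subgroup $H\cong\Z_h$ acting with the four groups as its orbits, each regularly. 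By \cite[Theorem VIII.3.6]{bjl}, such a strictly cyclic GDD of type $h^4$ with block size $4$ is equivalent to an $(h,4,1)$-CDM.

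Finally, Lemma~\ref{cdm} tells us that an $(h,4,1)$-CDM exists only when $h\geq5$ is odd and $h\neq9$. Since we already forced $h\equiv0\pmod4$, this is a contradiction, and the lemma follows. The only genuinely nontrivial step is the middle one — correctly invoking the equivalence between a strictly cyclic GDD of type $h^4$ with blocks of size $4$ and an $(h,4,1)$-CDM, in particular checking that the ambient $\Z_{4h}$-translation action restricts to the $\Z_h$-action on the groups needed to extract the difference matrix — but this is exactly the cited result, so I do not expect any real obstacle: the statement is a short deduction from Lemmas~\ref{lem:nece-DF} and~\ref{cdm}.
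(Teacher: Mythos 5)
Your proof is correct and follows essentially the same route as the paper: Lemma~\ref{lem:nece-DF} forces $h\equiv 0\pmod{4}$, the hypothetical CDF is converted into a strictly cyclic $4$-GDD of type $h^4$ and hence into an $(h,4,1)$-CDM via \cite[Theorem VIII.3.6]{bjl}, and Lemma~\ref{cdm} then excludes even $h$, giving the contradiction. The only difference is that you spell out the CDF-to-GDD verification, which the paper treats as the standard equivalence.
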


\begin{Theorem}\label{thm:RDF-general}
There exists a $(gh,h,4,1)$-CDF if and only if $gh\equiv h\pmod{12}$ and $g\geq 5$ except for $(g,h)\in\{(9,3),(5,6)\}$ and except possibly for
\begin{enumerate}
\item[$(1)$] $h\equiv 1,5,7,11\pmod{12}$ and $g=25$,
\item[$(2)$] $h\in\{9,27\}$ and $g\equiv1\pmod{4}$,
\item[$(3)$] $h\equiv3,9\pmod{12}$ and $g=9$,
\item[$(4)$] $h\equiv 8,16\pmod{24}$ and $g\equiv1\pmod{3}$,
\item[$(5)$] $h\equiv 4,20\pmod{24}$ and $g=7$,
\item[$(6)$] $h\equiv 6\pmod{12}$ and $g=5$,
\item[$(7)$] $h=18$ and $g\equiv 1\pmod{2}$,
\item[$(8)$] $h=54$ and $g\equiv 3,5\pmod{6}$,
\item[$(9)$] $h\equiv 0\pmod{24}$ and $g\geq 5$,
\item[$(10)$] $h\equiv 12\pmod{24}$, $g\equiv 0,2\pmod{3}$ and $g\geq 5$,
\item[$(11)$] $h\in\{12,36\}$, $g\equiv 1\pmod{3}$ and $g\geq 10$.
\end{enumerate}
\end{Theorem}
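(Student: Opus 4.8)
The plan is to deduce Theorem~\ref{thm:RDF-general} from the two available families of base cases --- $h\in\{1,4\}$ from Theorem~\ref{thm:cyclic design k=4} and $h\in\{2,3,6\}$ from Theorem~\ref{thm:main} --- by feeding them into the recursive Construction~\ref{recursion}, using the cyclic difference matrices provided by Lemma~\ref{cdm}. Necessity needs nothing new: Lemma~\ref{lem:nece-DF} gives $gh\equiv h\pmod{12}$ and $g\geq 4$, Lemma~\ref{nonexist-DF} removes $g=4$, and the non-existence at $(g,h)\in\{(9,3),(5,6)\}$ is quoted. So all the content is in the sufficiency direction, which is a case analysis.

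Given $h$, I would write $h=h_0w$ with $h_0$ a divisor of $h$ in $\{1,2,3,4,6\}$ and $w=h/h_0$ \emph{odd}, then apply the first part of Construction~\ref{recursion} to a base $(h_0g,h_0,4,1)$-CDF and a $(w,4,1)$-CDM, producing a $(h_0\cdot g\cdot w,\;h_0w,4,1)=(gh,h,4,1)$-CDF. The choice of $h_0$ is dictated by $\gcd(h,12)$: take $h_0\in\{1,2,3,6\}$ when $\gcd(h,12)\in\{1,2,3,6\}$, and $h_0=4$ when $4\mid h$ and $8\nmid h$ (the cases $h\equiv 4,20\pmod{24}$ when $3\nmid h$, and $h\equiv 12\pmod{24}$ when $3\mid h$); in each of these, $w$ comes out odd. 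Lemma~\ref{cdm} then gives the $(w,4,1)$-CDM whenever $w\notin\{3,9\}$ (trivially when $w=1$), and Theorem~\ref{thm:cyclic design k=4} or Theorem~\ref{thm:main} gives the base $(h_0g,h_0,4,1)$-CDF exactly when $g$ lies in the residue class mod $12/\gcd(h_0,12)$ forced by that theorem and avoids the finitely many bad values inherited from it ($g=25$ for $h_0=1$, $g=9$ for $h_0=3$, $g=7$ for $h_0=4$, $g=5$ for $h_0=6$, while $h_0=2$ forces $g\geq 7$ automatically). The routine but essential point is that, except when $h\equiv 12\pmod{24}$, the congruence the base theorem imposes on $g$ is exactly the one the overall necessary condition already imposes, so the recursion costs nothing; for $h\equiv 12\pmod{24}$ the only admissible $h_0$ is $4$, which is strictly more demanding (forcing $g\equiv 1\pmod 3$), and this discrepancy is the source of one exceptional family.

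Tracing the cases then exhibits exactly four kinds of obstruction, matching items~(1)--(11): (a) when $8\mid h$ and $3\nmid h$, or $24\mid h$, no admissible decomposition with $w$ odd exists at all (items~(4),(9)); (b) when $h\equiv 12\pmod{24}$ the forced choice $h_0=4$ leaves $g\not\equiv 1\pmod 3$ uncovered (item~(10)); (c) for the short list of orders whose forced $w$ falls in $\{3,9\}$, namely $h\in\{9,12,18,27,36,54\}$, the required CDM does not exist (items~(2),(7),(8),(11) and the $h=9$ part), although for $h=27$ and $h=54$ the alternative splittings $h_0=1,\,w=27$ and $h_0=2,\,w=27$ recover the sub-cases with $g$ in the stricter class $g\equiv 1\pmod{12}$ (resp.\ $g\equiv 1\pmod 6$), which pins down the exact shapes of items~(2) and~(8); and (d) the leftover small values of $g$ from the base theorems yield items~(1),(3),(5),(6). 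Any remaining stragglers of small order that the recursion cannot reach are to be mopped up by ad hoc or known direct constructions.

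The main obstacle, as I see it, is the bookkeeping rather than any single hard idea: one must organize the split by the $2$-adic and $3$-adic valuations of $h$ (equivalently by $h\bmod 24$) so that it is visibly exhaustive, check in every surviving case that no congruence mismatch between the base theorem and the necessary condition silently enlarges the exceptional set (the one genuine discrepancy, at $h\equiv 12\pmod{24}$, being accounted for), and pin down the finite list of $w\in\{3,9\}$ and small-$g$ situations where the recursion truly fails --- confirming in particular that the reroutings for $h=27$ and $h=54$ recover everything they possibly can, so that the eleven exceptional families are as tight as claimed.
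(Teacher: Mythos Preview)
Your strategy is the same as the paper's, and most of the case analysis is correct. There is, however, one genuine gap.

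For $h\equiv 12\pmod{24}$ and $g=7$, your only admissible choice is $h_0=4$, but Theorem~\ref{thm:cyclic design k=4} explicitly excludes $(g,h_0)=(7,4)$: there is no $(28,4,4,1)$-CDF. So your recursion produces nothing at $g=7$ for this entire residue class of $h$. Yet the theorem claims existence here: $g=7$ lies in none of items~(5), (9), (10), or (11). This is not a finite set of ``stragglers'' to be mopped up ad hoc---it is the infinite family $h\in\{12,36,60,84,108,\ldots\}$, $g=7$. The paper closes this gap by supplying an explicit $(84,12,4,1)$-CDF as a \emph{new} base, then applying Construction~\ref{recursion} with an $(h/12,4,1)$-CDM (which exists for $h/12$ odd, $\geq 5$, $\neq 9$), together with two further sporadic inputs from the literature for $h\in\{36,108\}$ where $h/12\in\{3,9\}$ blocks the CDM. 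Without this extra base your argument would force an additional exceptional line ``$h\equiv 12\pmod{24}$ and $g=7$'', which is not in the statement.

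A minor point: your rerouting for $h=27$ via $h_0=1$, $w=27$ is valid and indeed recovers all $g\equiv 1\pmod{12}$ with $g\neq 25$, but this is \emph{stronger} than what item~(2) records---the paper does not perform that rerouting, and item~(2) as stated leaves all of $g\equiv 1\pmod{4}$ open for $h\in\{9,27\}$. So your phrase ``pins down the exact shape of item~(2)'' is not quite right; you would be proving slightly more than claimed there (which is harmless), while your $h=54$ rerouting does match item~(8) exactly, as the paper also does.
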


\begin{proof}
By Lemmas \ref{lem:nece-DF} and \ref{nonexist-DF}, a $(gh,h,4,1)$-CDF exists only if $g\geq 5$ and $h(g-1)\equiv 0\pmod{12}$, that is, (i) $h\equiv 1,5,7,11\pmod{12}$ and $g\equiv1\pmod{12}$, (ii) $h\equiv 2,10\pmod{12}$ and $g\equiv1\pmod{6}$, (iii) $h\equiv 3,9\pmod{12}$ and $g\equiv1\pmod{4}$, (iv) $h\equiv 4,8\pmod{12}$ and $g\equiv1\pmod{3}$, (v) $h\equiv 6\pmod{12}$ and $g\equiv1\pmod{2}$, (vi) $h\equiv 0\pmod{12}$ and $g\geq 5$.

For $h\equiv 1,5,7,11\pmod{12}$, $g\equiv1\pmod{12}$ and $g\neq 25$, apply Construction \ref{recursion} with a $(g,1,4,1)$-CDF (from Theorem \ref{thm:cyclic design k=4}) and an $(h,4,1)$-CDM (from Lemma \ref{cdm}) to obtain a $(gh,h,4,1)$-CDF.

For $h\equiv 2,10\pmod{12}$ and $g\equiv1\pmod{6}$, apply Construction \ref{recursion} with a $(2g,2,4,1)$-CDF (from Theorem \ref{thm:main}) and an $(h/2,4,1)$-CDM.

For $h\equiv 3,9\pmod{12}$, $g\equiv1\pmod{4}$, $h\not\in\{9,27\}$ and $g\neq 9$, apply Construction \ref{recursion} with a $(3g,3,4,1)$-CDF (from Theorem \ref{thm:main}) and an $(h/3,4,1)$-CDM.

For $h\equiv 4,20\pmod{24}$, $g\equiv1\pmod{3}$ and $g\geq 10$, apply Construction \ref{recursion} with a $(4g,4,4,1)$-CDF (from Theorem \ref{thm:cyclic design k=4}) and an $(h/4,4,1)$-CDM.

For $h\equiv 6\pmod{12}$, $g\equiv1\pmod{2}$, $h\not\in\{18,54\}$ and $g\geq 7$, apply Construction \ref{recursion} with a $(6g,6,4,1)$-CDF (from Theorem \ref{thm:main}) and an $(h/6,4,1)$-CDM. For $h=54$ and $g\equiv1\pmod{6}$, apply Construction \ref{recursion} with a $(2g,2,4,1)$-CDF (from Theorem \ref{thm:main}) and a $(27,4,1)$-CDM.

For $h\equiv 12\pmod{24}$, $g\equiv1\pmod{3}$, $h\geq 60$ and $g\geq 10$, apply Construction \ref{recursion} with a $(4g,4,4,1)$-CDF (from Theorem \ref{thm:cyclic design k=4}) and an $(h/4,4,1)$-CDM. For $h\equiv 12\pmod{24}$, $g=7$, $h\geq 60$ and $h\neq 108$, start from an $(84,12,4,1)$-CDF whose base blocks are listed below:
\begin{center}
\begin{tabular}{llllll}
$\{0,1,3,9\}$,&
$\{0,4,15,37\}$,&
$\{0,5,24,55\}$,&
$\{0,10,30,46\}$,&
$\{0,12,25,57\}$,&
$\{0,17,40,58\}$,
\end{tabular}
\end{center}
and then apply Construction \ref{recursion} with an $(h/12,4,1)$-CDM. By \cite[Lemma 3.10]{CFM}, there exists a $(36\times 7,36,4,1)$-CDF. By \cite[Theorem 3.9]{CFM}, there exists a $(108\times 7,108,4,1)$-CDF.
\end{proof}

\section{Applications}\label{sec-3}

\subsection{Cyclic group divisible designs}\label{gdd}

A {\em group divisible design} (GDD) is a triple $(V,\G,\B)$, where $V$ is a set of {\em points}, $\G$ is a partition of $V$ into {\em groups}, and $\B$ is a family of subsets of $V$ (called {\em blocks}) such that every block intersects any given group in at most one point, and every two points belonging to distinct groups are contained in exactly one block. A $k$-GDD of {\em type} $h^g$ is a GDD in which the size of each block is $k$ and there are $g$ groups of size $h$. For more details on group divisible designs, the reader is referred to \cite{bjl1,ge}.

An {\em automorphism} of a GDD ($V,{\cal G},{\cal B}$) is a permutation on $V$ leaving ${\cal G}$ and ${\cal B}$ invariant, respectively. A $k$-GDD of type $h^g$ is said to be {\em cyclic} if it admits an automorphism consisting of a cycle of length $gh$. Without loss of generality, we regard $\Z_{gh}$ as an automorphism group of a cyclic $k$-GDD of type $h^g$. If the stabilizer of any block of a cyclic GDD of type $h^g$ in $\Z_{gh}$ is trivial, then the GDD is called {\em strictly cyclic}. The blocks of a strictly cyclic $k$-GDD of type $h^g$ can be partitioned into orbits under ${\mathbb Z}_{gh}$. Choose any fixed block from each orbit and then call them base blocks. These base blocks form a $(gh,h,k,1)$-CDF. Therefore, a strictly cyclic $k$-GDD of type $h^g$ yields a $(gh,h,k,1)$-CDF, and vice versa. Combining Theorem \ref{thm:RDF-general}, we have the following theorem.

\begin{Theorem}\label{thm:CGDD-general}
There exists a strictly cyclic $4$-GDD of type $h^g$ if and only if $gh\equiv h\pmod{12}$ and $g\geq 5$ except for $(g,h)\in\{(9,3),(5,6)\}$ and except possibly for
\begin{enumerate}
\item[$(1)$] $h\equiv 1,5,7,11\pmod{12}$ and $g=25$,
\item[$(2)$] $h\in\{9,27\}$ and $g\equiv1\pmod{4}$,
\item[$(3)$] $h\equiv3,9\pmod{12}$ and $g=9$,
\item[$(4)$] $h\equiv 8,16\pmod{24}$ and $g\equiv1\pmod{3}$,
\item[$(5)$] $h\equiv 4,20\pmod{24}$ and $g=7$,
\item[$(6)$] $h\equiv 6\pmod{12}$ and $g=5$,
\item[$(7)$] $h=18$ and $g\equiv 1\pmod{2}$,
\item[$(8)$] $h=54$ and $g\equiv 3,5\pmod{6}$,
\item[$(9)$] $h\equiv 0\pmod{24}$ and $g\geq 5$,
\item[$(10)$] $h\equiv 12\pmod{24}$, $g\equiv 0,2\pmod{3}$ and $g\geq 5$,
\item[$(11)$] $h\in\{12,36\}$, $g\equiv 1\pmod{3}$ and $g\geq 10$.
\end{enumerate}
\end{Theorem}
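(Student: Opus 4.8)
\textbf{Proof proposal for Theorem \ref{thm:CGDD-general}.}

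The plan is to deduce this theorem directly from Theorem \ref{thm:RDF-general} via the equivalence between strictly cyclic $4$-GDDs of type $h^g$ and $(gh,h,4,1)$-CDFs that was established in the paragraph preceding the statement. Concretely: given a strictly cyclic $4$-GDD of type $h^g$ on the point set $\Z_{gh}$ with group partition $\{\,i + h\Z_{gh} : 0 \le i < h\,\}$, every block orbit under $\Z_{gh}$ has full length $gh$ (by strict cyclicity), so picking one representative block from each orbit yields a collection $\F$ of $4$-subsets; the GDD conditions say precisely that each element of $\Z_{gh}\setminus h\Z_{gh}$ occurs exactly once as a difference from some block of $\F$ and no element of the subgroup $h\Z_{gh}$ occurs. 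Hence $\F$ is a $(\Z_{gh}, h\Z_{gh}, 4, 1)$-DF, i.e. a $(gh,h,4,1)$-CDF. Conversely, developing the base blocks of a $(gh,h,4,1)$-CDF under $\Z_{gh}$ produces all blocks of a strictly cyclic $4$-GDD of type $h^g$: the difference condition guarantees that any two points in distinct groups lie in exactly one block and that no block meets a group twice. These two passages are inverse to one another, so the existence questions are literally the same.

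With the equivalence in hand, the theorem is immediate: a strictly cyclic $4$-GDD of type $h^g$ exists if and only if a $(gh,h,4,1)$-CDF exists, and Theorem \ref{thm:RDF-general} characterizes exactly when the latter exists. Since the list of determined cases, the excluded pairs $(g,h)\in\{(9,3),(5,6)\}$, and the eleven families of possible exceptions are transported verbatim, the statement of Theorem \ref{thm:CGDD-general} matches that of Theorem \ref{thm:RDF-general} word for word, and no further work is needed.

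The only point requiring care — and the closest thing to an obstacle — is the bookkeeping in the equivalence, namely verifying that \emph{strict} cyclicity is the correct hypothesis (so that every orbit has length exactly $gh$ and the base-block count $(gh - h)/12$ matches the number of base blocks of a $(gh,h,4,1)$-CDF), and that the group partition of the GDD corresponds to the coset partition of $h\Z_{gh}$ in $\Z_{gh}$. Both of these are standard and were already spelled out in the preamble to the theorem, so the proof reduces to invoking that discussion together with Theorem \ref{thm:RDF-general}.
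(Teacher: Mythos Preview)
Your approach is exactly the paper's: invoke the equivalence between strictly cyclic $4$-GDDs of type $h^g$ and $(gh,h,4,1)$-CDFs stated in the paragraph before the theorem, and then quote Theorem~\ref{thm:RDF-general} verbatim. That is all the paper does.

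One bookkeeping slip to fix: in $\Z_{gh}$ the subgroup of order $h$ is $g\Z_{gh}$, not $h\Z_{gh}$. The group partition of a cyclic GDD of type $h^g$ is therefore $\{\,i+g\Z_{gh}:0\le i<g\,\}$ (that is, $g$ groups of size $h$), and the forbidden differences in the corresponding CDF are the elements of $g\Z_{gh}$. As written, your cosets $\{\,i+h\Z_{gh}:0\le i<h\,\}$ give $h$ groups of size $g$, i.e.\ type $g^h$, and the difference condition would miss the wrong subgroup. This is purely notational and does not affect the logic of the reduction, but it should be corrected.
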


\subsection{1-rotational Steiner systems}\label{rotational}

A $k$-GDD of type $1^v$ is often called a {\em Steiner system} and denoted by an S$(2,k,v)$. An S$(2,k,v)$ is said to be {\em $1$-rotational} if it admits an automorphism consisting of one fixed point and a cycle of length $v-1$. A 1-rotational S$(2,k,v)$ defined on $X$ may be assumed to have $X=\Z_{v-1}\cup\{\infty\}$ with the 1-rotational automorphism given by $i\rightarrow i+1\pmod{v-1}$ for $i\in \Z_{v-1}$ and $\infty \rightarrow \infty$.

Phelps and Rosa \cite{Phelps} showed that a 1-rotational S$(2,3,v)$ exists if and only if $v\equiv 3,9\pmod{24}$. It was conjectured in \cite[Section 3.2]{Survey} that a 1-rotational S$(2,4,v)$ exists if and only if $v\equiv4\pmod{12}$ and $v\neq 28$. Applying Theorem \ref{thm:main}, we can confirm this conjecture.

\begin{Theorem}\label{thm:1-rotational}
There exists a 1-rotational S$(2,4,v)$ if and only if $v\equiv4\pmod{12}$ and $v\neq 28$.
\end{Theorem}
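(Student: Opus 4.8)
The plan is to reduce the existence of a 1-rotational S$(2,4,v)$ to the existence of a $(v-1, 3, 4, 1)$-CDF, which is exactly the case $h=3$ of Theorem~\ref{thm:main}. First I would establish the necessity of the congruence condition. A 1-rotational S$(2,4,v)$ is in particular an S$(2,4,v)$, so the standard divisibility conditions force $v \equiv 1$ or $4 \pmod{12}$. If $v \equiv 1 \pmod{12}$, then $v-1 \equiv 0 \pmod{12}$; but then the short orbit: under the automorphism $i \mapsto i+1$, a block through $\infty$ has the form $\{\infty, a, b, c\}$ with $\{a,b,c\} \subseteq \Z_{v-1}$, and the orbit of such a block has length $v-1$ unless it has a nontrivial stabilizer, which for a $3$-subset of $\Z_{v-1}$ can only happen when $3 \mid (v-1)$ in a very restricted way; in any case a counting argument on the pairs $\{\infty, x\}$ shows each must be covered exactly once by a block through $\infty$, and the blocks through $\infty$ form an orbit (or orbits) partitioning $\{\infty\}\times \Z_{v-1}$, which forces $(v-1)/3$ base blocks through $\infty$ and hence $3 \mid v-1$, i.e. $v \equiv 4 \pmod{12}$. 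The exclusion $v \neq 28$ comes from the known nonexistence of a $(27,3,4,1)$-CDF, which is precisely the excluded pair $(g,h)=(9,3)$ in Theorem~\ref{thm:main}.

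For sufficiency, let $v \equiv 4 \pmod{12}$ with $v \neq 28$, and set $n = v-1 \equiv 3 \pmod{12}$, so $n = 3g$ with $g \equiv 1 \pmod 4$ and $g \geq 4$, $g \neq 9$ (the latter because $v \neq 28$). By Theorem~\ref{thm:main} (case $h=3$) there exists a $(3g, 3, 4, 1)$-CDF, equivalently a $(\Z_n, 3\Z_n, 4, 1)$-DF, i.e. a family $\F$ of $4$-subsets of $\Z_n$ whose differences cover $\Z_n \setminus 3\Z_n$ exactly once. The construction then takes the point set $X = \Z_n \cup \{\infty\}$ and as base blocks (i) all blocks of $\F$, and (ii) a single ``short'' block that, together with its $\Z_n$-translates, takes care of the subgroup $3\Z_n$ and the pairs involving $\infty$. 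The natural choice is $B_0 = \{\infty, 0, g, 2g\}$: its translates cover every pair $\{\infty, x\}$ (since $\{0,g,2g\}$ is a coset-transversal-like orbit hitting all residues of $\Z_n$ exactly once as we translate), and $\Delta B_0 \cap \Z_n = \{\pm g, \pm 2g\} = 3\Z_n \setminus \{0\}$, whose translates cover every nonzero element of $3\Z_n$ exactly once. Combining, every pair from $X$ is covered exactly once, so the orbit of $\F \cup \{B_0\}$ under $i \mapsto i+1$ (fixing $\infty$) is a 1-rotational S$(2,4,v)$.

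The one delicate point I would check carefully is that the short orbit behaves correctly: the $n$ translates of $\{\infty, 0, g, 2g\}$ must cover each pair $\{\infty, x\}$ exactly once and each pair within $3\Z_n$ exactly once, with no pair over-covered; this is where the hypothesis $3 \mid n$ is essential and where a naive choice of short block would fail (for instance if the stabilizer of $\{0,g,2g\}$ in $\Z_n$ were nontrivial, which it is not since $g \neq 2g \pmod n$ for $g \geq 1$). I expect this bookkeeping to be the main obstacle, though it is routine; the genuinely hard content — producing the $(3g,3,4,1)$-CDF for all admissible $g$ — has already been discharged by Theorem~\ref{thm:main}. Finally, for the small cases not covered by the generic construction (if $g$ is too small for Theorem~\ref{thm:main}'s direct families), one falls back on Lemma~\ref{lem:h=3-small}, which supplies the needed CDFs for $5 \leq g \leq 69$, $g \neq 9$; these plug into the same wrapping construction.
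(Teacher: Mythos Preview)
Your overall approach matches the paper's: obtain a $(v-1,3,4,1)$-CDF from Theorem~\ref{thm:main} (case $h=3$) and adjoin the base block $\{\infty, 0, g, 2g\}$ with $g = (v-1)/3$. The paper cites \cite{Survey} for necessity and then writes down precisely this block set for sufficiency.

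There is, however, a genuine error in your handling of the short orbit. You claim the stabilizer of $\{0,g,2g\}$ in $\Z_n$ ($n=3g$) is trivial, reasoning that $g \neq 2g \pmod n$. But $\{0,g,2g\} + g = \{g,2g,0\}$ in $\Z_{3g}$, so the stabilizer is the full order-$3$ subgroup $\{0,g,2g\}$ itself. This nontrivial stabilizer is not a defect to be ruled out --- it is exactly what makes the construction work: the orbit of $B_0$ has length $n/3 = g$, and the paper accordingly takes only the translates $B_0 + t$ with $0 \le t < (v-1)/3$. If you used all $n$ translates as you propose, every pair $\{\infty,x\}$ (and every pair inside the order-$3$ subgroup) would be covered three times, not once. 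A related slip: your ``$3\Z_n$'' should read $g\Z_n$, since in $\Z_{3g}$ the subgroup $3\Z_n$ has order $g$, not $3$; consequently your equation $\{\pm g, \pm 2g\} = 3\Z_n \setminus \{0\}$ is false as written. Your necessity argument also has a gap: deducing $3 \mid v-1$ does not separate $v \equiv 4 \pmod{12}$ from $v \equiv 1 \pmod{12}$, as both satisfy it, and the exclusion $v \ne 28$ requires the unstated converse that a $1$-rotational S$(2,4,28)$ would yield a $(27,3,4,1)$-CDF. The paper sidesteps all of this by citing the literature.
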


\begin{proof}
The necessity follows from \cite[Section 3.2]{Survey}. To show the sufficiency, take a $(v-1,3,4,1)$-CDF, $\F$, with $v\equiv4\pmod{12}$ and $v\neq 28$ from Theorem \ref{thm:main}. Then it is readily checked that $\{\{f_1+t,f_2+t,f_3+t,f_4+t\}:F=\{f_1,f_2,f_3,f_4\}\in \F,t\in \Z_{v-1}\}\cup\{\{t,\frac{v-1}{3}+t,\frac{2(v-1)}{3}+t,\infty\}:0\leq t<\frac{v-1}{3}\}$ forms a 1-rotational S$(2,4,v)$.
\end{proof}

For more information on 1-rotational Steiner systems, the reader is referred to \cite{ab,Jimbo1}.

\subsection{Optimal cyclic constant-weight codes}\label{sec-5-2}

A $q$-ary code $\mathcal{C}$ of length $n$ is a set of vectors in $\{0,1,\ldots,q-1\}^n$. An element in $\mathcal{C}$ is called a {\em codeword}. For two codewords $u=(u_i)_{0\leq i\leq n-1}$ and $v=(v_i)_{0\leq i\leq n-1}$, the {\em Hamming distance} between $u$ and $v$ is $d_H(u,v)=|\{0\leq i\leq n-1:u_i-v_i\neq 0\}|$. The {\em Hamming weight} of $u$ is the
Hamming distance between $u$ and the zero vector 0. A code is said to be of {\em constant-weight} $w$ if the Hamming weight of each codeword is a constant $w$. A code $\mathcal{C}$ is said to have {\em minimum distance} $d$ if $d_H(u,v)\geq d$ for all distinct $u,v\in \mathcal{C}$. A $q$-ary code of length $n$, minimum distance $d$ and constant-weight $w$ is denoted by an $(n,d,w)_q$ code.

An $(n,d,w)_q$ code $\mathcal{C}$ is said to be {\em cyclic} if $(u_0,u_1,\dots,u_{n-1}) \in \mathcal{C}$ implies $(u_1,u_2,\ldots,u_{n-1},u_0)\in \mathcal{C}$. We use the notation $CA_q(n,d,w)$ to denote the largest possible number of codewords among all cyclic $(n,d,w)_q$ codes with given parameters $n,d,w$ and $q$. A cyclic $(n,d,w)_q$ code is called {\em optimal} if it has $CA_q(n,d,w)$ codewords.

Lan, Chang and Wang \cite{Lan2} gave an equivalent definition of a cyclic $(n,d,w)_q$ code from a set-theoretic perspective.
Let $\mathcal{C}$ be a cyclic $(n,d,w)_q$ code. For each $u=(u_i)_{0\leq i\leq n-1}\in\mathcal{C}$, construct a $w$-subset $B_u$ of $\Z_n\times \{1,2,\ldots,q-1\}$ such that $(i,u_i)\in B_u$ if and only if $u_i$ is nonzero. Let $\mathcal{C}'=\{B_{u}:u\in \mathcal{C}\}$. For any codeword $B=\{(i_1,a_1),(i_2,a_2),\ldots,(i_w,a_w)\}\in \mathcal{C^{'}}$, define the {\em projection} $p(B)$ of $B$ to be the set $p(B)=\{i_1,i_2,\ldots,i_w\}$. For any $B_u,B_v \in \mathcal{C^{'}}$, define the distance $d_H(B_u,B_v)$ to be $d_H(u,v)$. It is readily checked that $d_H(B_u,B_v)=2w-|p(B_u)\cap p(B_v)|-|B_u \cap B_v|$. Then a cyclic $(n,d,w)_q$ code $\mathcal{C}$ is a set of $w$-subsets of $\Z_n\times \{1,2,\ldots,q-1\}$ satisfying that (1) for any $B\in {\cal C}$, the cardinality of $p(B)$ is $w$; (2) $d_H(A,B)\ge d$ for any distinct codewords $A,B\in {\cal C}$; (3) if $B\in {\cal C}$, then $B+1\in {\cal C}$, where $B+1=\{(x+1\mod{n},a):(x,a)\in B\}$.

\begin{Lemma}\rm\label{lem-5-1}\cite[Lemma 3.4]{Lan1}
For any positive integer $n$,
$$
CA_3(n,6,4)\leq C(n):=\left\{
\begin{array}{ll}
n\lfloor\frac{n-1}{6}\rfloor,  & \mbox{if}\ n\ \mbox{is odd or } n\equiv 2\ ({\rm mod}\ 6);\\

n\lfloor\frac{n-1}{6}\rfloor+\frac{n}{2}, & \mbox{if}\ n \equiv 0,4\ ({\rm mod}\ 6).
\end{array}\right.
$$
\end{Lemma}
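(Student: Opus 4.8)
The plan is to count in two ways the number of triples that a cyclic $(n,6,4)_3$ code can ``support,'' using the set-theoretic reformulation of Lan, Chang and Wang recalled above. Fix an optimal code $\mathcal{C}$ viewed as a set of $4$-subsets of $\Z_n\times\{1,2\}$ with the three listed properties; in particular the minimum-distance condition $d_H(A,B)=2\cdot 4-|p(A)\cap p(B)|-|A\cap B|\geq 6$ for distinct $A,B$ means $|p(A)\cap p(B)|+|A\cap B|\leq 2$. The key combinatorial object to track is, for each codeword $B$, the set of $2$-subsets $\{x,y\}$ of its projection $p(B)$, equivalently the ``difference'' $\pm(x-y)\in\Z_n$ together with the pair of second coordinates sitting over $x$ and $y$. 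I would show that two distinct codewords cannot both contain a pair of positions at the same cyclic distance with a ``compatible'' colouring, so that these coloured differences are essentially used up at most once; comparing the total supply of coloured differences in $\Z_n$ with the demand $\binom{4}{2}=6$ per orbit of codewords yields $|\mathcal{C}|\le C(n)$ after accounting for short orbits.

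Concretely, first I would reduce to counting orbits: since $B\in\mathcal{C}\Rightarrow B+1\in\mathcal{C}$, the code is a union of $\Z_n$-orbits, each of full length $n$ unless $B$ has a nontrivial stabilizer. A codeword fixed by $+t$ with $t\mid n$, $t<n$, would force $p(B)$ to be a union of cosets of $\langle t\rangle$, which — given $|p(B)|=4$ — is possible only when $n/\gcd$ is small; the only surviving case is $n$ even with a codeword invariant under $+n/2$, whose projection is two antipodal pairs, contributing orbits of length $n/2$. This is exactly the source of the extra term $n/2$ when $n\equiv 0,4\pmod 6$, and such codewords are ruled out when $n$ is odd or $n\equiv2\pmod 6$ because then $n/2\notin\Z$ or the required internal-distance bound fails. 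Second, for the full-length orbits: each orbit is determined by one base block $B=\{(i_1,a_1),\dots,(i_4,a_4)\}$, and the $6$ differences $i_r-i_s$ ($r\ne s$), taken as unordered pairs in $\{1,\dots,n-1\}$ with their attached ordered colour pairs, must all be distinct across the whole code — otherwise two codewords $B,B'$ (in possibly different orbits, after translating) would share a matched coloured pair of positions, giving $|p(B)\cap p(B')|\ge 2$ and hence, by the minimum-distance inequality, $|B\cap B'|=0$ with $|p(B)\cap p(B')|=2$, which I would show still leads to a contradiction with $d\ge6$ once the colours are tracked (the two shared positions carry two prescribed colours, so the overlap in $B$ forces $d_H<6$). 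Thus the number of base blocks times $6$ is at most the number of available coloured differences; a careful tally of how many of the $\binom{n}{2}$ cyclic distances admit how many colourings (distances $i$ with $2i\not\equiv 0$ versus the self-paired distance $n/2$ when $n$ is even) gives precisely $n\lfloor (n-1)/6\rfloor$ full-orbit codewords, and adding the short-orbit contribution yields $C(n)$.

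The main obstacle I anticipate is the bookkeeping in the ``coloured difference'' argument: one must be careful that a single cyclic distance $i$ can be realized inside a block in several ways with genuinely different second-coordinate patterns, and that translating one base block against another can create a coincidence of positions without a coincidence of the full coloured pair — so the clean statement ``each coloured difference used at most once'' has to be phrased at the right level of granularity (difference together with the \emph{multiset} of the two colours, and then a further parity/colour refinement) to make the minimum-distance inequality bite. Getting the constant exactly right, rather than within a small additive error, is where the case split on $n\bmod 6$ and on the parity of $n$ enters, and that is the delicate part of the count; everything else is a routine orbit-counting argument built on the reformulation already recorded before the statement.
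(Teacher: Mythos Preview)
The paper does not give its own proof of this lemma; it is quoted from \cite[Lemma~3.4]{Lan1} and used as a black box. So there is no in-paper argument to compare with. Your outline --- decompose $\mathcal{C}$ into $\Z_n$-orbits and bound the number of orbits by a packing count on ``coloured differences'' --- is the natural approach and is essentially what the original reference does.

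One concrete step in your sketch is wrong, however. You assert that for $n\equiv 2\pmod 6$ short orbits (those with stabiliser $\{0,n/2\}$) are ``ruled out'' by an internal distance bound. They are not: for $n=8$ the codeword $B=\{(0,1),(4,1),(1,2),(5,2)\}$ satisfies $B+4=B$ and $d_H(B,B+t)\geq 6$ for $t=1,2,3$, so a genuine short orbit exists. What actually makes the bound $C(n)=n\lfloor(n-1)/6\rfloor$ hold without the extra $n/2$ in this residue class is a budget argument, not an exclusion: at each $d\in\{1,\dots,n-1\}$ at most two mutually complementary colour pairs $(a,b)\in\{1,2\}^2$ can occur as a directed coloured difference anywhere in the code (this is the correct ``granularity'' you were worried about --- ``at most once'' is too weak and would lose a factor of~$2$), giving $2(n-1)$ slots in total; a full-orbit base block consumes $12$ of them; a short-orbit representative $\{(i,1),(i+n/2,1),(j,2),(j+n/2,2)\}$ occupies both slots at $d=n/2$ and one slot at each of four further distances, six in all. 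For $n=6k+2$ this leaves at most $12k-4$ slots for full orbits, hence at most $k-1$ of them, and $(k-1)n+n/2<kn=C(n)$. The same count for $n\equiv 0,4\pmod 6$ shows that there a single short orbit \emph{can} coexist with $\lfloor(n-1)/6\rfloor$ full orbits, which is exactly the source of the extra $n/2$ term. So the repair is not to exclude short orbits when $n\equiv 2\pmod 6$, but to show they are strictly suboptimal there.
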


By \cite[Construction 5.3]{Lan1}, if there exists a $(gh,h,4,1)$-CDF with $h\in\{1,2,3\}$, then there exists an optimal cyclic $(gh,6,4)_3$ code with $2gh\lfloor(gh-1)/12\rfloor=gh(gh-h)/6$ codewords. It follows from Theorems \ref{thm:cyclic design k=4} and \ref{thm:main} that an optimal cyclic $(n,6,4)_3$ code with $C(n)$ codewords exists for any $n\equiv 1,2,3\pmod{12}$ and $n\not\in\{25,27\}$. For $n\in\{25,27\}$, an optimal cyclic $(n,6,4)_3$ code with $C(n)$ codewords exists by \cite[Lemma 3.5]{Lan1}. By \cite[Theorems 5.7 and 5.8]{Lan1}, there exists an optimal cyclic $(n,6,4)_3$ code with $C(n)$ codewords for any $n\equiv 6\pmod{12}$ or $n\equiv 0\pmod{24}$. Therefore, we have the following result.

\begin{Theorem}\label{thm:cyclic code}
There exists an optimal cyclic $(n,6,4)_3$ code with $C(n)$ codewords for any positive integer $n\equiv 1,2,3,6\pmod{12}$ or $n\equiv 0\pmod{24}$.
\end{Theorem}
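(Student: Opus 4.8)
The plan is to assemble Theorem~\ref{thm:cyclic code} by case analysis on the residue of $n$ modulo $12$ (together with the finer distinction modulo $24$ in the case $n\equiv 0\pmod{12}$), in each case quoting an appropriate construction and matching the resulting number of codewords against the upper bound $C(n)$ from Lemma~\ref{lem-5-1}. First I would record the key translation: by \cite[Construction 5.3]{Lan1}, a $(gh,h,4,1)$-CDF with $h\in\{1,2,3\}$ produces a cyclic $(gh,6,4)_3$ code with exactly $gh(gh-h)/6$ codewords. A short arithmetic check shows that when $n\equiv h\pmod{12}$ with $h\in\{1,2,3\}$ one has $n(n-h)/6 = 2n\lfloor(n-1)/6\rfloor/2 = n\lfloor(n-1)/6\rfloor = C(n)$, since $n$ is then odd or $\equiv 2\pmod 6$, so such a code is automatically optimal.

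Next I would dispatch the congruence classes $n\equiv 1,2,3\pmod{12}$. For $n\equiv 1\pmod{12}$ apply Theorem~\ref{thm:cyclic design k=4} (the $h=1$ case) to get a $(n,1,4,1)$-CDF whenever $n\neq 25$; for $n\equiv 2\pmod{12}$ and $n\equiv 3\pmod{12}$ apply Theorem~\ref{thm:main} (the $h=2$ and $h=3$ cases) to get the relevant CDF whenever $n\notin\{25\}$ respectively $n\notin\{27\}$ — the exceptional small values are exactly $(g,h)=(25,1)$, and the forbidden pair $(9,3)$ gives $n=27$. For the leftover values $n=25$ and $n=27$ I would cite \cite[Lemma 3.5]{Lan1}, which supplies optimal codes with $C(n)$ codewords directly. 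Finally, for $n\equiv 6\pmod{12}$ and $n\equiv 0\pmod{24}$ I would simply invoke \cite[Theorems 5.7 and 5.8]{Lan1}, which already establish optimality with $C(n)$ codewords in those classes (note that in the $n\equiv 6\pmod{12}$ case $C(n)=n\lfloor(n-1)/6\rfloor+n/2$ because $6\equiv 0\pmod 6$, so a bare CDF-based count does not suffice there, which is why the separate results of \cite{Lan1} are needed). Collecting these cases covers precisely $n\equiv 1,2,3,6\pmod{12}$ and $n\equiv 0\pmod{24}$, completing the proof.

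The only genuinely delicate point is bookkeeping the exceptional pairs: one must verify that the exceptions $(g,h)\in\{(25,1),(4,4),(7,4),(9,3),(5,6)\}$ and $g\ge 4$ appearing in Theorems~\ref{thm:cyclic design k=4} and~\ref{thm:main} translate, under $n=gh$ with $h\in\{1,2,3\}$, into exactly the two missing values $n\in\{25,27\}$ (the $h=4$ and $h=6$ exceptions are irrelevant here since we only use $h\le 3$, and the constraint $g\ge 4$ forces $n\ge 4h$, which is below the range where anything new happens), and that those two values are then recovered from \cite[Lemma 3.5]{Lan1}. The remaining arithmetic — checking $gh(gh-h)/6=C(gh)$ in each relevant residue class — is routine and I would relegate it to a one-line computation in the write-up.
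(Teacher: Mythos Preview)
Your proposal is correct and follows essentially the same route as the paper: use \cite[Construction~5.3]{Lan1} together with Theorems~\ref{thm:cyclic design k=4} and~\ref{thm:main} for $n\equiv 1,2,3\pmod{12}$, patch $n\in\{25,27\}$ via \cite[Lemma~3.5]{Lan1}, and quote \cite[Theorems~5.7 and~5.8]{Lan1} for $n\equiv 6\pmod{12}$ and $n\equiv 0\pmod{24}$. One small write-up point: for $h=2$ there is in fact no exceptional value (the exclusion ``$n\notin\{25\}$'' is vacuous there), so you should just say that the $h=2$ case of Theorem~\ref{thm:main} covers all $n\equiv 2\pmod{12}$ with $n\geq 14$, the value $n=2$ being trivial since $C(2)=0$.
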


\subsection{Optimal optical orthogonal codes}\label{sec-5-3}

The study of optical orthogonal codes was first motivated by an application in a fiber optic code-division multiple access channel which requires binary sequences with good correlation properties (cf. \cite{csw}).

Let $v$ and $k$ be positive integers. A $(v, k, 1)$ {\em optical orthogonal code} (briefly $(v, k, 1)$-OOC), ${\cal C}$, is a family of $(0, 1)$-sequences (called {\em codewords}) of {\em length} $v$ and {\em weight} $k$ satisfying the following two properties (all subscripts are reduced modulo $v$):
\begin{enumerate}
\item[(1)]
${\sum}_{0 \le t \le v-1}x_tx_{t+i} \le 1$ for any ${\bf x} = (x_0, x_1, \ldots, x_{v-1}) \in {\cal C}$ and any integer $i\not\equiv 0 \pmod{v}$ ({\em the auto-correlation property});
\item[(2)]
${\sum}_{0 \le t \le v-1}x_ty_{t+i} \le 1$ for any ${\bf x} = (x_0, x_1,$ $ \ldots,$ $ x_{v-1}) \in {\cal C}$, ${\bf y} = (y_0, y_1, \ldots, y_{v-1}) \in {\cal C}$ with ${\bf x} \neq {\bf y}$, and any integer $i$ ({\em the cross-correlation property}).
\end{enumerate}

A convenient way of viewing OOCs is from a set-theoretic perspective. A $(v,k,1)$-OOC can be defined as a set of $k$-subsets (corresponding to codewords) of $\mathbb{Z}_v$ whose list of differences does not contain repeated elements (cf. \cite[Theorem 2.1]{Yin1}). A $(v,k,1)$-OOC is said to be {\em J-optimal} if the size of the set of missing differences (in $\Z_v$) of this OOC is less than or equal to $k(k-1)$, and hence a J-optimal $(v,k,1)$-OOC contains $\lfloor \frac{v-1}{k\left(k-1\right)}\rfloor$ codewords. $\lfloor \frac{v-1}{k\left(k-1\right)}\rfloor$ is an upper bound of the number of codewords of a $(v,k,1)$-OOC, and is often referred to as {\em the Johnson bound}.

\begin{Theorem}\label{thm:OOC}
There exists a J-optimal $(v,4,1)$-OOC for any positive integer $v\equiv 1,2,3,4,6\pmod{12}$ or $v\equiv 0\pmod{24}$ with the only definite exception of $v=25$.
\end{Theorem}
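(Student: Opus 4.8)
The plan is to reduce the existence of a J-optimal $(v,4,1)$-OOC to the existence of a $(gh,h,4,1)$-CDF for a suitable small $h$, exactly as was done for the cyclic constant-weight codes in Theorem \ref{thm:cyclic code}. Recall that a $(v,4,1)$-OOC, viewed set-theoretically, is a collection of $4$-subsets of $\Z_v$ whose difference list has no repeats; it is J-optimal precisely when the set of missing differences has size at most $12$. First I would split according to the residue of $v$ modulo $12$ and, in each case, choose $h$ to be the natural divisor of $v$ forced by the arithmetic: $h=1$ when $v\equiv 1\pmod{12}$, $h=2$ when $v\equiv 2\pmod{12}$, $h=3$ when $v\equiv 3\pmod{12}$, $h=4$ when $v\equiv 4\pmod{12}$, and $h=6$ when $v\equiv 6\pmod{12}$. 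In every case one has $v\equiv h\pmod{12}$, so by Theorem \ref{thm:cyclic design k=4} (for $h\in\{1,4\}$) or Theorem \ref{thm:main} (for $h\in\{2,3,6\}$) a $(v,h,4,1)$-CDF exists, with the finitely many listed exceptions $(g,h)\in\{(25,1),(4,4),(7,4),(9,3),(5,6)\}$ and the requirement $g\ge 4$.

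The key observation is that the base blocks of a $(v,h,4,1)$-CDF, together with one extra short block realizing the subgroup $h\Z_v$ of order $v/h$, form a $(v,4,1)$-OOC. Indeed, the base blocks of the CDF cover every difference in $\Z_v\setminus (h\Z_v)$ exactly once; adding the single block $\{0, h, 2h, \ldots\}$—or rather a suitably chosen $4$-subset of the subgroup of order $v/h$—produces some further differences, all lying in $h\Z_v$, each at most once if the subgroup is large enough (which it is once $g\ge 4$, so $v/h\ge 4$). What remains uncovered is then a subset of $h\Z_v$, whose size is $v/h$ minus the number of nonzero differences contributed by the extra block, i.e. at most $v/h - 12$ new misses beyond... more carefully: the total number of missing differences is $|h\Z_v|-1 - |\Delta(\text{extra block})| = v/h - 1 - 12$ when $v/h\ge$ something, which needs to be checked to be $\le 12$. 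Actually the cleanest route, and the one I would take, is to cite \cite[Theorem 2.1]{Yin1} or \cite[Construction 5.3]{Lan1}: a $(gh,h,4,1)$-CDF with $h\in\{1,2,3,4,6\}$ yields a J-optimal $(gh,4,1)$-OOC directly, since the missing differences are confined to the order-$(v/h)$ subgroup and one additional block inside that subgroup brings the count of misses down to at most $k(k-1)=12$. For $v\equiv 0\pmod{24}$ I would instead invoke the already-established constant-weight code results or the direct OOC constructions in the literature, as was done for Theorem \ref{thm:cyclic code}.

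The one genuine obstacle is the handful of small exceptional orders. For $v=25$ no J-optimal OOC exists—this is the stated exception and follows from the nonexistence of the $(25,1,4,1)$-CDF together with a short direct argument (a $(25,4,1)$-OOC with $\lfloor 24/12\rfloor=2$ codewords would force a near-perfect difference configuration that one rules out by hand). For the other CDF-exceptions $v\in\{16,28,9,30\}$ one must supply ad hoc J-optimal OOCs: for $v=16$ and $v=28$ (the $h=4$ exceptions) and $v=9$ (an $h=3$ exception) the Johnson bound is small—$\lfloor(v-1)/12\rfloor$ equals $1$ or $2$—so a direct search or a known small construction works; similarly $v=30$ ($h=6$, $g=5$). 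Thus the main work is not the general reduction, which is routine given Theorems \ref{thm:cyclic design k=4} and \ref{thm:main}, but verifying the bookkeeping on missing differences in the reduction and patching the five or six small orders by explicit examples or nonexistence arguments. I would present the general case in one or two lines citing the CDF theorems and the OOC-from-CDF construction, then dispatch $v=25$ as the exception and list explicit codewords for the remaining small cases.
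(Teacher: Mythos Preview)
Your overall strategy---reduce to the existence of a $(gh,h,4,1)$-CDF for $h\in\{1,2,3,4,6\}$ via Theorems~\ref{thm:cyclic design k=4} and~\ref{thm:main}, then handle the small exceptions and the class $v\equiv 0\pmod{24}$ by citation---is exactly what the paper does. But the mechanism you describe for passing from a CDF to a J-optimal OOC is muddled, and this leads to an unnecessary complication and to an incorrect list of small cases.

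The confusion is about which subgroup is ``missing'' in a $(gh,h,4,1)$-CDF. In the paper's notation the relevant subgroup is $g\Z_{gh}$, which has order $h$, not $h\Z_v$ of order $v/h=g$ as you write. Consequently the base blocks of the CDF already cover every nonzero difference except the $h-1$ nonzero elements of a subgroup of order $h\le 6$. Since $h-1\le 5\le 12$, the CDF \emph{is} a J-optimal OOC with no extra block needed: it has exactly $(gh-h)/12=\lfloor (gh-1)/12\rfloor$ codewords. Your attempt to adjoin a block inside a large subgroup and then balance the missing-difference count is working against a non-problem; once the subgroup is seen to be small, the reduction is a one-line observation.

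This same mix-up shows in your list of residual orders. The CDF exceptions translate to $v\in\{16,18,25,27,28,30\}$: here $27$ comes from $(g,h)=(9,3)$ (not $v=9$, which is not even in the stated congruence classes), and $18$ comes from the requirement $g\ge 4$ when $h=6$. The paper then disposes of $\{16,18,27,28,30\}$ and the nonexistence at $v=25$ by citing \cite{ab04}, and the class $v\equiv 0\pmod{24}$ by citing \cite{CFM}; you should do the same rather than sketch ad hoc searches.
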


\begin{proof}
Since a $(gh,h,4,1)$-CDF with $h\in\{1,2,3,4,6\}$ contains $\frac{gh-h}{12}=\lfloor\frac{gh-1}{12}\rfloor$ base blocks, every CDF from Theorems \ref{thm:cyclic design k=4} and \ref{thm:main} implies the existence of a J-optimal $(gh,4,1)$-OOC. Therefore, there exists a J-optimal $(v,4,1)$-OOC for any $v\equiv 1,2,3,4,6\pmod{12}$ and $v\not\in\{16,18,25,27,28,30\}$. By \cite[Theorem 1.6]{ab04}, there exists a J-optimal $(v,4,1)$-OOC for $v\in\{16,18,27,28,30\}$ and there is no J-optimal $(25,4,1)$-OOC. By \cite[Theorem 6.12]{CFM}, there exists a J-optimal $(v,4,1)$-OOC for any $v\equiv 0\pmod{24}$.
\end{proof}

We remark that Ge and Yin \cite[Theorem 6.5]{gy} also established the existence of a J-optimal $(v,4,1)$-OOC for any $v\equiv 6 \pmod{12}$ by using different construction methods. For more information on J-optimal $(v,4,1)$-OOC, the reader is referred to \cite{MBuratti2,b18,CM,ChangYin04,KChen,YMiao,fmi,gsm,Yin1}.

\section{Concluding remarks}\label{conclusion}

By utilizing the method presented in \cite{Zhang1}, we established the existence of $(gh,h,4,1)$-CDFs with $h\in\{2,3,6\}$ and $gh\equiv h\pmod{12}$ in Theorem \ref{thm:main}. It shows that the method is flexible and powerful for constructing many kinds of cyclic designs with block size four even though it looks mysterious.

Theorem \ref{thm:RDF-general} provides more new infinite families of $(gh,h,4,1)$-CDFs by using a standard recursive construction. Completing the existence of $(gh,h,4,1)$-CDFs for all admissible parameters $g$ and $h$ will be a complicated and difficult work.

In order to facilitate the reader for further study, we here summarize main known results on the existence of $(gh,h,4,1)$-CDFs not covered by Theorems \ref{thm:cyclic design k=4} and \ref{thm:RDF-general} below.

\begin{Remark}
\begin{enumerate}
\item[$(1)$] {\rm \cite[Theorem 3.9]{CFM}}
There exists a $(gh,h,4,1)$-CDF for $h\in\{18,60,72,96$, $108\}$ and all $g$ such that $\gcd(g,150)=1$ or $25$.
\item[$(2)$] {\rm \cite[Lemmas 3.4, 3.5, 3.10 and 3.11]{CFM}} There exists a $(ph,h,4,1)$-CDF for $h\in\{36,48\}$ and any prime $p>5$.
\item[$(3)$] {\rm \cite[Lemma 6.3]{gy}}
Let $g$ be a positive integer whose prime factors are all congruent to $1$ modulo $4$ and greater than $5$. Then there exists a $(12g,12,4,1)$-CDF.
\item[$(4)$] {\rm \cite[Lemmas 5.2 and 5.3]{gy}}
There exists a $(24\cdot3^{2t}w,24,4,1)$-CDF for any integer $t\geq 0$ and all $w$ such that $\gcd(w,6)=1$.
\item[$(5)$] {\rm \cite[Lemma 4.8]{CM}} There exists a $(2^{s+4},2^s,4,1)$-CDF for any integer $s\geq3$.
\item[$(6)$] {\rm \cite[Lemma 4.4]{ChangYin04}} Let $s\in\{3,4,5,6\}$. There exists a $(2^{4n+s},2^s,4,1)$-CDF for any positive integer $n$.
\item[$(7)$] {\rm \cite[Theorem 4.16]{ChangYin04}}  Let $s\in\{3,4,5,6\}$. There exists a $(2^sp,2^s,4,1)$-CDF for any prime $p\equiv1 \pmod{6}$ except possibly for $(p,s)=(7,6)$.
\end{enumerate}
\end{Remark}

\appendix
\setcounter{equation}{0}
\renewcommand\theequation{A.\arabic{equation}}

\section{The remaining $18+x$ base blocks in Lemma \ref{lem-RCDF-main}}\label{Sec:A}

\begin{center}\tabcolsep 0.03in
\begin{longtable}{|lll|}\hline
$h=2$, $x=0$ and $t$ is odd &&\\\hline
$\{0,1,11t-2,42t\}$&$\{0,2,3t-1,11t+1\}$&
$\{0,\frac{3t-3}{2},23t-1,44t\}$\\$\{0,3t,33t+1,60t+2\}$&
$\{0,3t+1,7t+1,19t+3\}$&$\{0,\frac{9t+1}{2},8t+1,\frac{141t+5}{2}\}$\\
$\{0,5t,21t,28t\}$&$\{0,5t+1,\frac{55t+1}{2},\frac{129t+3}{2}\}$&
$\{0,6t-1,19t+1,35t\}$\\$\{0,7t-2,35t-1,52t\}$&$\{0,8t,33t,58t+2\}$&
$\{0,\frac{19t-1}{2}\frac{83t-1}{2},53t\}$\\
$\{0,11t,34t+1,58t+1\}$&$\{0,12t+1,15t-1,39t\}$&
$\{0,\frac{31t-1}{2}\frac{71t+1}{2},\frac{107t+3}{2}\}$\\
$\{0,16t+1,\frac{35t+3}{2},42t+2\}$&
$\{0,\frac{33t+3}{2},26t,65t+3\}$&$\{0,19t,32t+1,36t+2\}$\\\hline

$h=2$, $x=0$ and $t$ is even &&\\\hline
$\{0,1,8t+1,11t-2\}$&$\{0,2,\frac{19t}{2}+2,28t+2\}$&
$\{0,\frac{3t}{2}+1,3t+1,38t+2\}$\\$\{0,\frac{3t}{2}+2,\frac{25t}{2}+1,\frac{109t}{2}+1\}$&
$\{0,3t-1,29t+1,42t+2\}$&$\{0,4t,21t+1,53t+1\}$\\
$\{0,4t+1,45t+1,51t+2\}$&$\{0,5t,11t,64t\}$&
$\{0,5t+1,16t+2,32t+1\}$\\$\{0,7t-2,35t-1,55t\}$&
$\{0,7t-1,23t,65t+1\}$&$\{0,\frac{19t}{2}+1,33t+2,57t+3\}$\\
$\{0,12t,26t+1,\frac{85t}{2}+2\}$&$\{0,12t+2,15t,54t+1\}$&
$\{0,\frac{31t}{2}-1,23t-1,60t+1\}$\\$\{0,20t+2,36t+2,39t+2\}$&
$\{0,\frac{43t}{2}+1,25t+2,50t+2\}$&$\{0,\frac{45t}{2}-1,27t-1,34t-1\}$\\\hline

$h=2$, $x=1$ and $t$ is odd &&\\\hline
$\{0,1,3t-2,15t\}$&$\{0,2,7t+3,37t+11\}$&
$\{0,\frac{3t-3}{2},\frac{19t+7}{2},\frac{101t+21}{2}\}$\\
$\{0,\frac{3t+1}{2},\frac{25t+3}{2},\frac{141t+29}{2}\}$&
$\{0,3t-1,18t+1,47t+7\}$&$\{0,\frac{7t+1}{2},13t+2,59t+13\}$\\
$\{0,4t,19t+4,52t+10\}$&$\{0,4t+1,25t+5,52t+9\}$&
$\{0,\frac{9t+3}{2},\frac{31t+7}{2},\frac{85t+17}{2}\}$\\
$\{0,5t+2,\frac{55t+9}{2},37t+7\}$&
$\{0,7t,\frac{71t+13}{2},53t+8\}$&$\{0,7t+4,\frac{61t+15}{2},55t+13\}$\\
$\{0,\frac{15t+7}{2},19t+5,61t+14\}$&$\{0,8t+3,32t+6,36t+8\}$&
$\{0,8t+4,35t+6,42t+8\}$ \\
$\{0,12t+1,18t+3,51t+11\}$&
$\{0,14t,19t+3,42t+7\}$&$\{0,15t+3,18t+4,33t+5\}$\\
$\{0,17t+2,23t+3,67t+13\}$&&\\\hline

$h=2$, $x=1$ and $t$ is even &&\\\hline
$\{0,1,13t+2,17t+2\}$&$\{0,\frac{3t}{2},3t+1,18t+1\}$&
$\{0,\frac{3t}{2}+2,\frac{31t}{2}+3,\frac{109t}{2}+13\}$\\$\{0,3t-3,11t+2,64t+10\}$&
$\{0,\frac{7t}{2}+1,48t+11,51t+10\}$&$\{0,4t+1,24t+6,61t+14\}$\\
$\{0,6t+2,24t+4,50t+11\}$&$\{0,6t+3,30t+8,53t+10\}$&
$\{0,7t+1,\frac{71t}{2}+7,45t+12\}$\\$\{0,7t+4,19t+5,37t+9\}$&
$\{0,\frac{15t}{2}+3,20t+4,57t+15\}$&$\{0,8t+3,35t+7,47t+9\}$\\
$\{0,\frac{19t}{2}+4,\frac{47t}{2}+4,\frac{61t}{2}+7\}$&$\{0,15t+2,30t+6,45t+9\}$&
$\{0,18t+3,41t+7,44t+7\}$\\$\{0,\frac{43t}{2}+4,33t+5,40t+7\}$&
$\{0,23t+3,28t+4,28t+6\}$&$\{0,29t+6,32t+4,36t+6\}$\\
$\{0,\frac{59t}{2}+5,34t+6,39t+8\}$ &&\\\hline

$h=2$, $x=2$ and $t$ is odd &&\\\hline
$\{0,1,8t+8,\frac{19t+13}{2}\}$&$\{0,2,3t,35t+10\}$&
$\{0,\frac{3t-1}{2},\frac{19t+9}{2},26t+9\}$\\
$\{0,\frac{3t+1}{2},6t+3,\frac{101t+41}{2}\}$&
$\{0,\frac{7t+5}{2},\frac{61t+23}{2},\frac{87t+31}{2}\}$&$\{0,4t+2,28t+11,56t+21\}$\\
$\{0,4t+3,19t+7,42t+17\}$&
$\{0,5t+2,8t+3,57t+20\}$&$\{0,5t+4,41t+15,44t+17\}$\\$\{0,7t+2,14t+3,22t+5\}$&
$\{0,7t+3,24t+10,37t+15\}$&$\{0,8t+4,25t+10,37t+14\}$\\
$\{0,8t+6,11t+3,23t+8\}$&$\{0,12t+2,17t+5,34t+9\}$&
$\{0,12t+3,30t+10,33t+9\}$\\$\{0,\frac{31t+11}{2},27t+8,58t+22\}$&
$\{0,\frac{35t+13}{2},30t+11,53t+18\}$&$\{0,18t+6,33t+11,36t+14\}$\\
$\{0,\frac{37t+15}{2},26t+10,\frac{99t+39}{2}\}$&$\{0,22t+6,28t+8,47t+17\}$&\\\hline

$h=2$, $x=2$ and $t$ is even &&\\\hline
$\{0,1,8t+8,\frac{19t}{2}+8\}$&$\{0,\frac{3t}{2}+1,6t+3,44t+18\}$&
$\{0,\frac{3t}{2}+2,18t+6,55t+21\}$\\$\{0,3t-3,22t+6,33t+9\}$&
$\{0,3t-2,22t+5,47t+15\}$&$\{0,3t-1,7t+2,15t+4\}$\\
$\{0,3t,27t+9,44t+15\}$&$\{0,3t+1,\frac{31t}{2}+5,39t+15\}$&
$\{0,3t+2,8t+6,20t+9\}$\\$\{0,4t+2,21t+6,36t+11\}$&
$\{0,5t+3,18t+7,42t+17\}$&$\{0,6t+4,\frac{57t}{2}+10,40t+12\}$\\
$\{0,7t+1,25t+9,30t+11\}$&$\{0,\frac{15t}{2}+2,\frac{55t}{2}+8,\frac{71t}{2}+12\}$&
$\{0,8t+3,15t+6,58t+22\}$\\$\{0,8t+5,31t+14,57t+24\}$&
$\{0,\frac{19t}{2}+6,\frac{49t}{2}+9,\frac{101t}{2}+20\}$&$\{0,12t+2,12t+4,49t+20\}$\\
$\{0,13t+5,27t+8,\frac{61t}{2}+11\}$&$\{0,16t+5,19t+8,49t+18\}$&\\\hline

$h=2$, $x=3$ and $t$ is odd &&\\\hline
$\{0,1,\frac{3t-1}{2},64t+29\}$&$\{0,\frac{3t+1}{2},25t+14,56t+29\}$&
$\{0,3t-2,41t+20,60t+31\}$\\$\{0,3t+3,42t+23,58t+31\}$&
$\{0,4t+3,32t+16,69t+39\}$&$\{0,\frac{9t+7}{2},\frac{15t+11}{2},58t+32\}$\\
$\{0,5t+3,34t+17,42t+24\}$&$\{0,5t+4,21t+11,52t+25\}$&
$\{0,6t+3,\frac{85t+49}{2},\frac{99t+55}{2}\}$\\$\{0,6t+4,\frac{19t+13}{2},21t+12\}$&
$\{0,7t+6,24t+15,54t+28\}$&$\{0,8t+5,27t+14,32t+19\}$\\
$\{0,8t+6,12t+8,31t+20\}$&$\{0,8t+8,19t+14,69t+41\}$&
$\{0,\frac{19t+15}{2},25t+15,\frac{111t+59}{2}\}$\\$\{0,12t+5,19t+10,36t+18\}$&
$\{0,\frac{25t+15}{2},\frac{55t+27}{2},\frac{95t+47}{2}\}$&$\{0,13t+7,31t+16,44t+24\}$\\
$\{0,21t+9,33t+15,36t+16\}$&$\{0,23t+11,23t+13,46t+23\}$&
$\{0,29t+13,33t+17,36t+17\}$\\\hline

$h=2$, $x=3$ and $t$ is even &&\\\hline
$\{0,1,8t+8,8t+10\}$&$\{0,\frac{3t}{2}+1,31t+14,\frac{71t}{2}+17\}$& $\{0,\frac{3t}{2}+2,\frac{19t}{2}+8,57t+32\}$\\
$\{0,3t-3,\frac{25t}{2}+7,35t+17\}$&$\{0,3t-1,23t+11,30t+13\}$
&$\{0,3t+1,35t+16,\frac{89t}{2}+25\}$\\
$\{0,3t+2,21t+11,44t+24\}$&$\{0,\frac{7t}{2}+3,15t+8,21t+12\}$&
$\{0,4t+4,19t+9,43t+24\}$\\
$\{0,5t+3,36t+18,61t+32\}$&
$\{0,5t+5,\frac{47t}{2}+14,52t+27\}$&$\{0,6t+5,\frac{15t}{2}+5,\frac{113t}{2}+31\}$\\
$\{0,7t+3,36t+16,53t+24\}$&$\{0,7t+4,19t+11,22t+11\}$&
$\{0,12t+5,16t+8,24t+13\}$\\
$\{0,13t+7,17t+9,36t+21\}$&
$\{0,13t+8,44t+25,47t+23\}$&$\{0,14t+6,35t+15,47t+21\}$\\
$\{0,14t+7,17t+10,56t+31\}$& $\{0,19t+10,24t+14,65t+32\}$&
$\{0,23t+10,30t+15,41t+20\}$\\\hline

$h=2$, $x=4$ and $t$ is odd & & \\\hline
$\{0,\frac{3t-3}{2},14t+10,44t+31\}$&$\{0,\frac{3t+1}{2},23t+16,56t+39\}$&
$\{0,3t-3,3t-1,45t+30\}$\\$\{0,3t-2,8t+5,27t+17\}$&
$\{0,3t,35t+23,68t+45\}$&$\{0,3t+1,\frac{89t+61}{2},54t+38\}$\\
$\{0,4t,23t+14,46t+32\}$&$\{0,4t+1,12t+11,24t+18\}$&
$\{0,4t+3,11t+7,64t+41\}$\\$\{0,4t+4,22t+15,55t+39\}$&
$\{0,6t+6,37t+28,37t+29\}$&$\{0,8t+7,\frac{23t+15}{2},55t+38\}$\\
$\{0,\frac{19t+13}{2},\frac{47t+35}{2},\frac{113t+77}{2}\}$&$\{0,12t+9,27t+19,\frac{73t+55}{2}\}$&
$\{0,12t+10,31t+21,36t+27\}$\\$\{0,13t+12,34t+23,60t+42\}$&
$\{0,15t+9,23t+17,34t+22\}$&$\{0,15t+11,\frac{33t+21}{2},21t+16\}$\\
$\{0,17t+10,32t+22,36t+28\}$&$\{0,20t+12,27t+18,31t+20\}$&
$\{0,\frac{45t+29}{2},30t+20,41t+26\}$\\
$\{0,23t+15,28t+20,36t+26\}$&&\\\hline

$h=2$, $x=4$ and $t$ is even &&\\\hline
$\{0,1,\frac{19t}{2}+10,39t+27\}$&$\{0,2,3t+1,45t+33\}$&
$\{0,\frac{3t}{2},\frac{37t}{2}+11,\frac{97t}{2}+32\}$\\$\{0,3t,\frac{15t}{2}+5,\frac{31t}{2}+11\}$&
$\{0,\frac{7t}{2}+1,\frac{19t}{2}+8,\frac{89t}{2}+31\}$&$\{0,4t,28t+19,36t+24\}$\\
$\{0,4t+1,8t+7,12t+9\}$&$\{0,4t+5,15t+12,68t+46\}$&
$\{0,5t+5,21t+16,57t+39\}$\\$\{0,7t+4,19t+11,58t+39\}$&
$\{0,7t+5,12t+12,61t+44\}$&$\{0,7t+6,31t+23,66t+44\}$\\
$\{0,8t+10,11t+8,49t+36\}$&$\{0,11t+5,30t+19,60t+39\}$&
$\{0,\frac{23t}{2}+7,\frac{49t}{2}+18,55t+38\}$\\$\{0,12t+8,32t+21,47t+30\}$&
$\{0,14t+10,26t+20,47t+31\}$&$\{0,15t+10,19t+13,46t+31\}$\\
$\{0,\frac{33t}{2}+10,18t+12,\frac{109t}{2}+39\}$&$\{0,17t+10,36t+22,41t+28\}$&
$\{0,19t+18,22t+15,49t+35\}$\\$\{0,\frac{43t}{2}+16,23t+17,44t+30\}$&&\\\hline

$h=2$, $x=5$ and $t$ is odd &&\\\hline
$\{0,1,3t-2,15t+12\}$&$\{0,\frac{3t-3}{2},\frac{19t+17}{2},\frac{141t+123}{2}\}$&
$\{0,\frac{3t-1}{2},38t+33,\frac{135t+115}{2}\}$\\$\{0,3t,28t+25,36t+30\}$&
$\{0,3t+1,11t+7,53t+44\}$&$\{0,4t+5,25t+22,39t+34\}$\\
$\{0,5t+4,\frac{55t+47}{2},58t+49\}$&$\{0,5t+5,8t+7,64t+53\}$&
$\{0,7t+4,7t+6,48t+39\}$\\$\{0,7t+5,35t+28,40t+34\}$&
$\{0,\frac{15t+11}{2},\frac{23t+17}{2},\frac{125t+105}{2}\}$&$\{0,\frac{19t+15}{2},\frac{35t+31}{2},66t+57\}$\\
$\{0,12t+8,29t+24,64t+51\}$&$\{0,12t+9,\frac{49t+45}{2},\frac{57t+49}{2}\}$&
$\{0,12t+10,31t+30,53t+47\}$\\$\{0,12t+11,25t+24,46t+39\}$&
$\{0,12t+12,28t+24,48t+40\}$&$\{0,15t+13,31t+28,42t+36\}$\\
$\{0,16t+13,19t+16,43t+37\}$&$\{0,16t+14,39t+35,46t+38\}$&
$\{0,18t+16,22t+20,49t+42\}$\\$\{0,\frac{37t+31}{2},22t+18,40t+33\}$&
$\{0,24t+20,36t+33,47t+39\}$ &\\\hline

$h=2$, $x=5$ and $t$ is even &&\\\hline
$\{0,1,3t-2,3t\}$&$\{0,3t+1,23t+19,56t+47\}$&
$\{0,\frac{7t}{2}+3,\frac{37t}{2}+15,\frac{85t}{2}+38\}$\\$\{0,4t+2,8t+5,64t+51\}$&
$\{0,4t+5,12t+12,31t+29\}$&$\{0,5t+5,12t+10,50t+45\}$\\
$\{0,6t+5,28t+24,59t+49\}$&$\{0,7t+4,11t+8,43t+38\}$&
$\{0,\frac{15t}{2}+5,\frac{61t}{2}+25,\frac{97t}{2}+41\}$\\$\{0,8t+6,41t+35,58t+50\}$&
$\{0,8t+8,19t+15,56t+50\}$&$\{0,11t+6,41t+32,47t+38\}$\\
$\{0,11t+9,37t+34,58t+49\}$&$\{0,12t+9,29t+25,56t+48\}$&
$\{0,12t+14,19t+20,60t+54\}$\\$\{0,13t+14,\frac{49t}{2}+22,60t+51\}$&
$\{0,\frac{31t}{2}+12,25t+23,28t+25\}$&$\{0,16t+13,\frac{35t}{2}+15,67t+58\}$\\
$\{0,\frac{33t}{2}+15,18t+15,26t+24\}$&$\{0,21t+18,24t+21,57t+51\}$&
$\{0,22t+20,42t+37,\frac{87t}{2}+38\}$\\$\{0,23t+21,30t+24,38t+34\}$&
$\{0,\frac{55t}{2}+23,32t+27,37t+33\}$&\\\hline
&&\\\hline

$h=3$, $x=0$ and $t$ is odd &&\\\hline
$\{0,1,3t-2,11t-2\}$&$\{0,2,7t+1,19t+4\}$&
$\{0,\frac{3t-1}{2},32t+2,\frac{71t+3}{2}\}$\\$\{0,3t-1,16t+2,23t+2\}$&
$\{0,4t+1,21t+2,44t+3\}$&$\{0,4t+2,8t+1,46t+3\}$\\
$\{0,\frac{9t+3}{2},\frac{33t+3}{2},\frac{87t+3}{2}\}$&$\{0,5t+1,19t+3,30t+3\}$&	$\{0,5t+2,\frac{25t+5}{2},42t+3\}$\\$\{0,6t,18t+2,37t+3\}$&$\{0,6t+1,\frac{35t+3}{2},\frac{125t+7}{2}\}$&
$\{0,8t+2,\frac{19t+1}{2},31t+2\}$\\
$\{0,\frac{19t-3}{2},33t,59t+1\}$&$\{0,11t-1,28t+1,56t+3\}$&
$\{0,11t+1,33t+2,47t+3\}$\\
$\{0,12t+1,16t+1,37t+2\}$&
$\{0,15t+1,18t+1,57t+3\}$&$\{0,\frac{37t+3}{2},20t+2,\frac{95t+5}{2}\}$\\\hline
	
$h=3$, $x=0$ and $t$ is even &&\\\hline
	$\{0,1,8t+1,11t-2\}$&$\{0,2,7t+1,19t+4\}$&
	$\{0,\frac{3t}{2},32t+2,44t+3\}$,\\$\{0,\frac{3t}{2}+2,19t+3,30t+3\}$&
	$\{0,3t-2,7t,59t\}$&$\{0,3t-1,11t+1,36t+2\}$\\
	$\{0,3t,\frac{55t}{2},\frac{141t}{2}+2\}$,&$\{0,4t+1,18t+2,45t+2\}$&
	$\{0,5t+1,33t+2,37t+2\}$\\$\{0,5t+2,30t+2,56t+3\}$&
	$\{0,6t+1,28t+2,45t+4\}$&$\{0,6t+2,\frac{19t}{2}+2,\frac{113t}{2}+3\}$\\
	$\{0,\frac{15t}{2},\frac{45t}{2},\frac{119t}{2}+1\}$&$\{0,\frac{19t}{2}+1,19t+1,\frac{111t}{2}+2\}$&
	$\{0,11t-1,34t+1,49t+2\}$,\\$\{0,\frac{23t}{2},30t+1,60t+1\}$&
	$\{0,12t,35t,51t+2\}$&$\{0,14t+2,18t+1,31t+3\}$\\\hline

$h=3$, $x=1$ and $t$ is odd &&\\\hline
$\{0,1,3t-2,11t+3\}$&$\{0,\frac{3t-3}{2},\frac{45t+5}{2},\frac{119t+23}{2}\}$&
$\{0,\frac{3t-1}{2},\frac{19t+3}{2},\frac{49t+9}{2}\}$\\
$\{0,\frac{3t+1}{2},\frac{55t+13}{2},\frac{61t+13}{2}\}$&
$\{0,3t-1,7t,52t+10\}$&$\{0,4t,\frac{43t+7}{2},31t+7\}$\\
$\{0,4t+2,27t+4,39t+7\}$&$\{0,\frac{9t+3}{2},\frac{23t+7}{2},\frac{73t+17}{2}\}$&
$\{0,5t+1,41t+7,41t+9\}$\\
$\{0,6t+1,22t+3,67t+12\}$&
$\{0,7t+1,13t+3,30t+7\}$&$\{0,\frac{15t+3}{2},\frac{31t+5}{2},\frac{47t+11}{2}\}$\\
$\{0,8t+4,19t+4,38t+9\}$&$\{0,13t+4,\frac{33t+9}{2},46t+10\}$&
$\{0,14t+3,25t+4,30t+6\}$ \\
$\{0,15t+1,33t+5,53t+9\}$&
$\{0,16t+5,37t+8,49t+12\}$&$\{0,19t+3,\frac{57t+11}{2},47t+9\}$\\
$\{0,29t+5,32t+6,47t+8\}$&&\\\hline

$h=3$, $x=1$ and $t$ is even &&\\\hline
$\{0,2,\frac{3t}{2}+2,\frac{19t}{2}+5\}$&$\{0,\frac{3t}{2}+1,43t+10,49t+12\}$&
$\{0,3t-3,23t+2,60t+11\}$\\$\{0,4t+1,16t+4,51t+11\}$&
$\{0,5t+1,46t+8,54t+12\}$&$\{0,6t+3,\frac{19t}{2}+4,\frac{99t}{2}+13\}$\\
$\{0,7t+2,\frac{47t}{2}+6,53t+11\}$&$\{0,\frac{15t}{2}+1,25t+4,64t+14\}$&
$\{0,8t+2,13t+4,65t+15\}$\\$\{0,11t,19t+5,30t+6\}$&
$\{0,11t+2,27t+4,67t+12\}$&$\{0,11t+3,42t+10,45t+8\}$\\
$\{0,11t+4,30t+7,45t+10\}$&$\{0,\frac{23t}{2}+3,33t+7,56t+12\}$&
$\{0,\frac{37t}{2}+3,31t+6,\frac{71t}{2}+7\}$\\$\{0,21t+3,35t+5,39t+7\}$&
$\{0,22t+3,36t+6,49t+9\}$&$\{0,25t+6,25t+7,28t+6\}$\\
$\{0,29t+6,33t+6,36t+7\}$ &&\\\hline

$h=3$, $x=2$ and $t$ is odd &&\\\hline
$\{0,1,8t+8,\frac{19t+13}{2}\}$&$\{0,\frac{3t-1}{2},18t+6,38t+15\}$&
$\{0,3t-3,3t-1,60t+20\}$\\
$\{0,3t-2,35t+11,46t+16\}$&
$\{0,3t,26t+10,\frac{95t+35}{2}\}$&$\{0,3t+2,\frac{25t+13}{2},42t+16\}$\\
$\{0,3t+3,8t+6,19t+6\}$&
$\{0,\frac{7t+5}{2},\frac{37t+15}{2},\frac{87t+35}{2}\}$&$\{0,4t+2,11t+3,53t+20\}$\\$\{0,6t+3,25t+12,37t+17\}$&
$\{0,7t+3,21t+8,25t+11\}$&$\{0,12t+4,43t+17,51t+20\}$\\
$\{0,15t+3,23t+7,59t+21\}$&$\{0,16t+5,22t+7,58t+23\}$&
$\{0,16t+6,\frac{35t+13}{2},33t+12\}$\\$\{0,19t+8,24t+10,55t+20\}$&
$\{0,\frac{45t+13}{2},27t+9,34t+11\}$&$\{0,23t+6,28t+10,31t+11\}$\\
$\{0,23t+8,\frac{61t+21}{2},54t+20\}$&$\{0,25t+9,37t+15,45t+17\}$&\\\hline

$h=3$, $x=2$ and $t$ is even &&\\\hline
	$\{0,1,8t+6,8t+8\}$&$\{0,\frac{3t}{2},\frac{35t}{2}+6,\frac{101t}{2}+19\}$&
	$\{0,\frac{3t}{2}+1,\frac{37t}{2}+7,\frac{141t}{2}+25\}$\\$\{0,3t-3,29t+9,41t+14\}$&
	$\{0,3t-2,\frac{25t}{2}+6,35t+12\}$&$\{0,3t,19t+7,68t+24\}$\\
	$\{0,3t+1,\frac{61t}{2}+10,67t+25\}$&$\{0,\frac{7t}{2}+3,\frac{19t}{2}+7,\frac{113t}{2}+22\}$&
	$\{0,\frac{9t}{2}+2,\frac{19t}{2}+6,\frac{85t}{2}+18\}$\\$\{0,5t+3,8t+2,51t+19\}$&
	$\{0,8t+4,30t+11,49t+21\}$&$\{0,11t+3,36t+14,42t+17\}$\\
	$\{0,11t+4,27t+9,68t+25\}$&$\{0,11t+5,23t+9,36t+15\}$&
	$\{0,\frac{23t}{2}+4,\frac{47t}{2}+10,\frac{111t}{2}+21\}$\\$\{0,12t+7,27t+10,47t+18\}$&
	$\{0,14t+6,21t+7,58t+24\}$&$\{0,15t+4,18t+6,33t+11\}$\\
	$\{0,17t+7,\frac{49t}{2}+9,53t+18\}$&$\{0,18t+8,41t+15,48t+17\}$&\\\hline

$h=3$, $x=3$ and $t$ is odd &&\\\hline
$\{0,1,8t+10,\frac{19t+17}{2}\}$&$\{0,2,31t+15,64t+33\}$&
$\{0,\frac{3t-1}{2},\frac{33t+17}{2},\frac{113t+59}{2}\}$\\$\{0,\frac{3t+1}{2},23t+10,30t+14\}$&
$\{0,3t,23t+11,\frac{55t+29}{2}\}$&$\{0,3t+1,21t+10,26t+15\}$\\
$\{0,3t+3,36t+22,41t+25\}$&$\{0,\frac{7t+5}{2},11t+6,66t+35\}$&
$\{0,5t+4,13t+7,\frac{71t+35}{2}\}$\\$\{0,6t+3,14t+7,57t+33\}$&
$\{0,7t+2,32t+19,35t+16\}$&$\{0,3t-2,34t+18,53t+27\}$\\
$\{0,8t+5,11t+7,31t+17\}$&$\{0,11t+5,\frac{59t+29}{2},47t+24\}$&
$\{0,\frac{23t+13}{2},\frac{47t+27}{2},\frac{119t+63}{2}\}$\\$\{0,12t+6,15t+5,41t+21\}$&
$\{0,15t+8,19t+10,68t+36\}$&$\{0,16t+9,31t+16,55t+31\}$\\
$\{0,22t+9,35t+17,47t+25\}$&$\{0,22t+11,29t+14,37t+21\}$&
$\{0,27t+15,31t+19,45t+25\}$\\\hline

$h=3$, $x=3$ and $t$ is even &&\\\hline
$\{0,1,8t+10,\frac{19t}{2}+10\}$&$\{0,3t,27t+14,68t+36\}$& $\{0,3t+1,21t+10,52t+29\}$\\
$\{0,3t+3,8t+7,39t+20\}$&$\{0,4t+2,15t+8,72t+37\}$
&$\{0,5t+3,17t+9,36t+18\}$\\
$\{0,5t+5,19t+11,69t+41\}$&$\{0,7t+4,32t+20,\frac{99t}{2}+29\}$&
$\{0,\frac{15t}{2}+3,23t+12,49t+29\}$\\
$\{0,8t+6,\frac{49t}{2}+14,36t+20\}$&
$\{0,\frac{19t}{2}+8,\frac{89t}{2}+25,\frac{101t}{2}+29\}$&$\{0,11t+5,\frac{25t}{2}+7,17t+10\}$\\
$\{0,12t+8,15t+5,53t+27\}$&$\{0,13t+8,35t+18,60t+32\}$&
$\{0,14t+7,22t+11,29t+13\}$\\
$\{0,15t+7,33t+17,56t+30\}$&
$\{0,15t+9,38t+23,41t+25\}$&$\{0,20t+11,\frac{47t}{2}+14,59t+32\}$\\
$\{0,21t+9,29t+14,48t+24\}$& $\{0,\frac{57t}{2}+13,30t+14,47t+22\}$&
$\{0,29t+15,37t+23,40t+22\}$\\\hline

$h=3$, $x=4$ and $t$ is odd & & \\\hline
$\{0,\frac{3t-3}{2},3t-2,11t+7\}$&$\{0,3t+1,36t+24,39t+26\}$&
$\{0,\frac{7t+5}{2},\frac{83t+59}{2},\frac{89t+65}{2}\}$\\$\{0,4t+4,11t+6,29t+20\}$&
$\{0,6t+4,\frac{37t+31}{2},43t+32\}$&$\{0,7t+4,33t+21,60t+39\}$\\
$\{0,7t+5,47t+34,51t+36\}$&$\{0,7t+6,12t+9,67t+46\}$&
$\{0,\frac{15t+11}{2},25t+19,57t+42\}$\\$\{0,8t+5,25t+18,45t+34\}$&
$\{0,8t+8,\frac{73t+55}{2},38t+28\}$&$\{0,\frac{19t+13}{2},14t+10,31t+22\}$\\
$\{0,\frac{19t+15}{2},21t+16,65t+48\}$&$\{0,11t+5,30t+22,56t+40\}$&
$\{0,11t+8,19t+18,55t+41\}$\\$\{0,12t+8,30t+21,53t+36\}$&
$\{0,12t+10,15t+10,29t+21\}$&$\{0,13t+11,18t+15,72t+50\}$\\
$\{0,15t+11,23t+18,64t+45\}$&$\{0,\frac{31t+23}{2},\frac{85t+61}{2},\frac{97t+67}{2}\}$&
$\{0,20t+17,23t+14,23t+16\}$\\
$\{0,22t+15,33t+24,37t+27\}$&&\\\hline

$h=3$, $x=4$ and $t$ is even &&\\\hline
$\{0,1,3t-2,53t+34\}$&$\{0,2,\frac{19t}{2}+10,69t+50\}$&
$\{0,\frac{3t}{2},\frac{19t}{2}+9,\frac{137t}{2}+48\}$\\$\{0,3t-1,11t+9,64t+44\}$&
$\{0,3t+2,21t+15,60t+42\}$&$\{0,\frac{9t}{2}+3,41t+30,49t+36\}$\\
$\{0,5t+4,31t+23,36t+28\}$&$\{0,7t+2,15t+10,49t+33\}$&
$\{0,7t+3,26t+18,54t+36\}$\\$\{0,7t+5,30t+22,45t+34\}$&
$\{0,7t+6,11t+8,43t+32\}$&$\{0,\frac{15t}{2}+5,\frac{87t}{2}+32,55t+40\}$\\
$\{0,8t+5,35t+23,55t+39\}$&$\{0,11t+5,28t+19,60t+40\}$&
$\{0,12t+8,35t+22,47t+32\}$\\$\{0,13t+11,16t+11,55t+41\}$&
$\{0,15t+9,\frac{33t}{2}+10,58t+40\}$&$\{0,17t+13,21t+16,28t+20\}$\\
$\{0,\frac{35t}{2}+13,\frac{47t}{2}+18,39t+29\}$&$\{0,\frac{45t}{2}+14,41t+29,\frac{85t}{2}+31\}$&
$\{0,25t+16,39t+26,43t+30\}$\\$\{0,27t+19,33t+23,45t+35\}$&&\\\hline

$h=3$, $x=5$ and $t$ is odd &&\\\hline
$\{0,1,\frac{3t-1}{2},45t+40\}$&$\{0,\frac{3t+1}{2},\frac{33t+29}{2},\frac{125t+111}{2}\}$&
$\{0,3t-1,35t+29,39t+32\}$\\$\{0,3t+1,21t+16,64t+54\}$&
$\{0,3t+3,41t+35,57t+50\}$&$\{0,\frac{7t+5}{2},\frac{55t+49}{2},58t+50\}$\\
$\{0,4t+4,22t+21,61t+52\}$&$\{0,4t+5,12t+13,15t+11\}$&
$\{0,5t+5,8t+5,8t+7\}$\\$\{0,5t+6,29t+26,41t+38\}$&
$\{0,6t+4,39t+34,56t+49\}$&$\{0,\frac{15t+9}{2},\frac{43t+33}{2},\frac{125t+107}{2}\}$\\
$\{0,8t+6,30t+26,49t+42\}$&$\{0,8t+10,19t+19,69t+66\}$&
$\{0,\frac{23t+21}{2},27t+25,\frac{73t+67}{2}\}$\\$\{0,12t+10,41t+32,49t+43\}$&
$\{0,12t+11,34t+30,41t+34\}$&$\{0,12t+14,19t+17,39t+35\}$\\
$\{0,\frac{25t+27}{2},31t+30,36t+34\}$&$\{0,13t+13,21t+17,66t+58\}$&
$\{0,13t+14,17t+16,43t+39\}$\\$\{0,25t+21,\frac{59t+51}{2},\frac{99t+85}{2}\}$&
$\{0,25t+24,32t+29,47t+41\}$ &\\\hline

$h=3$, $x=5$ and $t$ is even &&\\\hline
$\{0,1,33t+29,41t+33\}$&$\{0,\frac{3t}{2},11t+11,45t+41\}$&
$\{0,\frac{3t}{2}+1,3t+3,50t+43\}$\\$\{0,3t-2,11t+9,15t+12\}$&
$\{0,3t+1,35t+29,\frac{119t}{2}+50\}$&$\{0,3t+2,39t+32,52t+45\}$\\
$\{0,4t+4,31t+29,34t+29\}$&$\{0,4t+5,15t+15,64t+57\}$&
$\{0,\frac{9t}{2}+4,\frac{19t}{2}+9,\frac{109t}{2}+48\}$\\$\{0,6t+5,14t+12,57t+52\}$&
$\{0,7t+4,26t+23,29t+22\}$&$\{0,7t+5,43t+37,48t+43\}$\\
$\{0,\frac{15t}{2}+4,17t+14,58t+50\}$&$\{0,8t+5,20t+17,25t+21\}$&
$\{0,8t+10,11t+7,47t+41\}$\\$\{0,\frac{23t}{2}+10,\frac{55t}{2}+24,\frac{85t}{2}+38\}$&
$\{0,13t+14,32t+31,50t+47\}$&$\{0,15t+13,\frac{37t}{2}+16,54t+46\}$\\
$\{0,16t+15,27t+21,27t+23\}$&$\{0,17t+15,21t+17,29t+25\}$&
$\{0,19t+18,25t+24,\frac{83t}{2}+38\}$\\$\{0,22t+17,30t+26,48t+41\}$&
$\{0,22t+21,29t+24,41t+37\}$&\\\hline
& & \\\hline

$h=6$, $x=0$ and $t$ is odd &&\\\hline
			$\{0,1,8t+1,11t-2\}$&$\{0,2,\frac{19t+1}{2},\frac{141t+15}{2}\}$&
			$\{0,\frac{3t-1}{2},\frac{33t+3}{2},44t+4\}$\\$\{0,3t-2,7t-2,27t-1\}$&
			$\{0,3t,\frac{43t+1}{2},\frac{119t+9}{2}\}$&$\{0,3t+1,22t+2,33t+3\}$\\
			$\{0,\frac{7t+1}{2},\frac{57t+5}{2},\frac{73t+9}{2}\}$&$\{0,4t+1,\frac{23t+3}{2},59t+5\}$&
			$\{0,5t+1,39t+4,46t+3\}$\\$\{0,6t-1,\frac{47t+1}{2},54t+5\}$&$\{0,7t,19t+2,49t+5\}$&$\{0,7t+1,37t+5,57t+7\}$\\
			$\{0,\frac{19t-1}{2},14t,59t+4\}$&$\{0,11t+2,28t+3,43t+4\}$&
			$\{0,14t+1,39t+5,50t+5\}$\\
			$\{0,16t+1,19t,56t+4\}$&
			$\{0,21t+1,27t+1,57t+6\}$&$\{0,21t,26t,45t+3\}$
			\\\hline

$h=6$, $x=0$ and $t$ is even &&\\\hline
			$\{0,1,3t-2,11t-2\}$&$\{0,2,27t+1,42t+3\}$&
			$\{0,\frac{3t}{2},30t+2,69t+6\}$,\\$\{0,\frac{3t}{2}+1,\frac{19t}{2}+2,57t+6\}$&
			$\{0,\frac{3t}{2}+2,11t+2,24t+3\}$&$\{0,\frac{7t}{2}+1,\frac{37t}{2},\frac{113t}{2}+5\}$\\
			$\{0,4t,7t-1,56t+4\}$&$\{0,4t+1,11t-1,39t+2\}$&
			$\{0,5t,24t,50t+4\}$\\$\{0,5t+1,29t+2,69t+5\}$&
			$\{0,6t,37t+4,\frac{97t}{2}+5\}$&$\{0,7t+1,28t+2,60t+4\}$\\
			$\{0,\frac{15t}{2},20t+1,\frac{83t}{2}+2\}$&$\{0,11,30t+1,66t+5\}$&
			$\{0,11t+1,25t+2,50t+6\}$\\$\{0,14t,21t,36t+1\}$&
			$\{0,16t+1,33t+3,46t+5\}$&$\{0,18t+1,\frac{55t}{2}+2,\frac{109t}{2}+5\}$\\\hline

$h=6$, $x=1$ and $t$ is odd & & \\\hline
			$\{0,1,5t+2,54t+15\}$&$\{0,\frac{3t-3}{2},\frac{19t+3}{2},\frac{83t+17}{2}\}$&
			$\{0,\frac{3t-1}{2},\frac{43t+7}{2},\frac{121t+29}{2}\}$\\
			$\{0,\frac{3t+1}{2},\frac{47t+11}{2},51t+14\}$&
			$\{0,3t-2,8t+1,53t+12\}$&$\{0,3t+1,39t+12,42t+11\}$\\
			$\{0,\frac{7t+1}{2},16t+4,68t+17\}$&$\{0,\frac{9t+3}{2},\frac{85t+25}{2},58t+16\}$&
			$\{0,6t+1,14t+3,51t+15\}$\\
			$\{0,7t,\frac{71t+13}{2},\frac{125t+29}{2}\}$&
			$\{0,7t+1,26t+5,56t+15\}$&$\{0,7t+2,\frac{49t+11}{2},34t+8\}$\\
			$\{0,8t+5,30t+8,43t+12\}$&$\{0,11t,11t+2,15t+2\}$&
			$\{0,11t+4,30t+9,47t+12\}$ \\
			$\{0,12t+4,15t+4,57t+17\}$&
			$\{0,15t+3,26t+6,45t+9\}$&$\{0,17t+4,23t+6,48t+13\}$\\
			$\{0,25t+5,33t+9,36t+6\}$&&\\\hline

$h=6$, $x=1$ and $t$ is even &&\\\hline
			$\{0,1,\frac{19t}{2}+5,39t+10\}$&$\{0,\frac{3t}{2},\frac{3t}{2}+2,\frac{25t}{2}+3\}$&
			$\{0,\frac{3t}{2}+1,11t+4,46t+11\}$\\$\{0,3t,21t+3,36t+7\}$&
			$\{0,4t,49t+12,54t+14\}$&$\{0,6t+2,45t+14,48t+13\}$\\
			$\{0,\frac{15t}{2}+1,11t+2,58t+15\}$&$\{0,8t+1,16t+4,30t+6\}$&
			$\{0,8t+2,23t+5,42t+8\}$\\$\{0,11t,23t+4,68t+17\}$&
			$\{0,13t+4,30t+8,45t+10\}$&$\{0,\frac{31t}{2}+3,\frac{45t}{2}+4,\frac{83t}{2}+9\}$\\
			$\{0,16t+3,21t+4,41t+9\}$&$\{0,17t+3,20t+4,25t+7\}$&
			$\{0,\frac{37t}{2}+3,36t+6,42t+9\}$\\$\{0,19t+4,27t+9,\frac{111t}{2}+15\}$&
			$\{0,19t+6,22t+3,59t+15\}$&$\{0,\frac{49t}{2}+5,29t+6,36t+8\}$\\
			$\{0,27t+7,30t+5,34t+7\}$ &&\\\hline

$h=6$, $x=2$ and $t$ is odd &&\\\hline
			$\{0,1,3t-2,64t+23\}$&$\{0,2,35t+15,\frac{125t+51}{2}\}$&
			$\{0,\frac{3t-3}{2},3t-1,32t+12\}$\\
			$\{0,3t+2,8t+6,19t+9\}$&
			$\{0,3t+3,17t+6,56t+25\}$&$\{0,4t+2,\frac{33t+13}{2},18t+6\}$\\
			$\{0,\frac{9t+5}{2},8t+5,23t+10\}$&
			$\{0,5t+2,16t+6,33t+13\}$&$\{0,5t+3,28t+12,39t+18\}$\\$\{0,7t+2,43t+18,46t+18\}$&
			$\{0,\frac{15t+9}{2},\frac{57t+23}{2},\frac{97t+41}{2}\}$&$\{0,\frac{19t+11}{2},\frac{49t+23}{2},\frac{101t+45}{2}\}$\\
			$\{0,\frac{23t+11}{2},27t+11,34t+12\}$&$\{0,13t+4,20t+8,35t+11\}$&
			$\{0,13t+5,37t+16,43t+19\}$\\$\{0,18t+7,21t+8,66t+28\}$&
			$\{0,18t+8,25t+11,\frac{109t+47}{2}\}$&$\{0,23t+7,30t+12,53t+20\}$\\
			$\{0,25t+12,33t+16,48t+18\}$&$\{0,28t+10,32t+13,47t+17\}$&\\\hline

$h=6$, $x=2$ and $t$ is even &&\\\hline
			$\{0,1,15t+3,23t+7\}$&$\{0,2,3t-1,64t+24\}$&
			$\{0,\frac{3t}{2},\frac{33t}{2}+6,\frac{73t}{2}+15\}$\\$\{0,\frac{3t}{2}+1,\frac{19t}{2}+8,32t+14\}$&
			$\{0,3t-2,22t+7,28t+11\}$&$\{0,3t,33t+12,36t+14\}$\\
			$\{0,3t+1,19t+8,67t+26\}$&$\{0,3t+3,\frac{49t}{2}+11,26t+13\}$&
			$\{0,\frac{7t}{2}+3,\frac{19t}{2}+6,\frac{89t}{2}+20\}$\\$\{0,4t+2,11t+6,49t+22\}$&
			$\{0,4t+3,17t+7,57t+26\}$&$\{0,5t+2,29t+13,56t+24\}$\\
			$\{0,5t+3,23t+9,39t+14\}$&$\{0,7t+1,14t+3,33t+13\}$&
			$\{0,7t+3,20t+8,35t+13\}$\\$\{0,7t+5,28t+12,42t+16\}$&
			$\{0,\frac{15t}{2}+4,\frac{37t}{2}+7,\frac{97t}{2}+20\}$&$\{0,\frac{19t}{2}+7,25t+12,\frac{85t}{2}+18\}$\\
			$\{0,11t+4,29t+11,54t+22\}$&$\{0,\frac{23t}{2}+5,24t+9,\frac{57t}{2}+11\}$&\\\hline

$h=6$, $x=3$ and $t$ is odd &&\\\hline
			$\{0,1,23t+11,64t+33\}$&$\{0,\frac{3t-3}{2},11t+5,11t+7\}$&
			$\{0,\frac{3t-1}{2},\frac{37t+19}{2},\frac{89t+51}{2}\}$\\$\{0,\frac{3t+1}{2},6t+4,30t+17\}$&
			$\{0,3t-3,15t+5,53t+30\}$&$\{0,3t-2,28t+16,36t+19\}$\\
			$\{0,3t+1,16t+8,43t+25\}$&$\{0,4t+3,31t+19,39t+23\}$&
			$\{0,4t+4,19t+10,69t+42\}$\\$\{0,5t+3,23t+13,35t+19\}$&
			$\{0,5t+5,25t+15,49t+30\}$&$\{0,6t+3,\frac{35t+19}{2},\frac{85t+51}{2}\}$\\
			$\{0,7t+2,15t+8,\frac{113t+67}{2}\}$&$\{0,\frac{15t+7}{2},20t+11,\frac{47t+27}{2}\}$&
			$\{0,8t+5,23t+14,61t+36\}$\\$\{0,8t+8,25t+17,41t+26\}$&
			$\{0,\frac{19t+15}{2},26t+17,\frac{101t+65}{2}\}$&$\{0,13t+8,37t+24,40t+23\}$\\
			$\{0,14t+6,21t+9,35t+16\}$&$\{0,15t+7,18t+9,68t+40\}$&
			$\{0,31t+18,36t+22,39t+25\}$\\\hline

$h=6$, $x=3$ and $t$ is even &&\\\hline
			$\{0,1,41t+23,64t+33\}$&$\{0,2,3t-1,22+11\}$& $\{0,\frac{3t}{2},\frac{87t}{2}+26,56t+33\}$\\
			$\{0,\frac{3t}{2}+1,\frac{45t}{2}+10,\frac{121t}{2}+36\}$&$\{0,3t+1,22t+10,35t+18\}$
			&$\{0,3t+2,7t+4,36t+20\}$\\
			$\{0,4t+3,7t+3,68t+38\}$&$\{0,5t+3,19t+10,35t+20\}$&
			$\{0,5t+4,12t+6,20t+10\}$\\
			$\{0,5t+5,19t+11,\frac{47t}{2}+14\}$&
			$\{0,6t+4,24t+13,49t+29\}$&$\{0,6t+5,24t+15,51t+32\}$\\
			$\{0,8t+3,23t+11,35t+19\}$&$\{0,8t+5,23t+12,55t+32\}$&
			$\{0,8t+6,\frac{59t}{2}+16,\frac{141t}{2}+40\}$\\
			$\{0,8t+8,24t+16,41t+25\}$&
			$\{0,11t+5,36t+23,39t+21\}$&$\{0,15t+5,\frac{49t}{2}+15,52t+31\}$\\
			$\{0,\frac{31t}{2}+8,25t+17,\frac{83t}{2}+26\}$& $\{0,23t+14,26t+17,57t+33\}$&
			$\{0,28t+17,\frac{71t}{2}+20,39t+23\}$\\\hline

$h=6$, $x=4$ and $t$ is odd & & \\\hline
			$\{0,1,34t+24,\frac{141t+109}{2}\}$&$\{0,2,37t+32,65t+51\}$&
			$\{0,\frac{3t-3}{2},\frac{87t+63}{2},\frac{119t+85}{2}\}$\\$\{0,3t-3,28t+18,64t+44\}$&
			$\{0,3t-2,42t+29,60t+42\}$&$\{0,3t-1,15t+9,23t+16\}$\\
			$\{0,3t,11t+5,\frac{107t+77}{2}\}$&$\{0,3t+2,11t+8,28t+22\}$&
			$\{0,4t+3,7t+6,55t+41\}$\\$\{0,\frac{9t+7}{2},14t+11,\frac{89t+71}{2}\}$&
			$\{0,5t+4,22t+15,61t+48\}$&$\{0,6t+4,\frac{47t+35}{2},\frac{125t+95}{2}\}$\\
			$\{0,7t+2,26t+19,56t+42\}$&$\{0,7t+4,26t+20,30t+22\}$&
			$\{0,\frac{15t+11}{2},24t+17,30t+20\}$\\$\{0,8t+8,36t+31,39t+32\}$&
			$\{0,\frac{19t+17}{2},13t+11,\frac{49t+39}{2}\}$&$\{0,11t+9,19t+18,55t+42\}$\\
			$\{0,12t+11,39t+29,44t+34\}$&$\{0,13t+12,18t+15,36t+29\}$&
			$\{0,19t+15,37t+31,49t+39\}$\\
			$\{0,20t+16,24t+20,45t+35\}$&&\\\hline

$h=6$, $x=4$ and $t$ is even &&\\\hline
			$\{0,1,34t+23,\frac{125t}{2}+45\}$&$\{0,\frac{3t}{2}+2,18t+13,\frac{137t}{2}+51\}$&
			$\{0,3t-3,23t+14,42t+30\}$\\$\{0,3t-1,28t+18,36t+23\}$&
			$\{0,3t,21t+15,54t+38\}$&$\{0,3t+2,\frac{15t}{2}+5,35t+23\}$\\
			$\{0,4t+2,11t+6,\frac{121t}{2}+46\}$&$\{0,4t+3,18t+14,48t+34\}$&
			$\{0,4t+4,32t+24,69t+56\}$\\$\{0,5t+4,8t+7,25t+20\}$&
			$\{0,6t+4,\frac{37t}{2}+15,\frac{85t}{2}+34\}$&$\{0,6t+5,\frac{47t}{2}+18,\frac{61t}{2}+24\}$\\
			$\{0,7t+5,30t+21,35t+24\}$&$\{0,8t+8,\frac{19t}{2}+8,11t+9\}$&
			$\{0,8t+10,19t+18,55t+43\}$\\$\{0,12t+8,27t+18,57t+43\}$&
			$\{0,12t+10,12t+12,23t+17\}$&$\{0,13t+11,39t+32,55t+42\}$\\
			$\{0,14t+10,31t+24,53t+39\}$&$\{0,15t+9,23t+18,59t+42\}$&
			$\{0,16t+11,28t+22,64t+48\}$\\$\{0,16t+12,23t+15,42t+32\}$&&\\\hline

$h=6$, $x=5$ and $t$ is odd &&\\\hline
			$\{0,1,\frac{3t-1}{2},52t+49\}$&$\{0,2,4t+4,22t+20\}$&
			$\{0,\frac{3t+1}{2},\frac{19t+19}{2},26t+25\}$\\$\{0,3t,42t+38,48t+43\}$&
			$\{0,3t+1,8t+6,19t+16\}$&$\{0,3t+3,27t+24,51t+49\}$\\
			$\{0,\frac{7t+5}{2},11t+7,53t+48\}$&$\{0,\frac{9t+9}{2},\frac{61t+57}{2},\frac{125t+117}{2}\}$&
			$\{0,5t+4,36t+35,39t+34\}$\\$\{0,5t+6,8t+4,69t+64\}$&
			$\{0,6t+4,42t+40,56t+53\}$&$\{0,7t+4,35t+29,60t+53\}$\\
			$\{0,8t+10,29t+26,65t+63\}$&$\{0,8t+11,16t+16,\frac{87t+81}{2}\}$&
			$\{0,11t+8,42t+37,55t+50\}$\\$\{0,12t+10,45t+41,49t+46\}$&
			$\{0,12t+12,30t+27,59t+52\}$&$\{0,12t+14,15t+11,65t+61\}$\\
			$\{0,\frac{25t+27}{2},24t+24,\frac{107t+99}{2}\}$&$\{0,\frac{31t+27}{2},\frac{47t+41}{2},33t+29\}$&
			$\{0,16t+14,31t+28,54t+49\}$\\$\{0,17t+15,28t+26,50t+45\}$&
			$\{0,25t+25,36t+34,40t+37\}$ &\\\hline

$h=6$, $x=5$ and $t$ is even &&\\\hline
			$\{0,\frac{3t}{2},24t+20,30t+25\}$&$\{0,3t-3,3t-1,53t+47\}$&
			$\{0,3t-2,34t+29,64t+55\}$\\$\{0,3t,\frac{15t}{2}+4,\frac{95t}{2}+42\}$&
			$\{0,3t+1,\frac{43t}{2}+17,34t+30\}$&$\{0,3t+2,8t+7,57t+52\}$\\
			$\{0,\frac{7t}{2}+3,\frac{31t}{2}+13,\frac{97t}{2}+45\}$&$\{0,4t+2,30t+27,59t+53\}$&
			$\{0,4t+3,20t+18,36t+34\}$\\$\{0,7t+4,\frac{33t}{2}+15,58t+53\}$&
			$\{0,7t+5,36t+30,60t+53\}$&$\{0,8t+8,\frac{19t}{2}+10,\frac{141t}{2}+65\}$\\
			$\{0,8t+10,12t+14,31t+30\}$&$\{0,11t+7,22t+17,67t+60\}$&
			$\{0,\frac{23t}{2}+10,\frac{59t}{2}+25,47t+40\}$\\$\{0,15t+12,28t+26,35t+29\}$&
			$\{0,16t+14,19t+17,41t+38\}$&$\{0,17t+16,22t+20,61t+58\}$\\
			$\{0,21t+16,36t+29,48t+41\}$&$\{0,22t+19,30t+28,48t+45\}$&
			$\{0,24t+24,35t+30,39t+35\}$\\$\{0,25t+25,33t+29,33t+30\}$&
			$\{0,28t+25,36t+31,42t+37\}$&\\\hline
\end{longtable}
\end{center}


\begin{thebibliography}{99}

\bibitem{ab04}
R.J.R.~Abel and M.~Buratti, Some progress on $(v,4,1)$ difference families and optical orthogonal codes, J. Combin. Theory Ser. A, 106 (2004), 59--75.

\bibitem{ab}
R.J.R.~Abel and M.~Buratti, Difference families, in: CRC Handbook of Combinatorial Designs (C.J.~Colbourn and J.H.~Dinitz, eds), CRC Press, Boca Raton, (2007), 392--410.

\bibitem{bjl}
T.~Beth, D.~Jungnickel, and H.~Lenz, Design Theory, Volume I (Second Edition), Cambridge University Press, Cambridge, UK, 1999.

\bibitem{bjl1}
T.~Beth, D.~Jungnickel, and H.~Lenz, Design Theory, Volume II (Second Edition), Cambridge University Press, Cambridge, UK, 1999.

\bibitem{Bose}
R.C.~Bose, On the construction of balanced incomplete block designs, Ann. Eugenics, 9 (1939), 353--399.


\bibitem{Brouwer}
A.E.~Brouwer, A.~Schrijver, and H.~Hanani, Group divisible designs with block-size four, Discrete Math., 20 (1977), 1--10.

\bibitem{Buratti95}
M.~Buratti, Improving two theorems of Bose on difference families, J. Combin. Des., 3 (1995), 15--24.

\bibitem{Buratti95-1}
M.~Buratti, Constructions for $(q,k,1)$ difference families with $q$ a prime power and $k=4,5$,
Discrete Math., 138 (1995), 169--175.

\bibitem{Buratti98}
M.~Buratti, Recursive constructions for difference matrices and relative difference families, J. Combin.
Des., 6 (1998), 165--182.

\bibitem{MBuratti2}
M.~Buratti, Cyclic designs with block size 4 and related optimal optical orthogonal codes, Des. Codes, Cryptogr., 26 (2002), 111--125.

\bibitem{b18}
M.~Buratti, On silver and golden optical orthogonal codes, The Art of Discrete and Applied Mathematics, 1 (2018), $\#$P2.02.

\bibitem{bp2}
M.~Buratti and A.~Pasotti, Further progress on difference families with block size $4$ or $5$, Des. Codes Cryptogr., 56 (2010), 1--20.


\bibitem{CFM}
Y.~Chang, R.~Fuji-Hara, and Y.~Miao, Combinatorial constructions of optimal optical orthogonal codes with weight $4$, IEEE Trans. Inform. Theory, 49 (2003), 1283--1292.

\bibitem{CM}
Y.~Chang and Y.~Miao, Constructions for optimal optical orthogonal codes, Discrete Math., 261 (2003), 127--139.

\bibitem{ChangYin04}
Y.~Chang and J.~Yin, Further results on optimal optical orthogonal codes with weight $4$, Discrete Math.,  279 (2004), 135--151.

\bibitem{KChen}
K.~Chen, G.~Ge, and L.~Zhu, Starters and related codes, J. Statist. Plann. Inference, 86 (2000), 379--395.

\bibitem{chen45}
K.~Chen and L.~Zhu, Existence of $(q,k,1)$ difference families with $q$ a prime power and $k=4,5$, J. Combin. Des., 7 (1999), 21--30.

\bibitem{csw}
F.R.K.~Chung, J.A.~Salehi, and V.K.~Wei, Optical orthogonal codes: design, analysis and applications, IEEE Trans. Inform. Theory, 35 (1989), 595--604.





\bibitem{Evans02}
A.B.~Evans, On orthogonal orthomorphisms of cyclic and non-abelian groups, Discrete Math., 243 (2002), 229--233.

\bibitem{YMiao}
R.~Fuji-Hara and Y.~Miao, Optical orthogonal codes: their bounds and new optimal constructions, IEEE Trans. Inform. Theory, 46 (2000), 2396--2406.

\bibitem{fmi}
R.~Fuji-Hara, Y.~Miao, and J.~Yin, Optimal $(9v,4,1)$ optical orthogonal codes, SIAM J. Discrete Math., 14 (2001), 256--266.


\bibitem{ge}
G.~Ge, Group divisible designs, in: CRC Handbook of Combinatorial Designs (C.J.~Colbourn and J.H.~Dinitz, eds), CRC Press, Boca Raton, (2007), 255--260.

\bibitem{gsm}
G.~Ge, Y.~Miao, and X.~Sun, Perfect difference families, perfect difference matrices, and related combinatorial structures, J. Combin. Designs, 18 (2010), 415--449.

\bibitem{gy}
G.~Ge and J.~Yin, Constructions for optimal $(v,4,1)$ optical orthogonal codes, IEEE Trans. Inform. Theory, 47 (2001), 2998--3004.

\bibitem{Heffter1896}
L.~Heffter, \"Uber Nachbarconfigurationen, Tripelsysteme und metacyklische Gruppen, Deutsche Mathem. Vereinig. Jahresber., 5 (1896), 67--69.

\bibitem{Heffter1897}
L.~Heffter, \"Uber Tripelsysteme, Math. Ann., 49 (1897), 101--112.

\bibitem{Jimbo1}
M.~Jimbo, A recursive construction of $1$-rotational Steiner $2$-designs, Aequat. Math., 26 (1983), 184--190.



\bibitem{Jungnickel}
D.~Jungnickel, Composition theorems for difference families and regular planes, Discrete Math., 23 (1978), 151--158.

\bibitem{Lan2}
L.~Lan, Y.~Chang, and L.~Wang, Cyclic constant-weight codes: upper bounds and new optimal constructions, IEEE Trans. Inform. Theory, 62 (2016), 6328--6341.

\bibitem{Lan1}
L.~Lan and Y.~Chang, Constructions for optimal cyclic ternary constant-weight codes of weight four and distance six, Discrete Math., 341 (2018), 1010--1020.


\bibitem{Moore}
E.H.~Moore, Tactical memoranda I--III, Amer. J. Math. 18 (1896), 264--303.

\bibitem{Peltesohn}
R.~Peltesohn, Eine L\"{o}sung der beiden Heffterschen Differenzenprobleme, Compositio
Math., 6 (1939), 251--257.

\bibitem{Phelps}
K.T.~Phelps and A.~Rosa, Steiner triple systems with rotational automorphisms, Discrete Math., 33 (1981), 57--66.

\bibitem{Survey}
C.~Reid and A.~Rosa, Steiner systems $S(2,4,v)$ -- a survey, Electron. J. Combin., (2010), \#DS18.


\bibitem{GAP4}
The GAP~Group, GAP -- Groups, Algorithms, and Programming, Version 4.11.1, 2021,   \url{https://www.gap-system.org}.

\bibitem{wc}
X.~Wang and Y.~Chang, The spectrum of $(gv,g,3,\lambda)$-DF in $Z_{gv}$, Science in China (A), 52 (2009), 1004--1016.

\bibitem{Yin1}
J.~Yin, Some combinatorial constructions for optical orthogonal codes, Discrete Math., 185 (1998), 201--219.

\bibitem{Zhang1}
M.~Zhang, T.~Feng, and X.~Wang, The existence of cyclic $(v,4,1)$-designs, Des. Codes, Cryptogr., 90 (2022), 1611--1628.

\bibitem{zfw}
M.~Zhang, T.~Feng, and X.~Wang, A note on a pair of orthogonal orthomorphisms of cyclic groups, Discrete Math., 346 (2023), 113122.

\bibitem{Checking}
C.~Zhao, B.~Zhao, Y.~Chang, T.~Feng, X.~Wang, and M.~Zhang, Cyclic relative difference families with block size four and their applications, 2023, \url{https://doi.org/10.5281/zenodo.8052515}.

\end{thebibliography}
\end{document}